\newtheorem{thm}{Theorem}[section]
\newtheorem{lem}{Lemma}[section]
\newtheorem{prop}{Proposition}[section]
\newtheorem{coro}{Corollary}[section]
\newtheorem{defn}{Definition}[section]{\bf}{\rm}
\newtheorem{assumpt}{Assumption}[section]{\bf}{\rm}
\newtheorem{rem}{Remark}[section]{\itshape}{\rmfamily}
\newenvironment{proof}{\noindent{\it Proof.~~}}{\qed\medskip}
\newenvironment{proofof}[1]{\noindent{\it Proof of #1.~~}}{\qed\bigskip}
\def\eqnarray{\stepcounter{equation}\let\@currentlabel=\theequation
\global\@eqnswtrue
\global\@eqcnt\z@\tabskip\@centering\let\\=\@eqncr
$$\halign to \displaywidth\bgroup\@eqnsel\hskip\@centering
  $\displaystyle\tabskip\z@{##}$&\global\@eqcnt\@ne 
  \hfil$\;{##}\;$\hfil
  &\global\@eqcnt\tw@ $\displaystyle\tabskip\z@{##}$\hfil 
   \tabskip\@centering&\llap{##}\tabskip\z@\cr}
    \renewcommand{\theequation}{%
    \thesection.\arabic{equation}}
\newcommand{\dm}{\displaystyle}
\newcommand{\qed}{\hspace*{\fill}$\Box$}
\newcommand{\vc}{\bm}
\def\svc#1{\mbox{\boldmath $\scriptstyle #1$}}
\def\presub#1{\hspace{0.05em}{}_{(#1)}\hspace{-0.05em}}
\def\ol#1{\overline{#1}}
\def\wt#1{\widetilde{#1}}
\def\wh#1{\widehat{#1}}
\newcommand{\sfBI}{\mathsf{BI}}
\newcommand{\sfBM}{\mathsf{BM}}
\newcommand{\EE}{\mathsf{E}}
\newcommand{\PP}{\mathsf{P}}
\newcommand{\calF}{\mathcal{F}}
\newcommand{\calG}{\mathcal{G}}
\newcommand{\bbB}{\mathbb{B}}
\newcommand{\bbC}{\mathbb{C}}
\newcommand{\bbD}{\mathbb{D}}
\newcommand{\bbF}{\mathbb{F}}
\newcommand{\bbN}{\mathbb{N}}
\newcommand{\bbZ}{\mathbb{Z}}
\newcommand{\rmd}{{\rm d}}
\newcommand{\rme}{{\rm e}}
\newcommand{\dd}[1]{\if#11 1\!\!1 
\else {\if#1C I\!\!\!C
\else {\if#1G I\!\!\!G 
\else {\if#1J J\!\!\!J 
\else {\if#1S S\!\!\!S
\else {\if#1Z Z\!\!\!Z
\else {\if#1Q O\!\!\!\!Q
\else I\!\!#1
\fi} 
\fi}
\fi}
\fi} 
\fi} 
\fi} 
\fi} 
\begin{document}\thispagestyle{plain} 

\hfill

{\Large{\bf
\begin{center}
Continuous-time block-monotone Markov chains and their block-augmented truncations%
\footnote[1]{This paper is published in {\it Linear Algebra and its Applications}, vol.~514, no.~1, pp.~105--150, 2017.}
\end{center}
}
}

\begin{center}
{
Hiroyuki Masuyama%
\footnote[2]{E-mail: masuyama@sys.i.kyoto-u.ac.jp}
}

\medskip

{\small
Department of Systems
Science, Graduate School of Informatics, Kyoto University\\
Kyoto 606-8501, Japan
}

\bigskip
\medskip

{\small
\textbf{Abstract}

\medskip

\begin{tabular}{p{0.85\textwidth}}
This paper considers continuous-time block-monotone Markov chains
(BMMCs) and their block-augmented truncations. We first introduce the
block monotonicity and block-wise dominance relation for
continuous-time Markov chains, and then provide some fundamental
results on the two notions. Using these results, we show that the
stationary distribution vectors obtained by the block-augmented
truncation converge to the stationary distribution vector of the
original BMMC. We also show that the last-column-block-augmented
truncation (LC-block-augmented truncation) provides the best (in a certain sense) approximation to the stationary
distribution vector of a BMMC among all the block-augmented
truncations. Furthermore, we present computable upper bounds for the
total variation distance between the stationary distribution vectors
of a Markov chain and its LC-block-augmented truncation,
under the assumption that the original Markov chain itself may not be
block-monotone but is block-wise dominated by a BMMC with exponential
ergodicity. Finally, we apply the obtained bounds to a queue with a
batch Markovian arrival process and state-dependent departure
rates.
\end{tabular}
}
\end{center}

\begin{center}
\begin{tabular}{p{0.90\textwidth}}
{\small
{\bf Keywords:} %
Block-monotone Markov chain;
Block-augmented truncation;
Total-variation-distance error bound;
GI/G/1-type Markov chain;
Level-dependent QBD (LD-QBD);
Exponential ergodicity
%
%

\medskip

{\bf Mathematics Subject Classification:} %
60J27; 60J22
}
\end{tabular}

\end{center}

\section{Introduction}\label{sec-introduction}

This paper considers continuous-time block-structured Markov chains
characterized by an infinite number of block matrices, such as
GI/G/1-type Markov chains (including M/G/1- and GI/M/1-type ones)
\cite{Bini05,Lato99,Neut89} and level-dependent quasi-birth-and-death
processes (LD-QBDs) \cite{Lato99}.  It is, in general, difficult to
calculate the stationary distribution vectors of such Markov chains. A
simple and practical solution to this problem is to adopt the {\it
  augmented northwest-corner truncations} of the infinitesimal
generators (resp.\ the transition probability matrices) in order to compute
the stationary distribution vectors of continuous-time (resp.\ discrete-time)
Markov chains \cite{Gibs87,Hart12}. Naturally, the stationary
distribution vector obtained by the augmented northwest-corner
truncation is an {\it approximation} to the original stationary
distribution vector.  Therefore, it is important to estimate the error
caused by the augmented northwest-corner truncation.

In fact, such error estimation is facilitated by using the
(stochastic) monotonicity of Markov chains (see, e.g.,
\cite{Dale68}). Indeed, it is shown \cite[Theorem 1]{Gibs87} that the
     {\it last-column-augmented northwest-corner truncation
       (last-column-augmented truncation, for short)} yields the {\it
       best} (in a certain sense) approximation to the stationary
     distribution vector of a discrete-time monotone Markov chain. In
     addition, there have been some studies on the total-variation-distance error bound, i.e., upper bound for the
     total variation distance between the stationary distribution
     vectors of the original Markov chain and its
     last-column-augmented truncation.  Tweedie~\cite{Twee98} assumed
     that the original Markov chain is monotone and geometrically
     ergodic, and then derived a computable total-variation-distance error
     bound. Liu~\cite{LiuYuan10} presented such a bound, assuming the
     monotonicity and polynomial ergodicity of the original Markov
     chain. On the other hand, without the monotonicity, Herv\'{e} and
     Ledoux~\cite{Herv14} developed a total-variation-distance error bound for the stationary
     distribution vector obtained approximately by the
     last-column-augmented truncation of a geometrically ergodic
     Markov chain, though the bound includes the second largest
     eigenvalue of the truncated and augmented transition probability
     matrix. Therefore, Herv\'{e} and Ledoux~\cite{Herv14}'s bound is
     not easy to compute, compared with the bounds presented by
     Tweedie~\cite{Twee98} and Liu~\cite{LiuYuan10}.

We have seen that the monotonicity is useful for the error estimation of
the augmented northwest-corner truncations. However, the monotonicity
is a somewhat strong restriction on block-structured Markov chains.
Thus, Li and Zhao~\cite{LiHai00} introduced the block monotonicity of
discrete-time block-structured Markov chains. The block monotonicity
is an extension of the monotonicity to block-structured Markov
chains. Li and Zhao~\cite{LiHai00} also proved (see Theorem 3.6
therein) that the {\it last-column-block-augmented northwest-corner
  truncation (LC-block-augmented truncation, for short)} yields
the {\it best} approximation to the stationary distribution vector of the
block-monotone Markov chain (BMMC) among all the {\it block-augmented
  northwest-corner truncations (called block-augmented truncations,
  for short)}.  Masuyama~\cite{Masu15-ADV,Masu16-SIAM} presented
computable upper bounds for the total variation distance between the
stationary distribution vectors of the original BMMC and its
LC-block-augmented truncation in the cases where the
original BMMC satisfies the {\it geometric} and {\it subgeometric} drift conditions.  The bounds presented in
\cite{Masu15-ADV,Masu16-SIAM} are the generalization of those in
\cite{Twee98,LiuYuan10}.

The existing results reviewed above are established for discrete-time
BMMCs. These results can be applied to continuous-time Markov chains
with bounded infinitesimal generators by the uniformization technique
\cite[Section 4.5.2]{Tijm03}. As for the continuous-time case, Zeifman
et al.~\cite{Zeif14a} presented an error bound for the periodic
stationary distribution obtained by the truncation of a periodic and
exponentially weakly ergodic non-time-homogeneous birth-and-death
process with bounded transition rates (see also
\cite{Zeif14b,Zeif12}). Hart and Tweedie~\cite{Hart12} provided some
sets of conditions, under which the stationary distribution vectors of
the augmented northwest-corner truncations of a continuous-time
monotone Markov chain converge to the stationary
distribution vector of the original Markov chain.

In this paper, we consider continuous-time block-structured Markov
chains with possibly unbounded infinitesimal generators. We first
provide fundamental results on the block monotonicity and block-wise
dominance relation for continuous-time block-structured Markov
chains. Next, we present the definition of the block-augmented
truncation and LC-block-augmented truncation of continuous-time
block-structured Markov chains. We then prove that the
LC-block-augmented truncation of a BMMC is the {\it best} among all the
block-augmented truncations of the BMMC. We also present computable
total-variation-distance error bounds for the stationary
     distribution vector obtained approximately by the LC-block-augmented truncation of a block-structured Markov chain, under the assumption that the original
Markov chain is block-wise dominated by a BMMC with exponential
ergodicity.  Finally, we apply the obtained bounds to the queue length
process in a queueing model with a batch Markovian arrival process (BMAP)
\cite{Luca91} and state-dependent departure rates.

The rest of this paper is divided into five
sections. Section~\ref{sec-preliminary} introduces basic definitions
and notation. Section~\ref{sec-BMMCs} provides fundamental results
associated with continuous-time BMMCs.  Section~\ref{sec-LBC}
discusses the block-augmented truncations.  Section~\ref{sec-bounds}
presents error bounds for the stationary distribution vector obtained
by the LC-block-augmented truncation. Section~\ref{sec-applications}
applies the error bounds to a queueing model.

\section{Basic definitions and notation}\label{sec-preliminary}

Let $\bbZ_+ = \{0,1,2,\dots\}$, $\bbN = \bbZ_+ \setminus \{0\} =
\{1,2,3,\dots\}$ and $\overline{\bbN} = \bbN \cup \{\infty\}$.
Furthermore, let $\bbZ_+^{\leqslant N} = \{0,1,\dots,N\}$ and
$\bbF^{\leqslant N} =\bbZ_+^{\leqslant N} \times \bbD$ for $N \in
\overline{\bbN}$, where $\bbD=\{1,2,\dots,d\} \subset \bbN$. Note here
that $\bbZ_+^{\leqslant \infty} = \bbZ_+$.  For simplicity, we write
$\bbF$ for $\bbF^{\leqslant \infty}$ and $(k,i;\ell,j)$ for ordered
pair $((k,i),(\ell,j))$.

We define $\vc{I}_d$ as the $d \times d$ identity matrix. We may write
$\vc{I}$ for the identity matrix when its order is clear from the
context. We also define $\vc{O}$ as the zero matrix. Furthermore, let
$\vc{T}_d:=\vc{T}_d^{\leqslant \infty}$ denote
\[
\vc{T}_d
= \left(
\begin{array}{ccccc}
\vc{I}_d & \vc{O} & \vc{O} & \vc{O} & \cdots
\\
\vc{I}_d & \vc{I}_d & \vc{O} & \vc{O} & \cdots
\\
\vc{I}_d & \vc{I}_d & \vc{I}_d & \vc{O} & \cdots
\\
\vc{I}_d & \vc{I}_d & \vc{I}_d & \vc{I}_d & \cdots
\\
\vdots      & \vdots      & \vdots      & \vdots      & \ddots
\end{array}
\right),
\]
and $\vc{T}_d^{\leqslant N}$, $N \in \bbZ_+$, denote the
$|\bbF^{\leqslant N}| \times |\bbF^{\leqslant N}|$ northwest-corner
truncation of $\vc{T}_d$, where $|\,\cdot\,|$ represents the
cardinality of the set between the vertical bars. It is easy to see that
\[
\vc{T}_d^{-1}
= \left(
\begin{array}{ccccc}
\vc{I}_d & \vc{O} & \vc{O} & \vc{O} & \cdots
\\
-\vc{I}_d & \vc{I}_d & \vc{O} & \vc{O} & \cdots
\\
\vc{O} & -\vc{I}_d & \vc{I}_d & \vc{O} & \cdots
\\
\vc{O} & \vc{O} & -\vc{I}_d & \vc{I}_d & \cdots
\\
\vdots      & \vdots      & \vdots      & \vdots      & \ddots
\end{array}
\right),
\]
and that $(\vc{T}_d^{\leqslant N})^{-1}$ is equal to the
$|\bbF^{\leqslant N}| \times |\bbF^{\leqslant N}|$ northwest-corner
truncation of $\vc{T}_d^{-1}$.

We now introduce the block monotonicity and block-wise dominance
relation for probability vectors and stochastic matrices, and provide
the definition of block-increasing column vectors. To this end, we
suppose $N \in \overline{\bbN}$. We then define
$\vc{\mu}=(\mu(k,i))_{(k,i)\in\bbF^{\leqslant N}}$ and
$\vc{\eta}=(\eta(k,i))_{(k,i)\in\bbF^{\leqslant N}}$ as arbitrary
probability vectors with block size $d$. We also define
$\vc{P}=(p(k,i;\ell,j))_{(k,i),(\ell,j)\in\bbF^{\leqslant N}}$ and
$\wt{\vc{P}}=(\wt{p}(k,i;\ell,j))_{(k,i),(\ell,j)\in\bbF^{\leqslant
    N}}$ as arbitrary stochastic matrices with block size $d$.
\begin{defn}\label{defn-block-wise-dominance}
The probability vector $\vc{\mu}$ is said to be block-wise dominated
by the probability vector $\vc{\eta}$ (denoted by $\vc{\mu} \prec_d
\vc{\eta}$) if $\vc{\mu}\vc{T}_d^{\leqslant N} \le
\vc{\eta}\vc{T}_d^{\leqslant N}$.
\end{defn}
\begin{defn}[Definition 1.1 and Proposition 2.1, \cite{Masu15-ADV}]\label{defn-BM}
The stochastic matrix $\vc{P}$ and Markov chains characterized by
$\vc{P}$ are said to be block-monotone with block size $d$ if
$(\vc{T}_d^{\leqslant N})^{-1}\vc{P}\vc{T}_d^{\leqslant N} \ge
\vc{O}$, or equivalently, if
\[
\sum_{m=\ell}^N p(k,i;m,j)
\le \sum_{m=\ell}^N p(k+1,i;m,j),\quad (k,i) \in \bbF^{\leqslant
  N-1},\ (\ell,j) \in \bbF^{\leqslant N}.
\]
The set of block-monotone stochastic matrices with block size
$d$ is denoted by $\sfBM_d$.
\end{defn}
\begin{defn}\label{defn-block-wise-domination}
The stochastic matrix $\vc{P}$ is said to be block-wise dominated by
the stochastic matrix $\wt{\vc{P}}$ (denoted by $\vc{P} \prec_d
\wt{\vc{P}}$) if $\vc{P}\vc{T}_d^{\leqslant N} \le
\wt{\vc{P}}\vc{T}_d^{\leqslant N}$.
\end{defn}
\begin{defn}[Definition 2.1, \cite{LiHai00}]
A column vector $\vc{f}=(f(k,i))_{(k,i)\in\bbF^{\leqslant N}}$ is said
to be block-increasing with block size $d$ if $(\vc{T}_d^{\leqslant
  N})^{-1}\vc{f} \ge \vc{0}$, i.e., $f(k,i) \le f(k+1,i)$ for all
$(k,i) \in \bbZ_+^{\leqslant N-1} \times \bbD$. The set of column
vectors block-increasing with block size $d$ by $\sfBI_d$.
\end{defn}

Finally, we present a basic result on block-monotone stochastic
matrices.
\begin{prop}[Proposition~2.2, \cite{Masu15-ADV}]\label{prop-2}
The following are equivalent:
\begin{enumerate}
\item $\vc{P} \in \sfBM_d$;
\item $\vc{\mu}\vc{P} \prec_d \vc{\eta}\vc{P}$ for any two probability
  vectors $\vc{\mu}$ and $\vc{\eta}$ such that $\vc{\mu} \prec_d
  \vc{\eta}$; and
\item $\vc{P}\vc{f} \in \sfBI_d$ for any $\vc{f} \in \sfBI_d$.
\end{enumerate}
\end{prop}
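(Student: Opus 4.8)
The plan is to prove the two equivalences $(1)\Leftrightarrow(2)$ and $(1)\Leftrightarrow(3)$ separately, since condition (2) is a statement about row (probability) vectors while (3) is a statement about column (block-increasing) vectors. The unifying observation is that all three conditions assert the nonnegativity of objects built from $\vc{P}$ and $\vc{T}_d^{\leqslant N}$, so each implication will reduce to inserting the identity $\vc{T}_d^{\leqslant N}(\vc{T}_d^{\leqslant N})^{-1}=\vc{I}$ at a suitable spot and then using that a product of two entrywise-nonnegative objects is again nonnegative.

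For $(1)\Rightarrow(2)$: given $\vc{\mu}\prec_d\vc{\eta}$, set $\vc{x}=(\vc{\eta}-\vc{\mu})\vc{T}_d^{\leqslant N}\ge\vc{0}$ and $\vc{M}=(\vc{T}_d^{\leqslant N})^{-1}\vc{P}\vc{T}_d^{\leqslant N}\ge\vc{O}$, the latter being exactly (1). Then $(\vc{\eta}-\vc{\mu})\vc{P}\vc{T}_d^{\leqslant N}=\vc{x}\vc{M}\ge\vc{0}$, which is precisely $\vc{\mu}\vc{P}\prec_d\vc{\eta}\vc{P}$. For the converse $(2)\Rightarrow(1)$, I would feed (2) the degenerate probability vectors $\vc{\mu}=\vc{e}_{(k,i)}$ and $\vc{\eta}=\vc{e}_{(k+1,i)}$, where $\vc{e}_{(k,i)}$ is the unit row vector with a single $1$ in position $(k,i)$. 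From the row structure of $\vc{T}_d^{\leqslant N}$ one checks directly that $\vc{e}_{(k,i)}\prec_d\vc{e}_{(k+1,i)}$, so (2) gives $\vc{e}_{(k,i)}\vc{P}\vc{T}_d^{\leqslant N}\le\vc{e}_{(k+1,i)}\vc{P}\vc{T}_d^{\leqslant N}$; reading this off at coordinate $(\ell,j)$ yields $\sum_{m=\ell}^N p(k,i;m,j)\le\sum_{m=\ell}^N p(k+1,i;m,j)$, which is the defining inequality of $\sfBM_d$.

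The column equivalence is the transpose of this argument. For $(1)\Rightarrow(3)$, given $\vc{f}\in\sfBI_d$ put $\vc{y}=(\vc{T}_d^{\leqslant N})^{-1}\vc{f}\ge\vc{0}$, so $(\vc{T}_d^{\leqslant N})^{-1}\vc{P}\vc{f}=\vc{M}\vc{y}\ge\vc{0}$, i.e. $\vc{P}\vc{f}\in\sfBI_d$. For $(3)\Rightarrow(1)$, I would apply (3) to the columns $\vc{t}_{(\ell,j)}$ of $\vc{T}_d^{\leqslant N}$: since $(\vc{T}_d^{\leqslant N})^{-1}\vc{t}_{(\ell,j)}=\vc{e}_{(\ell,j)}\ge\vc{0}$ (the corresponding unit column), each $\vc{t}_{(\ell,j)}$ lies in $\sfBI_d$, so $\vc{P}\vc{t}_{(\ell,j)}\in\sfBI_d$ means the $(\ell,j)$-th column of $\vc{M}=(\vc{T}_d^{\leqslant N})^{-1}\vc{P}\vc{T}_d^{\leqslant N}$ is nonnegative; ranging over all $(\ell,j)$ gives $\vc{M}\ge\vc{O}$, which is (1).

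The main obstacle is the bookkeeping in the untruncated case $N=\infty$. The identity insertions above rely on $\vc{T}_d(\vc{T}_d)^{-1}=\vc{I}$, which is unproblematic because the band structure of $\vc{T}_d^{-1}$ makes that product locally finite; the genuinely delicate point is the associativity used to regroup the infinite products, namely $(\vc{\eta}-\vc{\mu})\vc{P}\vc{T}_d=[(\vc{\eta}-\vc{\mu})\vc{T}_d]\,[\vc{T}_d^{-1}\vc{P}\vc{T}_d]$ and $\vc{T}_d^{-1}\vc{P}\vc{f}=[\vc{T}_d^{-1}\vc{P}\vc{T}_d]\,[\vc{T}_d^{-1}\vc{f}]$. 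I would justify these by a Fubini/Tonelli argument exploiting that $\vc{P}$ is stochastic: the vector $\vc{\eta}-\vc{\mu}$ is absolutely summable and the inner partial sums $\sum_{m\ge\ell}p(k,i;m,j)$ are uniformly bounded by $1$, so the relevant doubly-infinite sums converge absolutely and may be rearranged freely. The finite-$N$ case requires no such care, since all matrices and vectors are then finite and $\vc{T}_d^{\leqslant N}$ is genuinely invertible.
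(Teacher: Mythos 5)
The paper states this proposition without proof, citing it as Proposition~2.2 of \cite{Masu15-ADV}, so there is no in-paper argument to compare against. Your proof is correct and is the natural conjugation argument: each forward implication is the nonnegativity of a product after inserting $\vc{T}_d^{\leqslant N}(\vc{T}_d^{\leqslant N})^{-1}$, the converse directions use exactly the right test objects ($\vc{e}_{(k,i)}\prec_d\vc{e}_{(k+1,i)}$ for (b), the columns of $\vc{T}_d^{\leqslant N}$ for (c)), and your Fubini/Tonelli justification of the regrouping when $N=\infty$ --- resting on $\sum_{(k,i)}|\eta(k,i)-\mu(k,i)|\le 2$, the bound $\sum_{m\ge\ell}p(k,i;m,j)\le 1$, and the nonnegativity of $(\vc{T}_d)^{-1}\vc{f}$ --- supplies precisely the care that the infinite case requires.
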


\section{Block-monotone continuous-time Markov chains}\label{sec-BMMCs}

In this section, we first provide the basic assumption and
characterization of a continuous-time block-structured Markov chain.
We then describe the block monotonicity and block-wise dominance
relation for the infinitesimal generators of continuous-time
block-structured Markov chains. We also present some fundamental
results on the block monotonicity and block-wise dominance relation.

\subsection{Block-structured Markov chains}\label{subsec-BSMC}

Let $\{(X_t,J_t);t\ge0\}$ denote a continuous-time Markov chain with
state space $\bbF^{\leqslant N}$, where $N \in \overline{\bbN}$. Let
$\vc{P}^{(t)}=(p^{(t)}(k,i;\ell,j))_{(k,i),(\ell,j)\in\bbF^{\leqslant N}}$
denote the {\it transition matrix function} of $\{(X_t,J_t);t\ge0\}$,
i.e.,
\begin{equation}
p^{(t)}(k,i;\ell,j) =
\PP(X_t=\ell,J_t=j \mid X_0=k,J_0=i),\quad t \ge 0.
\label{defn-p^{(t)}(k,i;l,j)}
\end{equation}
It is known that
\begin{equation}
\vc{P}^{(t+s)} = \vc{P}^{(t)}\vc{P}^{(s)} = \vc{P}^{(s)}\vc{P}^{(t)},
\qquad t,s \ge 0,
\label{CK-EQ}
\end{equation}
which is called the Chapman-Kolmogorov equation \cite[Chapter 8,
  Section 2.1]{Brem99}.

We assume that $\vc{P}^{(t)}$ is {\it continuous (or standard)}, i.e.,
$\lim_{t\downarrow0}\vc{P}^{(t)} = \vc{I}$ (see \cite[Chapter 8,
  Section 2.2]{Brem99} and \cite[Definition at p.~5]{Ande91}). It then
follows from \cite[Section 1.2, Proposition 2.2]{Ande91} that, for all
$(k,i) \in \bbF^{\leqslant N}$, $q_{(k,i)} := \lim_{t\downarrow0}(1 -
p^{(t)}(k,i;k,i))/t \ge 0$ exists. Although $q_{(k,i)}$ is possibly
infinite, we assume in what follows that
\[
q_{(k,i)}< \infty
\quad \mbox{for all $(k,i) \in \bbF^{\leqslant N}$},
\]
that is, all the states in the state space $\bbF^{\leqslant N}$ are
{\it stable} \cite[Definition at p.~9]{Ande91}. Thus, it follows from
\cite[Section 1.2, Proposition 2.4 and Corollary 2.5]{Ande91} that
$\vc{P}^{(t)}$ satisfies the Kolmogorov forward differential equation
(\ref{eqn-FKM-DEQ}) and the backward differential equation
(\ref{eqn-BKM-DEQ}):
\begin{align}
{\rmd \over \rmd t}\vc{P}^{(t)} &= \vc{P}^{(t)}\vc{Q}, 
\label{eqn-FKM-DEQ}
\\
{\rmd \over \rmd t}\vc{P}^{(t)} &= \vc{Q} \vc{P}^{(t)}, 
\label{eqn-BKM-DEQ}
\end{align}
where $\vc{Q}:=(q(k,i;\ell,j))_{(k,i),(\ell,j)\in\bbF^{\leqslant N}}$ is a
matrix whose elements are all finite, which is given by
\begin{equation}
\vc{Q} = \lim_{t\downarrow0}{\vc{P}^{(t)} - \vc{I} \over t}.
\label{defn-Q}
\end{equation}
Note here that $q_{(k,i)}=|Q(k,i;k,i)| < \infty$ for all
$(k,i)\in\bbF^{\leqslant N}$.  The matrix $\vc{Q}$ is call the {\it
  infinitesimal generator} \cite[Chapter 8, Definition 2.3]{Brem99} of the Markov
chain $\{(X_t,J_t);t\ge0\}$ and the transition matrix function
$\vc{P}^{(t)}$. In general, the infinitesimal generator is a diagonally dominant matrix with nonpositive diagonal and nonnegative off-diagonal elements (see \cite[Section~1.2, Propositions~2.2 and 2.6]{Ande91}). Such a matrix is referred to as 
the {\it $q$-matrix} \cite[Definitions at p.~13 and p.~64]{Ande91}.

For further discussion, we assume that $\vc{Q}$ is
{\it conservative} \cite[Definition at p.~13]{Ande91}, i.e.,
\begin{equation}
\vc{Q}\vc{e} = \vc{0},
\label{eqn-Qe=0}
\end{equation}
where $\vc{e}$ is a column vector of 1's. 

In the rest of this paper, we proceed under Assumption~\ref{assumpt-basic-zero} below, which is a summary of the assumptions made above.
\begin{assumpt}\label{assumpt-basic-zero}
(i)  $\vc{P}^{(t)}$  is continuous; and
(ii) $\vc{Q}$ is stable and conservative.
\end{assumpt}

We now mention an important notion for infinitesimal generators (or
$q$-matrices).  The infinitesimal generator $\vc{Q}$
is said to be {\it regular (or non-explosive)} if the equation
\begin{equation}
\vc{Q}\vc{x} = \gamma\vc{x}, \quad \vc{0} \le \vc{x} \le \vc{e}
\label{eqn-Q-regular}
\end{equation}
has no nontrivial solution for some (and thus all) $\gamma > 0$ (see
\cite[Chapter 8, Theorem 4.4]{Brem99} and \cite[Section 2.2, Theorem
  2.7]{Ande91}). In fact, $\vc{Q}$ is regular if and only if
$\{(X_t,J_t);t\ge0\}$ is a regular-jump process \cite[Chapter 8,
  Definition 2.5]{Brem99}. Furthermore, if $\vc{Q}$ is regular, then
Assumption~\ref{assumpt-basic-zero} holds \cite[Chapter 8, Definition
  2.4 and Theorem 3.4]{Brem99} and thus $\vc{P}^{(t)}\vc{e}=\vc{e}$
for all $t \ge 0$ and $\{\vc{P}^{(t)}\}$ is the unique solution of
both equations (\ref{eqn-FKM-DEQ}) and (\ref{eqn-BKM-DEQ})
\cite[Corollary 2.5 and Theorems 2.2 and 2.7 of Section 2.2 and
  Definition at p.~81]{Ande91}.

\begin{rem}\label{rem-unifomization}
If $\vc{Q}$ is bounded \cite[Section 4.5.2]{Tijm03}, then
\begin{equation*}
\vc{P}^{(t)}= \sum_{m=0}^{\infty}{(\vc{Q} t)^m \over m!}
=\exp\{\vc{Q} t\}.
\end{equation*}
\end{rem}

Finally, we introduce the definition of a stationary distribution
vector (or stationary distribution) of the Markov chain
$\{(X_t,J_t)\}$.

\begin{defn}\label{defn-pi}
Let $\vc{\pi}=(\pi(k,i))_{(k,i) \in \bbF^{\leqslant N}}$ denote a
probability vector such that
\[
\vc{\pi} \vc{P}^{(t)} = \vc{\pi}\quad
\mbox{for all $t \ge 0$}.
\]
The vector $\vc{\pi}$ is called a {\it stationary distribution vector
  (or stationary distribution)} of the Markov chain $\{(X_t,J_t)\}$
and the transition matrix function $\vc{P}^{(t)}$ (see
\cite[Definition at pp.~159--160]{Ande91}).
\end{defn}

\begin{rem}\label{rem-pi}
Suppose that the Markov chain $\{(X_t,J_t)\}$ and thus $\vc{Q}$ are
irreducible. It then holds that $\{(X_t,J_t)\}$ and $\vc{Q}$ are
positive recurrent if and only if there exists a stationary
distribution vector of $\{(X_t,J_t)\}$ and $\vc{P}^{(t)}$ \cite[Section
  5.1, Proposition~1.7]{Ande91}.  Furthermore, it is known
\cite[Section 5.4, Theorem~4.5]{Ande91} that if $\{(X_t,J_t)\}$ is
ergodic (i.e., irreducible and positive recurrent) then its stationary
distribution vector $\vc{\pi}$ satisfies
\begin{eqnarray}
\vc{\pi}\vc{Q} &=& \vc{0},
\label{eqn-piQ=0}
\\
\lim_{t\to\infty}\vc{P}^{(t)} &=& \vc{e}\vc{\pi},
\label{lim-P^{(t)}}
\end{eqnarray}
and (\ref{lim-P^{(t)}}) implies that $\vc{\pi}$ is the unique
stationary distribution vector.
\end{rem}

\begin{rem}\label{rem-regular}
If $\vc{Q}$ is ergodic, then $\vc{Q}$ is regular. Indeed, suppose that
$\vc{Q}$ is ergodic but is not regular. Under this assumption, the
equation (\ref{eqn-Q-regular}) has a nontrivial solution $\vc{x} \ge
\vc{0},\neq \vc{0}$ for some $\gamma > 0$. Pre-multiplying
(\ref{eqn-Q-regular}) by $\vc{\pi}$ and using (\ref{eqn-piQ=0}), we
have $0 = \gamma (\vc{\pi}\vc{x}) > 0$, which yields a
contradiction. Consequently, the ergodicity of $\vc{Q}$ implies that
$\vc{Q}$ is regular.
\end{rem}

\subsection{Block monotonicity and block-wise dominance for infinitesimal generators}

In this subsection, we present the fundamental results on the block
monotonicity and block-wise dominance relation for $|\bbF^{\leqslant
  N}| \times |\bbF^{\leqslant N}|$ infinitesimal generators, where $N \in \overline{\bbN}$.  To this end, we
introduce another Markov chain $\{(\wt{X}_t,\wt{J}_t);t\ge0\}$ with
state space $\bbF^{\leqslant N}$ and infinitesimal generator
$\wt{\vc{Q}}:=(\wt{q}(k,i;\ell,j))_{(k,i),(\ell,j)\in\bbF^{\leqslant
    N}}$. We then define
$\wt{\vc{P}}^{(t)}:=(\wt{p}^{(t)}(k,i;\ell,j))_{(k,i),(\ell,j)\in\bbF^{\leqslant
    N}}$ as the transition matrix function of the Markov chain
$\{(\wt{X}_t,\wt{J}_t)\}$. By definition,
\[
\wt{p}^{(t)}(k,i;\ell,j)
= \PP(\wt{X}_t = \ell, \wt{J}_t = j 
\mid \wt{X}_0 = k, \wt{J}_0 = i),
\]
for $t \ge 0$ and $(k,i;\ell,j) \in \bbF^{\leqslant N} \times
\bbF^{\leqslant N}$.  In addition, we define $\bbB^{\leqslant N}$, $N
\in \ol{\bbN}$, as
\[
\bbB^{\leqslant N}
= (\bbF^{\leqslant N} \times \bbF^{\leqslant N}) 
\setminus \{(k,i;k,i); (k,i)\in \bbF^{\leqslant N}\},
\]
and write $\bbB$ for $\bbB^{\leqslant \infty}$.

We now provide the definition of the
block monotonicity and block-wise dominance. 
\begin{defn}\label{defn-Block-Monotonicity}
The infinitesimal generator $\vc{Q}$ and Markov chains characterized
by $\vc{Q}$ are said to be block-monotone with block
size $d$ (denoted by $\vc{Q} \in \sfBM_d$) if all the off-diagonal elements of
$(\vc{T}_d^{\leqslant N})^{-1} \vc{Q} \vc{T}_d^{\leqslant N}$ are nonnegative, i.e.,
\[
\sum_{m=\ell}^N q(k-1,i;m,j) \le \sum_{m=\ell}^N q(k,i;m,j),
\quad \mbox{$(k,i;\ell,j) \in \bbB^{\leqslant N}$ with $k \in \bbN$}.
\]
\end{defn}
\begin{defn}\label{defn-dominance}
The infinitesimal generator $\vc{Q}$ is said to be
block-wise dominated by $\wt{\vc{Q}}$ (denoted by $\vc{Q} \prec_d
\wt{\vc{Q}}$) if $\vc{Q}\vc{T}_d^{\leqslant N} \le
\wt{\vc{Q}}\vc{T}_d^{\leqslant N}$.
\end{defn}

In what follows, we present five lemmas:
Lemmas~\ref{prop-xi(i,j)}--\ref{lem-Q-4}. For the respective lemmas,
we give the proofs in the case where $N = \infty$ only, which can be
applied to the case where $N < \infty$, with minor modifications.
\begin{lem}\label{prop-xi(i,j)}
If $\vc{Q} \in \sfBM_d$, then (a) $\xi(i,j) := \sum_{\ell\in\bbZ_+^{\leqslant
    N}}q(k,i;\ell,j)$ is constant with respect to $k \in
\bbZ_+^{\leqslant N}$; and (b) $\{J_t;t\ge0\}$ is a Markov chain with state space $\bbD$ and infinitesimal generator $\vc{\Xi} = (\xi(i,j))_{i,j\in\bbD}$.
\end{lem}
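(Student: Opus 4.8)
The plan is to prove (a) first and then deduce (b) from it by a lumpability argument.

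For (a), write $\xi_k(i,j) = \sum_{m \in \bbZ_+^{\leqslant N}} q(k,i;m,j)$, so that the claim is that $\xi_k(i,j)$ does not depend on $k$. First I would observe that the block-monotonicity inequality of Definition~\ref{defn-Block-Monotonicity}, specialized to $\ell = 0$ (which is admissible since $(k,i;0,j) \in \bbB^{\leqslant N}$ whenever $k \in \bbN$), reads exactly $\xi_{k-1}(i,j) \le \xi_k(i,j)$; hence for each fixed $i,j$ the sequence $\{\xi_k(i,j)\}_k$ is nondecreasing. Next I would bring in conservativeness: summing $\vc{Q}\vc{e} = \vc{0}$ over the target phase gives $\sum_{j\in\bbD}\xi_k(i,j) = \sum_{(m,j)} q(k,i;m,j) = 0$ for every $k$ and $i$. (Each $\xi_k(i,j)$ is finite because stability and conservativeness force the total off-diagonal rate out of $(k,i)$ to equal $q_{(k,i)} < \infty$.)

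Then the two facts combine cleanly: for $k \in \bbN$ we have $\xi_k(i,j) - \xi_{k-1}(i,j) \ge 0$ for every $j$, yet $\sum_{j}[\xi_k(i,j) - \xi_{k-1}(i,j)] = 0 - 0 = 0$; a finite sum of nonnegative terms that vanishes must be termwise zero, so $\xi_k(i,j) = \xi_{k-1}(i,j)$ for all $j$. Induction on $k$ yields constancy in $k$, which is (a), and we write $\xi(i,j)$ for the common value.

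For (b), I would regard $\{J_t\}$ as the image of $\{(X_t,J_t)\}$ under the lumping $(k,i) \mapsto i$. Part (a) says precisely that the total transition rate from any state $(k,i)$ into the phase-class $\{(\ell,j) : \ell \in \bbZ_+^{\leqslant N}\}$ equals $\xi(i,j)$, independently of the level $k$; this is exactly the strong-lumpability condition. To turn it into the Markov property, set $g^{(t)}(k,i;j) = \sum_{\ell} p^{(t)}(k,i;\ell,j)$ and use the Markov property of the bivariate chain to write $\PP(J_{t+s}=j \mid (X_u,J_u)_{u\le s}) = g^{(t)}(X_s,J_s;j)$; it then suffices to show that $g^{(t)}(k,i;j)$ does not depend on $k$, for then this conditional probability is a function of $J_s$ alone and $\{J_t\}$ is Markov with transition function $(g^{(t)}(i;j))_{i,j\in\bbD}$.

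Finally, to identify the generator I would invoke the backward equation~(\ref{eqn-BKM-DEQ}): summing it over the target level shows that $g^{(t)}$ solves $\frac{\rmd}{\rmd t} g^{(t)}(k,i;j) = \sum_{(m,n)} q(k,i;m,n)\, g^{(t)}(m,n;j)$ with $g^{(0)}(k,i;j) = \delta_{ij}$. Using part (a) in the form $\sum_m q(k,i;m,n) = \xi(i,n)$, one checks that the level-independent candidate $[\exp\{\vc{\Xi}t\}]_{ij}$ satisfies the same system and the same (level-independent) initial condition; since $\vc{\Xi}$ is a finite, hence bounded and regular, conservative $q$-matrix on $\bbD$, this candidate is the unique such solution, giving simultaneously the level-independence of $g^{(t)}$ and the identification of $\vc{\Xi}$ as the generator of $\{J_t\}$. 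I expect the main obstacle to be precisely this last step: justifying the level-independence and the uniqueness rigorously when $\vc{Q}$ may be unbounded, so that the termwise differentiation and the interchange of summation in the backward equation, together with uniqueness of its solution, must be handled with care, whereas the arguments behind (a) are essentially algebraic.
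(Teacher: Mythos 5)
Your proof of part (a) is correct and is essentially the paper's argument: specialize the block-monotonicity inequality to $\ell=0$ to get monotonicity of $\xi_k(i,j)$ in $k$, then use conservativeness to force the nonnegative increments to sum to zero over $j$, hence vanish termwise.

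Part (b), however, has a genuine gap at exactly the point you flag. Summing the \emph{backward} equation over the target level gives
\begin{equation*}
\frac{\rmd}{\rmd t}\, g^{(t)}(k,i;j) \;=\; \sum_{(m,n)} q(k,i;m,n)\, g^{(t)}(m,n;j),
\end{equation*}
which is an \emph{infinite-dimensional} linear system indexed by $(k,i)$; it does not close in the phase variable unless you already know that $g^{(t)}(m,n;j)$ is independent of $m$, which is what you are trying to prove. To conclude that $g^{(t)}$ coincides with the level-independent candidate $[\exp\{\vc{\Xi}t\}]_{i,j}$ you therefore need uniqueness of (bounded) solutions of the backward equation for $\vc{Q}$ itself --- not for $\vc{\Xi}$. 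The regularity of the finite matrix $\vc{\Xi}$ is irrelevant here, and uniqueness for the $\vc{Q}$-backward system is precisely what fails when $\vc{Q}$ is not regular; the lemma assumes only Assumption~\ref{assumpt-basic-zero} (stability and conservativeness), not regularity, so this tool is unavailable. The paper sidesteps the issue by summing the \emph{forward} equation (\ref{eqn-FKM-DEQ}) over the target level instead: since $\sum_{\ell} q(\ell',j';\ell,j) = \xi(j',j)$ by part (a), the forward equation collapses, for each fixed $(k,i)$, to the closed $d$-dimensional constant-coefficient system $\frac{\rmd}{\rmd t} p_k^{(t)}(i,j) = \sum_{j'} p_k^{(t)}(i,j')\,\xi(j',j)$ with initial condition $\delta_{ij}$, whose uniqueness is elementary and immediately yields both the level-independence and the identification $p_k^{(t)}(i,j) = [\exp\{\vc{\Xi}t\}]_{i,j}$. (The interchanges of sum, limit and derivative that this requires are justified in the paper by the uniform bound $\sum_{(\ell,j)} |p^{(t+s)}(k,i;\ell,j)-p^{(t)}(k,i;\ell,j)|/s \le 2|q(k,i;k,i)|$ and dominated convergence --- a step your sketch would also need.) If you replace your backward-equation/uniqueness step by this forward-equation reduction, the rest of your lumpability framing goes through.
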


\begin{proof}
We first prove statement (a). Since $\vc{Q} \in \sfBM_d$ (see
Definition~\ref{defn-Block-Monotonicity}), we have
\[
\sum_{\ell=0}^{\infty} q(k,i;\ell,j)
\le \sum_{\ell=0}^{\infty} q(k+1,i;\ell,j)
\quad \mbox{for all $(k,i) \in \bbF$ and $j\in\bbD$}.
\]
Combining this and (\ref{eqn-Qe=0}) yields
\[
0 = \sum_{(\ell,j)\in\bbF} q(k,i;\ell,j)
\le \sum_{(\ell,j)\in\bbF} q(k+1,i;\ell,j) = 0
\quad \mbox{for all $(k,i) \in \bbF$},
\]
which implies that, for each $(i,j) \in \bbD^2$,
$\sum_{\ell=0}^{\infty}q(k,i;\ell,j)$ is constant with respect to $k
\in \bbZ_+$, i.e.,
\begin{equation}
\xi(i,j)=\sum_{\ell=0}^{\infty}q(k,i;\ell,j)\quad 
\mbox{for all $(k,i) \in \bbF$ and $j\in\bbD$}.
\label{eqn-xi(i,j)}
\end{equation}
Therefore, statement (a) holds.

Next, we prove statement (b). Let $p_k^{(t)}(i,j) = \PP(J_t = j \mid
X_0 = k, J_0=i)$ for $k \in \bbZ_+$ and $i,j\in\bbD$. It then follows
from (\ref{defn-p^{(t)}(k,i;l,j)}) that
\begin{eqnarray}
p_k^{(t)}(i,j)
&=& \sum_{\ell=0}^{\infty}
p^{(t)}(k,i;\ell,j), \qquad k \in \bbZ_+,\ i,j\in\bbD.
\label{defn-p_k^{(t)}(i,j)}
\end{eqnarray}
Since $\vc{P}^{(t+s)} = \vc{P}^{(s)}\vc{P}^{(t)}$ (see (\ref{CK-EQ}))
and $\vc{P}^{(t)}\vc{e} \le \vc{e}$, we have
\begin{eqnarray}
\lefteqn{
\sum_{(\ell,j)\in\bbF}{ | p^{(t+s)}(k,i;\ell,j) - p^{(t)}(k,i;\ell,j) | \over s}
}
\nonumber
\\
&\le& {1 - p^{(s)}(k,i;k,i) \over s }p^{(t)}(k,i;\ell,j) 
+  \sum_{(\ell',j') \in \bbF \setminus \{(k,i)\}}
{p^{(s)}(k,i;\ell',j') \over s }p^{(t)}(\ell',j';\ell,j)
\nonumber
\\
&\le& {1 - p^{(s)}(k,i;k,i) \over s }
+  \sum_{(\ell',j') \in \bbF \setminus \{(k,i)\}}
{p^{(s)}(k,i;\ell',j') \over s }
\nonumber
\\
&\le& {2\{1 - p^{(s)}(k,i;k,i)\} \over s}
\le 2\,|q(k,i;k,i)|,
\qquad t \ge0,\ s > 0,\ (k,i) \in \bbF,
\label{add-eqn-151108-01}
\end{eqnarray}
where the last inequality holds due to
\cite[Theorem~II.3.1]{Chun67}. It also follows from
(\ref{defn-p_k^{(t)}(i,j)}) and (\ref{add-eqn-151108-01}) that, for $t
\ge0$, $s > 0$, $k \in \bbZ_+$ and $i,j\in\bbD$,
\begin{eqnarray*}
\lefteqn{
{ |p_k^{(t+s)}(i,j) - p_k^{(t)}(i,j)| \over s}
}
\quad &&
\nonumber
\\
&\le&
\sum_{(\ell,j)\in\bbF} { | p^{(t+s)}(k,i;\ell,j) - p^{(t)}(k,i;\ell,j) | \over s}
\le 2q(k,i;k,i).
\end{eqnarray*}
Thus, combining (\ref{eqn-FKM-DEQ}), (\ref{defn-p_k^{(t)}(i,j)}) and
the dominated convergence theorem yields, for $t \ge 0$, $k \in
\bbZ_+$ and $i,j\in\bbD$,
\begin{eqnarray*}
{\rmd \over \rmd t}p_k^{(t)}(i,j)
&=& 
\lim_{s \downarrow0}
{p_k^{(t+s)}(i,j) - p_k^{(t)}(i,j) \over s}
\nonumber
\\
&=& \sum_{\ell=0}^{\infty}
\lim_{s \downarrow0}
{p^{(t+s)}(k,i;\ell,j) - p^{(t)}(k,i;\ell,j) \over s}
\nonumber
\\
&=& \sum_{\ell=0}^{\infty} \sum_{(\ell',j')\in\bbF}  
p^{(t)}(k,i;\ell',j') q(\ell',j';\ell,j)
\nonumber
\\
&=& \sum_{(\ell',j')\in\bbF}  
p^{(t)}(k,i;\ell',j') \sum_{\ell=0}^{\infty}  q(\ell',j';\ell,j).
\end{eqnarray*}
Substituting (\ref{eqn-xi(i,j)}) and (\ref{defn-p_k^{(t)}(i,j)}) into
the above equation, we obtain
\begin{eqnarray*}
{\rmd \over \rmd t}p_k^{(t)}(i,j)
&=& \sum_{j'\in\bbD} \sum_{\ell' \in \bbZ_+} p^{(t)}(k,i;\ell',j') \xi(j',j)
\nonumber
\\
&=& \sum_{j'\in\bbD} p_k^{(t)}(i,j') \xi(j',j),
\qquad t \ge0,\ k \in \bbZ_+,\ i,j\in\bbD.
\end{eqnarray*}
Therefore, 
\begin{eqnarray}
\lefteqn{
\PP(J_t = j \mid X_0 = k, J_0=i)
}
\qquad &&
\nonumber
\\
&=& p_k^{(t)}(i,j)
= [ \exp\{ \vc{\Xi} t\} ]_{i,j},
\qquad t \ge 0,\ k \in \bbZ_+,\ i,j \in \bbD,
\label{eqn-p_k^{(t)}(i,j)}
\end{eqnarray}
where $[\,\cdot\,]_{i,j}$ denotes the $(i,j)$th element of the $|\bbD|
\times |\bbD|$ matrix in the square brackets. In addition, from (\ref{eqn-p_k^{(t)}(i,j)}), we have
\begin{eqnarray}
\PP(J_t = j \mid J_0=i)
&=& \sum_{k=0}^{\infty} [ \exp\{ \vc{\Xi} t\} ]_{i,j}\PP(X_0 = k \mid J_0=i)
\nonumber
\\
&=& [ \exp\{ \vc{\Xi} t\} ]_{i,j},
\qquad t \ge 0,\ i,j \in \bbD.
\label{eqn-161028-01}
\end{eqnarray}
Note here that $\vc{\Xi}$ is a conservative $q$-matrix (i.e., $\vc{\Xi}\vc{e} =
\vc{0}$), because $\vc{\Xi}$ satisfies (\ref{eqn-xi(i,j)}) and $\vc{Q}$
is the infinitesimal generator of the Markov chain $\{(X_t,J_t)\}$.
As a result, (\ref{eqn-161028-01}) shows that $\{J(t);t\ge0\}$ is a Markov
chain with state space $\bbD$ and infinitesimal generator $\vc{\Xi}$.
\end{proof}

\begin{lem}\label{lem-Q}
If $\vc{Q}$ is regular, then the following are equivalent: (a) $\vc{Q}
\in \sfBM_d$; and (b) $\vc{P}^{(t)} \in \sfBM_d$, i.e.,
$(\vc{T}_d^{\leqslant N})^{-1} \vc{P}^{(t)} \vc{T}_d^{\leqslant N} \ge
\vc{O}$ for all $t \ge 0$.
\end{lem}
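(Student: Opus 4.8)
The plan is to pass everything through the similarity transformation $\wh{\vc{A}} := (\vc{T}_d^{\leqslant N})^{-1}\vc{A}\,\vc{T}_d^{\leqslant N}$, under which block monotonicity of the generator becomes the statement that the off-diagonal entries of $\wh{\vc{Q}}$ are nonnegative (a Metzler-type condition), while $\vc{P}^{(t)}\in\sfBM_d$ becomes $\wh{\vc{P}}^{(t)}\ge\vc{O}$. First I would check that $\wh{\vc{Q}}$ and $\wh{\vc{P}}^{(t)}$ are genuinely well defined when $N=\infty$: each row of $\vc{Q}$ is absolutely summable ($\sum_{(\ell,j)}|q(k,i;\ell,j)|=2q_{(k,i)}<\infty$) and each row of $\vc{P}^{(t)}$ is summable, so the tail sums produced by right multiplication by $\vc{T}_d$ converge, and conjugating (\ref{eqn-FKM-DEQ})--(\ref{eqn-BKM-DEQ}) gives $\frac{\rmd}{\rmd t}\wh{\vc{P}}^{(t)}=\wh{\vc{P}}^{(t)}\wh{\vc{Q}}=\wh{\vc{Q}}\wh{\vc{P}}^{(t)}$ with $\wh{\vc{P}}^{(0)}=\vc{I}$. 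A short computation also shows $[\wh{\vc{Q}}]_{(k,i),(k,i)}\le0$, which will matter below.

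For the implication (b)$\Rightarrow$(a) I would differentiate at $t=0$: for an off-diagonal pair the entry $[\wh{\vc{Q}}]_{ab}$ equals $\lim_{t\downarrow0}[\wh{\vc{P}}^{(t)}]_{ab}/t$, a limit of nonnegative quantities, hence nonnegative. The only thing to justify is the interchange of the $t$-limit with the tail summation hidden in $\wh{\vc{P}}^{(t)}$; this is done exactly as in Lemma~\ref{prop-xi(i,j)}, by dominating the difference quotients through the bound $(1-p^{(s)}(k,i;k,i))/s\le q_{(k,i)}$ and invoking dominated convergence.

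The substantive direction is (a)$\Rightarrow$(b). When $\vc{Q}$ is bounded this is clean and uses only the paper's discrete-time machinery: fixing a uniformization rate $c$ large enough that both $\vc{P}_c:=\vc{I}+\vc{Q}/c$ is stochastic and $\wh{\vc{P}}_c=\vc{I}+\wh{\vc{Q}}/c\ge\vc{O}$ (possible since $\wh{\vc{Q}}$ has nonnegative off-diagonals and its diagonal is bounded below in terms of the finite supremum of the rates $q_{(k,i)}$), one obtains $\vc{P}_c\in\sfBM_d$; Proposition~\ref{prop-2} then gives $\vc{P}_c^{\,n}\in\sfBM_d$ for every $n$, and since $\vc{P}^{(t)}=\rme^{-ct}\sum_{n\ge0}\frac{(ct)^n}{n!}\vc{P}_c^{\,n}$ is a nonnegative mixture of block-monotone matrices (and $\sfBI_d$ is closed under nonnegative combinations and limits), $\vc{P}^{(t)}\in\sfBM_d$. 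For the general, possibly unbounded, $\vc{Q}$ I would instead solve the conjugated backward equation directly by Picard iteration against its own Metzler splitting $\wh{\vc{Q}}=-\wh{\vc{D}}+\wh{\vc{B}}$, where $\wh{\vc{D}}=-\diag(\wh{\vc{Q}})\ge\vc{O}$ and $\wh{\vc{B}}\ge\vc{O}$: the Duhamel series $\wh{\vc{P}}^{(t)}=\sum_{n\ge0}\wh{\vc{P}}^{(t)}_n$ with $\wh{\vc{P}}^{(t)}_0=\rme^{-\wh{\vc{D}}t}$ and $\wh{\vc{P}}^{(t)}_{n+1}=\int_0^t \rme^{-\wh{\vc{D}}(t-s)}\wh{\vc{B}}\,\wh{\vc{P}}^{(s)}_n\,\rmd s$ is manifestly termwise nonnegative, so its sum is $\ge\vc{O}$. (Equivalently, one may check that the conjugated resolvent $(\lambda\vc{I}-\wh{\vc{Q}})^{-1}$ is nonnegative via the same nonnegative Neumann expansion, and then feed the Yosida approximants $\lambda^2(\lambda\vc{I}-\vc{Q})^{-1}-\lambda\vc{I}$, which are bounded and block-monotone, into the bounded case above.)

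The main obstacle is precisely the identification of this nonnegative Picard (equivalently Yosida/resolvent) solution with the actual $\wh{\vc{P}}^{(t)}$: a priori $\wh{\vc{P}}^{(t)}$ is only known to solve the conjugated backward equation, not to be nonnegative. I would close this gap by uniqueness — since $\vc{Q}$ is regular, $\{\vc{P}^{(t)}\}$ is the unique solution of (\ref{eqn-BKM-DEQ}) with $\vc{P}^{(0)}=\vc{I}$, and because conjugation by $\vc{T}_d^{\leqslant N}$ is a bijection this uniqueness transfers to the conjugated equation; the Duhamel series is a solution lying in the admissible class, so it must coincide with $\wh{\vc{P}}^{(t)}$, forcing $\wh{\vc{P}}^{(t)}\ge\vc{O}$. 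The care needed here — convergence of the nonnegative series to a row-summable solution and the appeal to regularity (non-explosiveness) to guarantee uniqueness — is where essentially all the work sits, the remainder being the routine conjugation and dominated-convergence bookkeeping already rehearsed in Lemma~\ref{prop-xi(i,j)}.
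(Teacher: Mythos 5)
Your reduction of (b)$\Rightarrow$(a) to differentiation at $t=0$, dominated by the Chung bound $1-p^{(t)}(k,i;k,i)\le t\,|q(k,i;k,i)|$, is exactly the paper's argument, and your bounded case of (a)$\Rightarrow$(b) by uniformization is essentially the paper's treatment of the finite generators. The divergence, and the gap, is in how you handle unbounded $\vc{Q}$.

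There are two concrete problems with the Duhamel/Picard construction in the conjugated coordinates. First, well-definedness: the off-diagonal part $\wh{\vc{B}}$ of $\wh{\vc{Q}}=(\vc{T}_d)^{-1}\vc{Q}\vc{T}_d$ need not have finite row sums. Summing the (nonnegative) off-diagonal entries of row $(k,i)$ of $\wh{\vc{Q}}$ produces quantities of the form $\sum_{m,j}(m+1)\{q(k,i;m,j)-q(k-1,i;m,j)\}$, i.e.\ first moments of the jump distribution, which are not assumed finite (think of a heavy-tailed batch arrival process). Hence the products $\wh{\vc{B}}\,\wh{\vc{P}}^{(s)}_n$ in your recursion are sums of nonnegative terms that may equal $+\infty$, and convergence of the series is unclear unless you already know it is dominated by $\wh{\vc{P}}^{(t)}$ --- which presupposes the identification you are trying to establish. (The same row-summability issue undermines the claim that conjugation passes through the backward equation: inserting $\vc{T}_d\vc{T}_d^{-1}$ inside the infinite product $\vc{Q}\vc{P}^{(t)}\vc{T}_d$ is a Fubini interchange of conditionally convergent sums.) Second, the identification step itself: the uniqueness theorem for regular $\vc{Q}$ gives uniqueness of the backward-equation solution \emph{within the class of substochastic transition functions}, not among arbitrary matrix-valued solutions. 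Your Duhamel solution is constructed in conjugated coordinates, and after undoing the conjugation you do not know a priori that it is nonnegative and row-substochastic, so the uniqueness theorem does not apply to it; you flag this as "where all the work sits" but do not actually close it. The paper avoids both difficulties by truncating the state space first: the collapsed chains $(X_t^{\leqslant n},J_t^{\leqslant n})$ have finite generators $\vc{Q}^{\leqslant n}\in\sfBM_d$, uniformization there is unproblematic, and regularity of $\vc{Q}$ is used only to guarantee $t_n\to\infty$ a.s., so that $p^{\leqslant n;(t)}\to p^{(t)}$ and the monotonicity inequalities --- which involve only tail sums of probabilities, uniformly bounded by $1$ --- pass to the limit by dominated convergence. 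To repair your proof, replace the Duhamel-plus-uniqueness step by this truncation-and-limit argument.
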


\proof Before the proof of this lemma, we introduce some symbols. Fix
$n \in \bbN$ arbitrarily and let $t_n = \inf\{t\ge0: X_t \ge
n\}$. Since the Markov chain $\{(X_t,J_t)\}$ is regular,
\begin{equation}
\PP(\lim_{n\to\infty}t_n = \infty) = 1.
\label{lim-P(t_n=infty)}
\end{equation}
Thus, we define $\{(X_t^{\leqslant n},J_t^{\leqslant n}); t\ge0\}$ as a
Markov chain with
state space $\bbF^{\leqslant n}$ such that
\begin{equation}
X_t^{\leqslant n}
= 
\left\{
\begin{array}{ll}
X_t,& 0 \le t < t_n,
\\
n, & t \ge t_n,
\end{array}
\right.
\quad 
J_t^{\leqslant n} = J_t, \quad t \ge 0.
\label{defn-X_t^{<n}}
\end{equation}
We also define $\vc{Q}^{\leqslant n}=(q^{\leqslant
  n}(k,i;\ell,j))_{(k,i),(\ell,j)\in\bbF^{\leqslant n}}$ as the
infinitesimal generator of \break $\{(X_t^{\leqslant n},J_t^{\leqslant
  n})\}$. It then follows that, for $i,j \in \bbD$,
\begin{equation}
q^{\leqslant n}(k,i;\ell,j)
=\left\{
\begin{array}{ll}
q(k,i;\ell,j), & k,\ell \in \bbZ_+^{\leqslant n-1},
\\
\dm\sum_{m=n}^{\infty}q(k,i;m,j), & k\in \bbZ_+^{\leqslant n-1},~\ell = n,
\\
\dm\sum_{m=0}^{\infty}q(n,i;m,j) = \xi(i,j), & k=\ell=n,
\\
0, & \mbox{otherwise}.
\end{array}
\right.
\label{defn-q^{<n}(k,i;l,j)}
\end{equation}
Furthermore, let $p^{\leqslant n;(t)}(k,i;\ell,j)$ denote 
\begin{equation}
p^{\leqslant n;(t)}(k,i;\ell,j)
=
\PP(X_t^{\leqslant n} =\ell,J_t^{\leqslant n}=j 
\mid X_0^{\leqslant n} = k,J_0^{\leqslant n}=i),
\label{defn-p^{<=n;(t)}}
\end{equation}
for $t \ge 0$ and $(k,i), (\ell,j) \in \bbF^{\leqslant n}$.

We are now ready to prove the present lemma. We first prove that
statement (a) implies statement (b). From $\vc{Q} \in \sfBM_d$ (see
Definition~\ref{defn-Block-Monotonicity}), we have
\begin{equation}
\sum_{m=\ell}^{\infty} q(k-1,i;m,j)
\le \sum_{m=\ell}^{\infty} q(k,i;m,j),\quad 
\mbox{$(k,i;\ell,j) \in \bbB$ with $k \in \bbN$}.
\label{defn-Q-in-BM_d}
\end{equation}
From (\ref{defn-q^{<n}(k,i;l,j)}) and (\ref{defn-Q-in-BM_d}), we also
have
\begin{eqnarray}
\lefteqn{
\sum_{m=\ell}^n q^{\leqslant n}(k-1,i;m,j)
}
\quad&&
\nonumber
\\
&=& \sum_{m=\ell}^{\infty} q(k-1,i;m,j)
\le \sum_{m=\ell}^{\infty} q(k,i;m,j)
\nonumber
\\
&=& \sum_{m=\ell}^n q^{\leqslant n}(k,i;m,j),\quad 
\mbox{$(k,i;\ell,j) \in \bbB^{\leqslant n}$ with $k \in \bbN$}.
\label{defn-Q^{<n}-in-BM_d-01}
\end{eqnarray}
The inequality (\ref{defn-Q^{<n}-in-BM_d-01})
implies that all the off-diagonal elements of $(\vc{T}_d^{\leqslant
  n})^{-1}\vc{Q}^{\leqslant n}\vc{T}_d^{\leqslant n}$ are
nonnegative. Thus, we can choose $\sigma_n \in (0,\infty)$ such that
$(\vc{T}_d^{\leqslant n})^{-1}(\vc{I} + \sigma_n^{-1}
\vc{Q}^{\leqslant n}) \vc{T}_d^{\leqslant n} \ge \vc{O}$, which yields
\begin{eqnarray}
\lefteqn{
(\vc{T}_d^{\leqslant n})^{-1} 
\exp\{\vc{Q}^{\leqslant n}t\} \vc{T}_d^{\leqslant n} 
}
~~&&
\nonumber
\\
&=& \sum_{m=0}^{\infty} \rme^{-\sigma_n t}{(\sigma_n t)^m \over m!}
(\vc{T}_d^{\leqslant n})^{-1} 
(\vc{I} + \sigma_n^{-1} \vc{Q}^{\leqslant n})^m 
\,\vc{T}_d^{\leqslant n} \ge \vc{O}, \quad t \ge 0.\qquad
\label{eqn-25}
\end{eqnarray}
It follows from (\ref{eqn-25}) that,
for $(k,i)\in \bbF^{\leqslant n-1}$ and $(\ell,j) \in \bbF^{\leqslant n}$,
\begin{equation}
\sum_{m=\ell}^n
\left\{ 
p^{\leqslant n;(t)}(k+1,i;m,j) - p^{\leqslant n;(t)}(k,i;m,j) 
\right\} 
\ge 0,\qquad t \ge 0.
\label{eqn-18b}
\end{equation}
It also follows from (\ref{lim-P(t_n=infty)}) and
(\ref{defn-X_t^{<n}}) that, for any fixed $T>0$, the process
$\{(X_t^{\leqslant n},J_t^{\leqslant n}); \break 0 \le t < T\}$ converges to
the process $\{(X_t,J_t);0 \le t < T\}$ with probability one (w.p.1)
as $n \to \infty$ and thus, for each $(k,i;m,j) \in \bbF^2$,
\begin{eqnarray}
\lim_{n\to\infty}p^{\leqslant n;(t)}(k,i;m,j)
= p(k,i;m,j) \quad  \mbox{for all $t \in [0,T)$}.
\label{lim-p^{<n;(t)}(k,i;m,j)}
\end{eqnarray}
Applying the dominated convergence theorem to (\ref{eqn-18b}) and
using (\ref{lim-p^{<n;(t)}(k,i;m,j)}) yield
\[
\sum_{m=\ell}^{\infty}
\left\{ 
p^{(t)}(k+1,i;m,j) - p^{(t)}(k,i;m,j) 
\right\} 
\ge 0,\quad 0 \le t \le T,\ (k,i;\ell,j) \in \bbF^2,
\]
where $T > 0$ is arbitrarily fixed. Note here that
\[
\sum_{m=\ell}^{\infty} p^{(t)}(0,i;m,j) \ge 0, 
\quad t \ge 0,\ i \in \bbD,\ (\ell,j) \in \bbF.
\]
As a result, we obtain $\vc{T}_d^{-1} \vc{P}^{(t)} \vc{T}_d
\ge \vc{O}$ for all $t \ge 0$.

Next, we prove that statement (b) implies statement (a). To this end,
we consider the limit $\lim_{t \downarrow 0}
\vc{T}_d^{-1}(\vc{P}^{(t)} - \vc{I})\vc{T}_d / t$, that is,
\begin{eqnarray*}
&&
\dm\lim_{t \downarrow 0}\sum_{m=\ell}^{\infty} 
{ \dm\{p^{(t)}(0,i;m,j) - \chi_{(0,i)}(m,j)\} \over t}, 
\qquad (0,i;\ell,j) \in \bbF^2,
\end{eqnarray*}
and
\begin{eqnarray*}
&&\dm\lim_{t \downarrow 0}\sum_{m=\ell}^{\infty}
\Bigg[
{ p^{(t)}(k,i;m,j) - \chi_{(k,i)}(m,j) \over t}
\nonumber
\\
&& {} \qquad\quad 
- { p^{(t)}(k-1,i;m,j) - \chi_{(k-1,i)}(m,j) \over t}
\Bigg],
\quad(k,i;\ell,j) \in \bbF^2 \mbox{~with~} k \in \bbN,
\end{eqnarray*}
where $\chi_{(k,i)}(\ell,j)$, $(k,i;\ell,j) \in \bbF^2$, is given by 
\[
\chi_{(k,i)}(\ell,j)
= \left\{
\begin{array}{ll}
1, & (k,i) = (\ell,j),
\\
0, & (k,i) \neq (\ell,j).
\end{array}
\right.
\]

For all $(k,i) \in \bbF$, we have
\begin{eqnarray*}
\lefteqn{
\sum_{(m,j) \in \bbF} | p^{(t)}(k,i;m,j) - \chi_{(k,i)}(m,j) | 
}
\quad &&
\nonumber
\\
&\le& 1 - p^{(t)}(k,i;k,i) 
+ \sum_{(m,j) \in \bbF \setminus \{(k,i)\}} p^{(t)}(k,i;m,j) 
\nonumber
\\ 
&\le& 2\{ 1 - p^{(t)}(k,i;k,i) \}
\le 2t\, |q(k,i;k,i)|,
\end{eqnarray*}
where the last inequality follows from \cite[Theorem~II.3.1]{Chun67}.
Therefore, using dominated convergence theorem and (\ref{defn-Q}), we
obtain
\[
\lim_{t \downarrow 0}{ \vc{T}_d^{-1}(\vc{P}^{(t)} - \vc{I})\vc{T}_d \over t}
=
\vc{T}_d^{-1} \vc{Q} \vc{T}_d.
\]
Note here that $\vc{T}_d^{-1} \vc{P}^{(t)} \vc{T}_d \ge \vc{O}$ (due
to statement (b)), which implies that all the off-diagonal elements of
$\vc{T}_d^{-1} \vc{Q} \vc{T}_d$ are nonnegative, i.e., $\vc{Q} \in
\sfBM_d$. Consequently, statement (a) holds. \qed

\medskip

We now make the following assumption, in addition to
Assumption~\ref{assumpt-basic-zero}.
\begin{assumpt}\label{assumpt-Q_1-Q_2}
Suppose that $\vc{Q} \prec_d \wt{\vc{Q}}$ and either $\vc{Q} \in
\sfBM_d$ or $\wt{\vc{Q}} \in \sfBM_d$.
\end{assumpt}
\begin{lem}\label{lem-Q-2}
Suppose that Assumption~\ref{assumpt-Q_1-Q_2} holds. It then holds
that
\[
\xi(\ell,j)
= \sum_{\ell\in\bbZ_+^{\leqslant N}} q(k,i;\ell,j)
= \sum_{\ell\in\bbZ_+^{\leqslant N}} \wt{q}(k,i;\ell,j),
\qquad k \in \bbZ_+^{\leqslant N},\ i,j \in \bbD,
\]
which is constant with respect to $k$. Furthermore, $\vc{\Xi} =
(\xi(i,j))_{i,j\in\bbD}$ is the common infinitesimal generator of the
Markov chains $\{J_t;t\ge0\}$ and $\{\wt{J}_t;t\ge0\}$.
\end{lem}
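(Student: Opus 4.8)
The plan is to extract from the block-wise dominance $\vc{Q} \prec_d \wt{\vc{Q}}$ a family of scalar inequalities between the tail row sums of the two generators, and then to upgrade those inequalities to equalities of the \emph{full} row-block sums by invoking conservativeness. Recall that the $(k,i;\ell,j)$ entry of $\vc{Q}\vc{T}_d^{\leqslant N}$ is exactly the partial sum $\sum_{m=\ell}^N q(k,i;m,j)$, and likewise for $\wt{\vc{Q}}$. Thus Definition~\ref{defn-dominance} reads, entrywise,
\begin{equation*}
\sum_{m=\ell}^N q(k,i;m,j) \le \sum_{m=\ell}^N \wt{q}(k,i;m,j), \qquad (k,i),(\ell,j)\in\bbF^{\leqslant N}.
\end{equation*}
Specializing to $\ell=0$ gives $\sum_{m=0}^N q(k,i;m,j) \le \sum_{m=0}^N \wt{q}(k,i;m,j)$ for every $(k,i)\in\bbF^{\leqslant N}$ and $j\in\bbD$.

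The key step is to turn this inequality into an equality. Summing the $\ell=0$ inequality over all $j\in\bbD$ and using that both $\vc{Q}$ and $\wt{\vc{Q}}$ are conservative (see (\ref{eqn-Qe=0})), the left-hand side becomes
\begin{equation*}
\sum_{j\in\bbD}\sum_{m=0}^N q(k,i;m,j) = \sum_{(m,j)\in\bbF^{\leqslant N}} q(k,i;m,j) = 0,
\end{equation*}
and identically $\sum_{j\in\bbD}\sum_{m=0}^N \wt{q}(k,i;m,j)=0$. Since the $j$-indexed terms on the left are termwise no larger than those on the right, yet the two sums over $j$ coincide (both equal $0$), every one of these inequalities must in fact be an equality:
\begin{equation*}
\sum_{m=0}^N q(k,i;m,j) = \sum_{m=0}^N \wt{q}(k,i;m,j), \qquad (k,i)\in\bbF^{\leqslant N},\ j\in\bbD.
\end{equation*}

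It remains to establish constancy in $k$ and the Markov property. By Assumption~\ref{assumpt-Q_1-Q_2}, at least one of $\vc{Q}$, $\wt{\vc{Q}}$ lies in $\sfBM_d$; say $\vc{Q}\in\sfBM_d$ (the other case is symmetric). Lemma~\ref{prop-xi(i,j)} then supplies both that $\xi(i,j):=\sum_{m=0}^N q(k,i;m,j)$ is independent of $k$ and that $\{J_t\}$ is a Markov chain with generator $\vc{\Xi}=(\xi(i,j))_{i,j\in\bbD}$. Combining this with the equality just derived shows that $\sum_{m=0}^N \wt{q}(k,i;m,j)=\xi(i,j)$ is also constant in $k$, which is the displayed identity of the lemma. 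Finally, to conclude that $\{\wt{J}_t\}$ is Markov with the same generator $\vc{\Xi}$, I would re-run the argument of part (b) of Lemma~\ref{prop-xi(i,j)} verbatim for $\wt{\vc{Q}}$: that argument uses only the constancy in $k$ of the row-block sums (together with conservativeness, the forward equation, and dominated convergence), and not the full block-monotonicity of the generator.

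The main obstacle is the equality step, and its only real subtlety is recognizing that conservativeness pins down the sum over $j$ for \emph{both} generators to the common value $0$, so that a family of one-sided inequalities collapses to equalities. The remaining work is essentially bookkeeping: verifying that the proof of Lemma~\ref{prop-xi(i,j)}(b) is driven purely by the constancy of $\sum_m q(\cdot,i;m,j)$ in the level index, so that it transfers directly from the block-monotone generator to the one that is merely block-wise comparable to it, and checking that the case $\wt{\vc{Q}}\in\sfBM_d$ is handled by interchanging the roles of $\vc{Q}$ and $\wt{\vc{Q}}$ throughout.
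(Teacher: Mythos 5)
Your proposal is correct and follows essentially the same route as the paper: the $\ell=0$ column of the block-wise dominance relation, summed over $j\in\bbD$ and squeezed against zero, forces termwise equality of the row-block sums, after which Lemma~\ref{prop-xi(i,j)} supplies constancy in $k$ and the common generator $\vc{\Xi}$ of the phase processes. The only (harmless) difference is that you invoke conservativeness of $\wt{\vc{Q}}$ where the paper uses only the $q$-matrix property $\wt{\vc{Q}}\vc{e}\le\vc{0}$ in the squeeze, and your explicit observation that the proof of Lemma~\ref{prop-xi(i,j)}~(b) relies only on the constancy of the row-block sums (so it transfers to the generator that is merely block-wise comparable) is a point the paper leaves implicit.
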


\begin{proof}
It follows from $\vc{Q}\vc{T}_d \le \wt{\vc{Q}}\vc{T}_d$ (see
Definition~\ref{defn-dominance}) that
\[
\sum_{\ell=0}^{\infty} q(k,i;\ell,j)
\le \sum_{\ell=0}^{\infty} \wt{q}(k,i;\ell,j),
\qquad k \in \bbZ_+,\ i,j \in \bbD.
\]
Using this inequality, $\vc{Q}\vc{e}=\vc{0}$ (due to
Assumption~\ref{assumpt-basic-zero}) and $\wt{\vc{Q}}\vc{e} \le \vc{0}$
(see \cite[Section~1.2, Proposition~2.6]{Ande91}), we have
\[
0 = \sum_{\ell=0}^{\infty} \sum_{j\in\bbD} q(k,i;\ell,j)
\le \sum_{\ell=0}^{\infty} \sum_{j\in\bbD} \wt{q}(k,i;\ell,j) \le 0,
\qquad k \in \bbZ_+,\ i \in \bbD,
\]
which leads to 
\begin{equation}
\sum_{\ell=0}^{\infty} q(k,i;\ell,j) =
\sum_{\ell=0}^{\infty} \wt{q}(k,i;\ell,j),
\qquad k \in \bbZ_+,\ i,j \in \bbD.
\label{eqn-sum-q(k,i;l,j)}
\end{equation}
Furthermore, it follows from Lemma~\ref{prop-xi(i,j)}~(a) and either
$\vc{Q} \in \sfBM_d$ or $\wt{\vc{Q}} \in \sfBM_d$ that, for each
$(i,j) \in \bbD$, either of $\sum_{\ell=0}^{\infty}q(k,i;\ell,j)$ and
$\sum_{\ell=0}^{\infty}\wt{q}(k,i;\ell,j)$ is constant with respect to
$k \in \bbZ_+$. As a result, both sides of (\ref{eqn-sum-q(k,i;l,j)})
are constant with respect to $k$.  The remaining statement is immediate from Lemma~\ref{prop-xi(i,j)}~(b).
\end{proof}
\begin{lem}\label{lem-Q-3}
Suppose that Assumption~\ref{assumpt-Q_1-Q_2} holds. Furthermore, if
$\wt{\vc{Q}}$ is regular, then
\begin{enumerate}
\item $\vc{Q}$ is regular; and
\item $\vc{P}^{(t)} \prec_d \wt{\vc{P}}^{(t)}$ for all
  $t \ge 0$.
\end{enumerate}
\end{lem}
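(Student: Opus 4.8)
The plan is to reduce both claims to the finite \emph{freeze-at-boundary} truncations $\{(X_t^{\leqslant n},J_t^{\leqslant n})\}$ and their analogues $\{(\wt{X}_t^{\leqslant n},\wt{J}_t^{\leqslant n})\}$ from the proof of Lemma~\ref{lem-Q}, to prove a truncated block-wise dominance there, and then to let $n\to\infty$. The advantage is that $\vc{Q}^{\leqslant n}$ and $\wt{\vc{Q}}^{\leqslant n}$ are finite $q$-matrices, so $\vc{P}^{\leqslant n;(t)}=\exp\{\vc{Q}^{\leqslant n}t\}$ and $\wt{\vc{P}}^{\leqslant n;(t)}=\exp\{\wt{\vc{Q}}^{\leqslant n}t\}$ are honest and available \emph{without} knowing that $\vc{Q}$ is regular. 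Using the explicit entries in \eqref{defn-q^{<n}(k,i;l,j)} together with the common $\xi(i,j)$ of Lemma~\ref{lem-Q-2}, one checks exactly as in \eqref{defn-Q^{<n}-in-BM_d-01} that the hypotheses descend to the truncations: $\vc{Q}\prec_d\wt{\vc{Q}}$ gives $\vc{Q}^{\leqslant n}\vc{T}_d^{\leqslant n}\le\wt{\vc{Q}}^{\leqslant n}\vc{T}_d^{\leqslant n}$, and whichever of $\vc{Q},\wt{\vc{Q}}$ lies in $\sfBM_d$ has its truncation in $\sfBM_d$ as well.

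The first substantive step is the finite comparison $\vc{P}^{\leqslant n;(t)}\vc{T}_d^{\leqslant n}\le\wt{\vc{P}}^{\leqslant n;(t)}\vc{T}_d^{\leqslant n}$ for all $t\ge0$. Setting $\vc{W}(t)=(\wt{\vc{P}}^{\leqslant n;(t)}-\vc{P}^{\leqslant n;(t)})\vc{T}_d^{\leqslant n}$, differentiating, and inserting a telescoping term, I obtain a linear matrix ODE $\vc{W}'(t)=\vc{W}(t)\vc{B}+\vc{G}(t)$ with $\vc{W}(0)=\vc{O}$, where I take $\vc{B}=(\vc{T}_d^{\leqslant n})^{-1}\vc{Q}^{\leqslant n}\vc{T}_d^{\leqslant n}$ and $\vc{G}(t)=\wt{\vc{P}}^{\leqslant n;(t)}(\wt{\vc{Q}}^{\leqslant n}-\vc{Q}^{\leqslant n})\vc{T}_d^{\leqslant n}$ in the case $\vc{Q}\in\sfBM_d$, and the symmetric choice $\vc{B}=(\vc{T}_d^{\leqslant n})^{-1}\wt{\vc{Q}}^{\leqslant n}\vc{T}_d^{\leqslant n}$, $\vc{G}(t)=\vc{P}^{\leqslant n;(t)}(\wt{\vc{Q}}^{\leqslant n}-\vc{Q}^{\leqslant n})\vc{T}_d^{\leqslant n}$ in the case $\wt{\vc{Q}}\in\sfBM_d$. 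In either case $\vc{G}(t)\ge\vc{O}$ (a nonnegative transition matrix times the nonnegative matrix $(\wt{\vc{Q}}^{\leqslant n}-\vc{Q}^{\leqslant n})\vc{T}_d^{\leqslant n}$), and $\vc{B}$ has nonnegative off-diagonal entries by block monotonicity, so $\exp\{\vc{B}s\}\ge\vc{O}$ (as in \eqref{eqn-25}). The variation-of-constants formula $\vc{W}(t)=\int_0^t\vc{G}(s)\exp\{\vc{B}(t-s)\}\,\rmd s$ then gives $\vc{W}(t)\ge\vc{O}$.

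With the finite comparison in hand, part~(1) comes from its \emph{top} block-column. Summing the $\ell=n$ inequality $p^{\leqslant n;(t)}(k,i;n,j)\le\wt{p}^{\leqslant n;(t)}(k,i;n,j)$ over $j\in\bbD$ gives $\PP(t_n\le t\mid X_0=k,J_0=i)\le\PP(\wt{t}_n\le t\mid\wt{X}_0=k,\wt{J}_0=i)$, where $t_n,\wt{t}_n$ are the times to reach level $n$; since $\wt{\vc{Q}}$ is regular the right-hand side tends to $0$ as $n\to\infty$ (cf.\ \eqref{lim-P(t_n=infty)}), hence so does the left-hand side, so $\lim_{n\to\infty}t_n=\infty$ a.s.\ and $\vc{Q}$ is regular. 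Part~(2) then follows from the \emph{interior} block-columns: now that $\vc{Q}$ is regular, the convergence \eqref{lim-p^{<n;(t)}(k,i;m,j)} holds for both chains, so letting $n\to\infty$ in $\sum_{m=\ell}^n\{\wt{p}^{\leqslant n;(t)}(k,i;m,j)-p^{\leqslant n;(t)}(k,i;m,j)\}\ge0$ yields $\vc{P}^{(t)}\vc{T}_d\le\wt{\vc{P}}^{(t)}\vc{T}_d$, i.e.\ $\vc{P}^{(t)}\prec_d\wt{\vc{P}}^{(t)}$.

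I expect the main obstacle to be conceptual. Block-wise dominance of the \emph{limiting} transition functions only compares the upper tails $\sum_{m\ge\ell}$, and at $\ell=0$ it yields $\vc{P}^{(t)}\vc{e}\le\wt{\vc{P}}^{(t)}\vc{e}$ --- the \emph{wrong} direction for inferring honesty of $\vc{Q}$ from that of $\wt{\vc{Q}}$. The resolution is that regularity must be read off from the \emph{top} block-column of the \emph{finite} truncation, where the very same dominance bounds the escape mass in the correct direction; recognizing this, and then justifying the two limit interchanges (monotone passage to the minimal process underlying \eqref{lim-p^{<n;(t)}(k,i;m,j)}, whose validity for the $\vc{Q}$-chain itself rests on the regularity established in part~(1)), is where the care lies.
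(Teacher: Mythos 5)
Your proposal is correct, and it reaches the finite comparison $\exp\{\vc{Q}^{\leqslant n}t\}\vc{T}_d^{\leqslant n}\le\exp\{\wt{\vc{Q}}^{\leqslant n}t\}\vc{T}_d^{\leqslant n}$ and the subsequent limit passage in essentially the same overall architecture as the paper, but with two genuine differences in the middle. First, the paper obtains the finite dominance (\ref{ineqn-exp{Q_1^{<n}}-exp{Q_2^{<n}}}) by uniformizing both truncated generators with a common rate $\varsigma_n$ and invoking the discrete-time dominance result \cite[Proposition 2.3(b)]{Masu15-ADV} for powers of the resulting stochastic matrices; your variation-of-constants argument ($\vc{W}'=\vc{W}\vc{B}+\vc{G}$ with $\vc{G}\ge\vc{O}$ and $\exp\{\vc{B}s\}\ge\vc{O}$, handling the two cases of Assumption~\ref{assumpt-Q_1-Q_2} by the two symmetric telescopings) is a self-contained continuous-time substitute that avoids the external citation. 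Second, and more interestingly, for part (a) the paper constructs the pathwise coupling $X_t^{\leqslant n}\le\wt{X}_t^{\leqslant n}$ via Lemma~\ref{lem2-continuous-ordering}(a) and argues by contradiction from (\ref{prob-t_n<=T_{infty}}), which forces the somewhat delicate dependency structure spelled out in the remark after the lemma statement (Lemma~\ref{lem-Q-3}(a) needs Lemma~\ref{lem2-continuous-ordering}(a), while Lemma~\ref{lem2-continuous-ordering}(b) needs Lemma~\ref{lem-Q-3}(b)); you instead read regularity directly off the $\ell=n$ block-column of the purely analytic finite comparison, since $\sum_{j}p^{\leqslant n;(t)}(k,i;n,j)=\PP(t_n\le t)$ and the right-hand side $\PP(\wt t_n\le t)\to0$ by regularity of $\wt{\vc{Q}}$ and monotonicity of $\wt t_n$. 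This removes the coupling machinery from part (a) entirely and untangles the circularity concern; the only care needed (which you correctly flag) is that the dominance of the limiting transition functions compares upper tails and so cannot itself yield honesty of $\vc{P}^{(t)}$ --- regularity must indeed come from the absorbing top level of the finite truncation, exactly as you do it.
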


\begin{rem}
Lemma~\ref{lem-Q-3}~(b) is proved by using Lemma~\ref{lem-Q-3}~(a),
and the latter is proved based on
Lemma~\ref{lem2-continuous-ordering}~(a) (where $N$ is assumed to be
finite). Lemma~\ref{lem2-continuous-ordering}~(a) is proved without
Lemma~\ref{lem-Q-3}~(a) or (b) whereas
Lemma~\ref{lem2-continuous-ordering}~(b) (where $N$ is possibly
infinite) is proved by Lemma~\ref{lem-Q-3}~(b). For details, see the
proof of Lemma~\ref{lem2-continuous-ordering} in
Appendix~\ref{sec-pathwise-ordering}.
\end{rem}

\noindent
{\it Proof of Lemma~\ref{lem-Q-3}.~}We first provide some preliminaries to
the proof of statement
(a). Recall here that $\{(X_t^{\leqslant n},J_t^{\leqslant
  n});t\ge0\}$ is derived from $\{(X_t,J_t);t\ge0\}$, as shown in
(\ref{defn-X_t^{<n}}). Similarly, we define
$\{(\wt{X}_t^{\leqslant n},\wt{J}_t^{\leqslant
  n});t\ge0\}$ as a Markov chain with state space $\bbF^{\leqslant n}$
such that
\begin{eqnarray}
\wt{X}_t^{\leqslant n}
&=& 
\left\{
\begin{array}{ll}
\wt{X}_t,& 0 \le t < \wt{t}_n,
\\
n, & t \ge \wt{t}_n,
\end{array}
\right.
\quad 
\wt{J}_t^{\leqslant n} = \wt{J}_t, \quad t \ge 0,
\label{defn-X_{2,t}^{<n}}
\end{eqnarray}
where $\wt{t}_n = \inf\{t\ge0:\wt{X}_t \ge n\}$. Let $\wt{\vc{P}}^{\leqslant
  n;(t)}=(\wt{p}^{\leqslant
  n;(t)}(k,i;\ell,j))_{(k,i),(\ell,j)\in\bbF}$, $t \ge 0$, and
$\wt{\vc{Q}}^{\leqslant n}=(\wt{q}^{\leqslant
  n}(k,i;\ell,j))_{(k,i),(\ell,j)\in\bbF}$ denote the transition matrix function
and infinitesimal generator, respectively, of the Markov chain
$\{(\wt{X}_t^{\leqslant n},\wt{J}_t^{\leqslant n})\}$,
i.e.,
\begin{eqnarray}
\wt{p}^{\leqslant n;(t)}(k,i;\ell,j)
&=& 
\PP( \wt{X}_t^{\leqslant n} = \ell, \wt{J}_t^{\leqslant n} = j
\mid \wt{X}_0^{\leqslant n} = k, \wt{J}_0^{\leqslant n} = i),
\label{defn-wt{p}^{<=n;(t)}}
\\
\wt{q}^{\leqslant n}(k,i;\ell,j)
&=& \lim_{t \downarrow 0}
{ \wt{p}^{\leqslant n;(t)}(k,i;\ell,j) - \chi_{(k,i)}(\ell,j) \over t }.
\nonumber
\end{eqnarray}
It then follows from (\ref{defn-X_{2,t}^{<n}}) and Lemma~\ref{lem-Q-2} 
that, for $i,j \in \bbD$,
\begin{equation}
\wt{q}^{\leqslant n}(k,i;\ell,j)
=\left\{
\begin{array}{ll}
\wt{q}(k,i;\ell,j), & k,\ell \in \bbZ_+^{\leqslant n-1},
\\
\dm\sum_{m=n}^{\infty}\wt{q}(k,i;m,j), & k\in \bbZ_+^{\leqslant n-1},~\ell = n,
\\
\dm\sum_{m=0}^{\infty}\wt{q}(n,i;m,j) = \xi(i,j), & k=\ell=n,
\\
0, & \mbox{otherwise}.
\end{array}
\right.
\label{defn-widetilde{q}^{<n}(k,i;l,j)}
\end{equation}
Using (\ref{defn-q^{<n}(k,i;l,j)}), (\ref{defn-widetilde{q}^{<n}(k,i;l,j)})
 and $\vc{Q}\vc{T}_d \le \wt{\vc{Q}} \vc{T}_d$, we have
\begin{equation}
\vc{Q}^{\leqslant n}\vc{T}_d^{\leqslant n} \le
\wt{\vc{Q}}^{\leqslant n}\vc{T}_d^{\leqslant n}.
\label{ineqn-Q_1^{<n}-Q_2^{<n}}
\end{equation}
Note here that $\vc{Q} \in \sfBM_d$ (resp.\ $\wt{\vc{Q}} \in \sfBM_d$)
implies $\vc{Q}^{\leqslant n} \in \sfBM_d$
(resp.\ $\wt{\vc{Q}}^{\leqslant n} \in \sfBM_d$). Therefore, according
to Lemma~\ref{lem2-continuous-ordering}~(a), we assume, without loss
of generality, that
\begin{equation}
X_t^{\leqslant n} \le \wt{X}_t^{\leqslant n},\quad 
J_t^{\leqslant n} = \wt{J}_t^{\leqslant n} \quad \mbox{for all $t > 0$},
\label{pathwise-ordering-X_t^{<=n}}
\end{equation}
given that $X_0^{\leqslant n} \le \wt{X}_0^{\leqslant n}$ and
$J_0^{\leqslant n} = \wt{J}_0^{\leqslant n}$. 

We now prove statement
(a) by contradiction. To this end, we suppose that $\vc{Q}$ is not regular. Thus, there exist some
$T_{\infty} \in (0,\infty)$ and $(k_0,i_0) \in \bbF$ such that
\begin{equation}
\PP(\cap_{n>k_0}\{t_n \le T_{\infty}\} 
\mid X_0 = k_0, J_0 = i_0) > 0.
\label{prob-t_n<=T_{infty}}
\end{equation}
In addition, it follows from (\ref{defn-X_t^{<n}}),
(\ref{defn-X_{2,t}^{<n}}) and (\ref{pathwise-ordering-X_t^{<=n}}) that
if $X_0 = \wt{X}_0 = k_0$ and $J_0 = \wt{J}_0 = i_0$ then
\[
X_0^{\leqslant n} = \wt{X}_0^{\leqslant n} = k_0, \qquad
J_0^{\leqslant n} = \wt{J}_0^{\leqslant n} = i_0,\qquad n > k_0,
\]
and
\begin{eqnarray*}
\cap_{n>k_0}\{t_n \le T_{\infty}\} 
&\Rightarrow& \cap_{n>k_0}\{X_t^{\leqslant n} \ge n~\mbox{for all}~ t \ge T_{\infty}\}
\nonumber
\\
&\Rightarrow& \cap_{n>k_0}\{\wt{X}_t^{\leqslant n} \ge n~\mbox{for all}~  t \ge T_{\infty}\}
\nonumber
\\
&\Rightarrow& \cap_{n>k_0}\{\wt{t}_n \le T_{\infty}\}.
\end{eqnarray*}
Combining these and (\ref{prob-t_n<=T_{infty}}) yields
\begin{eqnarray*}
\PP(\cap_{n>k_0}\{\wt{t}_n \le T_{\infty}\} 
\mid \wt{X}_0 = k_0, \wt{J}_0 = i_0) > 0,
\end{eqnarray*}
which is inconsistent with the assumption that $\wt{\vc{Q}}$ is
regular. As a result, $\vc{Q}$ must be regular.

Next, we prove statement (b). According to statement (a), the two
infinitesimal generators $\vc{Q}$ and $\wt{\vc{Q}}$ are regular and
thus
\begin{eqnarray}
\vc{Q}\vc{e} &=& \wt{\vc{Q}}\vc{e}=\vc{0},
\label{eqn-Qe-wt{Q}e=0}
\\
\PP(\lim_{n\to\infty}t_n = \infty) 
&=& \PP(\lim_{n\to\infty}\wt{t}_n = \infty) = 1.
\label{lim-t_n-wt{t}_n}
\end{eqnarray}
It follows from (\ref{defn-q^{<n}(k,i;l,j)}), (\ref{defn-widetilde{q}^{<n}(k,i;l,j)}) and (\ref{eqn-Qe-wt{Q}e=0}) that $\vc{I} +
\varsigma_n^{-1}\vc{Q}^{\leqslant n}$ and $\vc{I}
+\varsigma_n^{-1}\wt{\vc{Q}}^{\leqslant n}$ are stochastic, where
\[
\varsigma_n = \max_{(k,i)\in\bbF^{\leqslant n}} 
\max\left( |q^{\leqslant n}(k,i;k,i)|, |\wt{q}^{\leqslant n}(k,i;k,i)| \right)
< \infty.
\]
It also follows from (\ref{ineqn-Q_1^{<n}-Q_2^{<n}}) and
\cite[Proposition 2.3~(b)]{Masu15-ADV} that
\[
(\vc{I} + \varsigma_n^{-1}\vc{Q}^{\leqslant n})^m\vc{T}_d^{\leqslant n} 
\le (\vc{I} + \varsigma_n^{-1}\wt{\vc{Q}}^{\leqslant n})^m\vc{T}_d^{\leqslant n},
\qquad m \in \bbZ_+,
\]
and thus, for $t \ge 0$,
\begin{eqnarray}
\exp\{\vc{Q}^{\leqslant n} t\} \vc{T}_d^{\leqslant n} 
&=& \sum_{m=0}^{\infty} \rme^{-\varsigma t} {(\varsigma t)^m \over m!}
(\vc{I} + \varsigma_n^{-1}\vc{Q}^{\leqslant n})^m\vc{T}_d^{\leqslant n} 
\nonumber
\\
&\le& 
\sum_{m=0}^{\infty} \rme^{-\varsigma t} {(\varsigma t)^m \over m!}
(\vc{I} + \varsigma_n^{-1}\wt{\vc{Q}}^{\leqslant n})^m\vc{T}_d^{\leqslant n}
\nonumber
\\
&=&
\exp\{\wt{\vc{Q}}^{\leqslant n} t\} \vc{T}_d^{\leqslant n}.
\label{ineqn-exp{Q_1^{<n}}-exp{Q_2^{<n}}}
\end{eqnarray}
By definition (see (\ref{defn-p^{<=n;(t)}}) and (\ref{defn-wt{p}^{<=n;(t)}})), $p^{\leqslant n;(t)}(k,i;\ell,j)$ and $\wt{p}^{\leqslant
  n;(t)}(k,i;\ell,j)$ are equal to the $(k,i;\ell,j)$th elements of
$\exp\{\vc{Q}^{\leqslant n} t\}$ and $\exp\{\wt{\vc{Q}}^{\leqslant n}
t\}$, respectively. Therefore, from
(\ref{ineqn-exp{Q_1^{<n}}-exp{Q_2^{<n}}}), we have, for $t \ge 0$ and
$(k,i;\ell,j) \in \bbF^{\leqslant n} \times \bbF^{\leqslant n}$,
\begin{equation}
\sum_{m=\ell}^n 
\left\{
\wt{p}^{\leqslant n;(t)}(k,i;m,j) - p^{\leqslant n;(t)}(k,i;m,j) 
\right\} \ge 0.
\label{eqn-26}
\end{equation}
Furthermore, combining (\ref{defn-X_t^{<n}}), (\ref{defn-X_{2,t}^{<n}}) and (\ref{lim-t_n-wt{t}_n}), we obtain, for any fixed $T > 0$,
\begin{eqnarray}
\lim_{n\to\infty}p^{\leqslant n;(t)}(k,i;\ell,j) 
&=& p^{(t)}(k,i;\ell,j),
~~~~ t \in [0,T),\ (k,i;\ell,j) \in \bbF^2,\quad
\label{add-eqn-08-1}
\\
\lim_{n\to\infty}\wt{p}^{\leqslant n;(t)}(k,i;\ell,j) 
&=& \wt{p}^{(t)}(k,i;\ell,j),
~~~~ t \in [0,T),\ (k,i;\ell,j) \in \bbF^2.\quad
\label{add-eqn-08-2}
\end{eqnarray}
Applying (\ref{add-eqn-08-1}), (\ref{add-eqn-08-2}) and the dominated
convergence theorem to (\ref{eqn-26}) yields
\[
\sum_{m=\ell}^{\infty} 
\left\{\wt{p}^{(t)}(k,i;m,j) - p^{(t)}(k,i;m,j) \right\} \ge 0,
\qquad t \in [0,T),\ (k,i;\ell,j) \in \bbF^2.
\]
Letting $T \to\infty$ in the above inequality, we have $\vc{P}^{(t)}
\vc{T}_d \le \wt{\vc{P}}^{(t)} \vc{T}_d$ for all $t \ge 0$. \qed

\begin{lem}\label{lem-Q-4}
Suppose that Assumption~\ref{assumpt-Q_1-Q_2} holds. Furthermore,
suppose that $\wt{\vc{Q}}$ is regular and irreducible. Under these
conditions, the following are true:
\begin{enumerate}
\item If $\wt{\vc{Q}}$ is recurrent, then $\vc{Q}$ has exactly
  one recurrent communicating class $\bbC \subseteq \bbF^{\leqslant
    N}$ that includes the states $\{(0,i);i\in\bbD\}$, which is
  reachable from all the other states w.p.1.
\item Furthermore, if $\wt{\vc{Q}}$ is positive recurrent, then the
  unique communicating class $\bbC$ is positive recurrent and
  $\vc{\pi} \prec_d \wt{\vc{\pi}}$, where
  $\vc{\pi}:=(\pi(k,i))_{(k,i)\in\bbF^{\leqslant N}}$ and
  $\wt{\vc{\pi}}:=(\wt{\pi}(k,i))_{(k,i)\in\bbF^{\leqslant N}}$ are
  the unique stationary distribution vectors of $\vc{Q}$ and
  $\wt{\vc{Q}}$, respectively.
\end{enumerate}
\end{lem}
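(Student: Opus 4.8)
The plan is to build the whole argument on the pathwise coupling furnished by Lemma~\ref{lem-Q-3} together with the continuous-ordering lemma~\ref{lem2-continuous-ordering}. Since $\wt{\vc{Q}}$ is regular, Lemma~\ref{lem-Q-3} yields that $\vc{Q}$ is regular and $\vc{P}^{(t)} \prec_d \wt{\vc{P}}^{(t)}$ for all $t \ge 0$; hence the two chains can be realized on a common probability space so that, whenever $X_0 = \wt{X}_0$ and $J_0 = \wt{J}_0$, we have $X_t \le \wt{X}_t$ and $J_t = \wt{J}_t$ for all $t \ge 0$ w.p.1. The governing principle is that on this coupling $\wt{X}_t = 0$ forces $X_t = 0$ (because $0 \le X_t \le \wt{X}_t$), while the phases always agree.

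First I would prove (a). Starting both chains at a fixed state $(0,i_0)$, the irreducibility and recurrence of $\wt{\vc{Q}}$ make the event $\{(\wt{X}_t,\wt{J}_t) = (0,i_0)\}$ occur at arbitrarily large times w.p.1; at each such time the coupling gives $(X_t,J_t) = (0,i_0)$, so $(0,i_0)$ is recurrent for $\vc{Q}$, for every $i_0 \in \bbD$. Taking instead the target $(0,j)$ shows that $(0,j)$ is reached from $(0,i_0)$ w.p.1, and by symmetry all states $\{(0,i);i\in\bbD\}$ communicate and form a single recurrent class $\bbC$. Starting both chains from an arbitrary $(k,i)$ and using that $\wt{X}_t$ hits level $0$ w.p.1, I obtain $X_t = 0 \in \bbC$ at that time, so $\bbC$ is reachable from every state w.p.1. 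Uniqueness is then immediate: any recurrent class is closed, yet every state reaches $\bbC$, so no recurrent class other than $\bbC$ can exist.

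For (b) I would pass to the Ces\`aro averages $\vc{\mu}_t(x;\cdot) = t^{-1}\int_0^t \vc{P}^{(s)}(x;\cdot)\,\rmd s$, which are genuine probability vectors because $\vc{Q}$ is regular. Block-wise dominance integrates to $\vc{\mu}_t\vc{T}_d \le t^{-1}\int_0^t \wt{\vc{P}}^{(s)}\,\rmd s\,\vc{T}_d$, and since $\wt{\vc{Q}}$ is ergodic, Remark~\ref{rem-pi} gives $\wt{\vc{P}}^{(t)} \to \vc{e}\wt{\vc{\pi}}$, whence the row-$x$ right-hand side converges in total variation to $\wt{\vc{\pi}}$ and its level-tail sums converge to those of $\wt{\vc{\pi}}$. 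Because $\wt{\vc{\pi}}$ is a proper distribution, these tails vanish as the level grows, which yields tightness of $\{\vc{\mu}_t(x;\cdot)\}_t$; a subsequential limit $\vc{\pi}$ is therefore a stationary distribution of $\vc{Q}$, necessarily supported on the unique recurrent class $\bbC$. This establishes that $\bbC$ is positive recurrent and that $\vc{\pi}$ is the unique stationary distribution. Finally, applying Fatou's lemma to the level-tail sums along the convergent subsequence gives $[\vc{\pi}\vc{T}_d](\ell,j) \le \limsup_t [\vc{\mu}_t\vc{T}_d](\ell,j) \le [\wt{\vc{\pi}}\vc{T}_d](\ell,j)$, that is, $\vc{\pi} \prec_d \wt{\vc{\pi}}$.

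The genuinely delicate part is (b): verifying that the tight Ces\`aro limit is actually stationary (interchanging the limit with the action of $\vc{P}^{(u)}$, which tightness makes legitimate) and controlling the infinite level-tail sums under the limit via Scheff\'e's and Fatou's lemmas. Part (a) is comparatively routine once the coupling and the implication $\wt{X}_t = 0 \Rightarrow X_t = 0$ are in hand.
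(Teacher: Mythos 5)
Your argument is correct, and part (a) is essentially the paper's proof: the same pathwise coupling from Lemma~\ref{lem2-continuous-ordering}, the same observation that $X_t \le \wt{X}_t$ forces $X_t=0$ whenever $\wt{X}_t=0$ while the phases agree, and the same conclusion that the single recurrent class $\bbC \supseteq \{(0,i);i\in\bbD\}$ is reached from everywhere w.p.1. Part (b), however, takes a genuinely different route. The paper stays with the coupling: ergodicity of $\wt{\vc{Q}}$ gives finite mean first-passage times of $(\wt{X}_t,\wt{J}_t)$ to each $(0,i)$, the pathwise ordering transfers this bound to $(X_t,J_t)$, so $\bbC$ is positive recurrent directly; then $\vc{\pi}\prec_d\wt{\vc{\pi}}$ follows by letting $t\to\infty$ in $\vc{P}^{(t)}\vc{T}_d\le\wt{\vc{P}}^{(t)}\vc{T}_d$ using $\vc{P}^{(t)}\to\vc{e}\vc{\pi}$, $\wt{\vc{P}}^{(t)}\to\vc{e}\wt{\vc{\pi}}$ and dominated convergence. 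You instead construct the stationary distribution analytically, via Ces\`aro averages: the integrated dominance plus convergence of the $\wt{\vc{Q}}$-chain yields uniform tail control, hence tightness, a subsequential limit that is invariant (the usual Fatou-plus-mass-conservation argument, which does need $\vc{P}^{(t)}\vc{e}=\vc{e}$, i.e.\ the regularity of $\vc{Q}$ from Lemma~\ref{lem-Q-3}~(a)), necessarily supported on $\bbC$, whence positive recurrence; the dominance of tail sums survives the limit by Fatou. What your approach buys is that part (b) uses only the matrix-level relation $\vc{P}^{(t)}\prec_d\wt{\vc{P}}^{(t)}$ of Lemma~\ref{lem-Q-3}~(b) and does not require knowing in advance that $\vc{P}^{(t)}\to\vc{e}\vc{\pi}$; what the paper's route buys is brevity, since the coupling machinery is already in place and delivers positive recurrence in one line. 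Both proofs are sound; it is the same trade-off the paper itself makes in Theorem~\ref{thm-block-augmentation}~(b), where a tightness-and-subsequence argument is used.
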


\begin{rem}
An irreducible infinitesimal generator of a finite order is ergodic
\cite[Theorems 3.3 and 5.2 and Definitions 5.1 and 5.2]{Brem99} and
thus is regular (see Remark~\ref{rem-regular}). Therefore, if
Assumption~\ref{assumpt-Q_1-Q_2} holds for $N < \infty$ and
$\wt{\vc{Q}}$ is irreducible, then statement (b) of
Lemma~\ref{lem-Q-4} is true.
\end{rem}

\begin{proofof}{Lemma~\ref{lem-Q-4}}
We first prove statement (a). To this end, we assume, without loss of
generality, that the two Markov chains $\{(X_t,J_t);t \ge 0\}$ and
$\{(\wt{X},\wt{J}_t);t \ge 0\}$ are pathwise ordered as follows (see
Lemma~\ref{lem2-continuous-ordering}):
\begin{equation}
X_t \le \wt{X}_t,
\quad
J_t = \wt{J}_t\quad \mbox{for all $t \ge 0$}.
\label{eqn-pathwise-ordering}
\end{equation}
It follows from (\ref{eqn-pathwise-ordering}), together with the
irreducibility and recurrence of $\{(\wt{X},\wt{J}_t)\}$, that
$\{(X_t,J_t)\}$ can reach any state in $\{(0,i);i\in\bbD\}$ from all
the states in the state space $\bbF$ w.p.1. Therefore, $\vc{Q}$ has
exactly one recurrent communicating class $\bbC \subseteq \bbF$ such
that $\bbC \supseteq \{(0,i);i\in\bbD\}$.

Next, we prove statement (b). For this purpose, we additionally assume
that $\wt{\vc{Q}}$ and thus $\{(\wt{X},\wt{J}_t)\}$ are positive
recurrent (i.e., ergodic), under which $\wt{\vc{Q}}$ has the unique
stationary distribution vector $\wt{\vc{\pi}}$ (see
Remark~\ref{rem-pi}) and
\begin{equation}
\lim_{t\to\infty}\wt{\vc{P}}^{(t)}
= \vc{e}\wt{\vc{\pi}}.
\label{lim-P_2^{(t)}}
\end{equation}
Furthermore, the ergodicity of $\wt{\vc{Q}}$ and
(\ref{eqn-pathwise-ordering}) imply that the mean first passage time
of $\{(X_t,J_t)\}$ to each state in $\{(0,i);i\in\bbD\}$ is finite for
any given initial state, which leads to the result that the unique
communicating class $\bbC$ of $\vc{Q}$ is positive recurrent. Therefore, it
follows from \cite[Theorems II.10.1 and II.10.2]{Chun67} and
\cite[Section 5.4, Theorem 4.5]{Ande91} that $\vc{Q}$ has the unique
stationary distribution vector $\vc{\pi}$ and
\begin{equation}
\lim_{t\to\infty}\vc{P}^{(t)}
= \vc{e}\vc{\pi}.
\label{lim-P_1^{(t)}}
\end{equation}
It also follows from Lemma~\ref{lem-Q-3}~(b) that
$\vc{P}^{(t)}\vc{T}_d \le \wt{\vc{P}}^{(t)}\vc{T}_d$ for $t \ge 0$.
From this inequality together with (\ref{lim-P_2^{(t)}}),
(\ref{lim-P_1^{(t)}}) and the dominated convergence theorem, we obtain
$\vc{e}\vc{\pi}\vc{T}_d \le \vc{e}\wt{\vc{\pi}}\vc{T}_d$ and thus
$\vc{\pi}\vc{T}_d \le \wt{\vc{\pi}}\vc{T}_d$.
\end{proofof}

\section{Block-augmented truncations}\label{sec-LBC}

In this section, we discuss the block-augmented truncation of
infinite-order block-structured infinitesimal generators. Thus, we
assume that Assumption~\ref{assumpt-basic-zero} holds for $N =
\infty$, i.e., $\vc{Q}$ is an $|\bbF| \times |\bbF|$ stable and
conservative infinitesimal generator.

We begin with the definition of the block-augmented truncation of
$\vc{Q}$.
\begin{defn}\label{defn-(n)Q_*}
Let
$\presub{n}\vc{Q}_{\ast}=(\presub{n}q_{\ast}(k,i;\ell,j))_{(k,i),(\ell,j)\in\bbF}$
denote an infinitesimal generator such that, for $i,j\in\bbD$,
\begin{align*}
&&
\presub{n}q_{\ast}(k,i;\ell,j) &\ge q(k,i;\ell,j), 
& & k \in \bbZ_+,\ 0 \le \ell \le n, &&
\\
&&
\presub{n}q_{\ast}(k,i;k,j) &= q(k,i;k,j), 
& & k = \ell \ge n+1, &&
\\
&&
\presub{n}q_{\ast}(k,i;\ell,j) &= 0, 
& & k \in \bbZ_+,\ \ell \ge n+1,\ \ell \neq k, &&
\\
&&
\dm\sum_{\ell=0}^{\infty}\presub{n}q_{\ast}(k,i;\ell,j)
 &= \sum_{\ell=0}^{\infty}q(k,i;\ell,j),
&& k \in \bbZ_+. &&
\end{align*}
The infinitesimal generator $\presub{n}\vc{Q}_{\ast}$ is called
a {\it
  block-augmented northwest-corner truncation (block-augmented truncation, for short)} of $\vc{Q}$. 
\end{defn}

Clearly, $\presub{n}\vc{Q}_{\ast}$ has the following form:
\begin{equation}
\presub{n}\vc{Q}_{\ast}
= 
\left(
\begin{array}{c|ccccc}
\presub{n}\vc{Q}_{\ast}^{\leqslant n} 
& \vc{O}
& \vc{O}
& \vc{O}
& \vc{O}
& \cdots
\\
\hline
\ast
& \ast
& \vc{O}
& \vc{O}
& \vc{O}
& \cdots
\\
  \ast
& \vc{O}
& \ast
& \vc{O}
& \vc{O}
& \cdots
\\
  \ast
& \vc{O}
& \vc{O}
& \ast
& \vc{O}
& \cdots
\\
  \ast
& \vc{O}
& \vc{O}
& \vc{O}
& \ast
& \cdots
\\
\vdots
& \vdots
& \vdots
& \vdots
& \vdots
& \ddots
\end{array}
\right),
\label{structure-(n)Q_*}
\end{equation}
where $\presub{n}\vc{Q}_{\ast}^{\leqslant n}$ denotes the
$|\bbF^{\leqslant n}| \times |\bbF^{\leqslant n}|$ northwest-corner of
$\presub{n}\vc{Q}_{\ast}$.  It may seem more reasonable to define
$\presub{n}\vc{Q}_{\ast}^{\leqslant n}$ as a block-augmented
truncation of $\vc{Q}$, instead of
$\presub{n}\vc{Q}_{\ast}$. Nevertheless, we adopt
$\presub{n}\vc{Q}_{\ast}$ in order to perform algebraic operations on
the original infinitesimal generator and its block-augmented
truncation.

For further discussion, we assume that $\vc{Q}$ is irreducible, under
which we present some fundamental results on the block-augmented
truncation.
\begin{lem}\label{lem-(n)Q_*-communicating}
If $\vc{Q}$ is irreducible, then $\presub{n}\vc{Q}_{\ast}$ has no
closed communicating classes in $\bbF^{>n}:=\bbF \setminus
\bbF^{\leqslant n}$.
\end{lem}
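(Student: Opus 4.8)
The plan is to argue by contradiction, exploiting the block pattern of $\presub{n}\vc{Q}_{\ast}$ exhibited in (\ref{structure-(n)Q_*}). The structural fact I would extract first from Definition~\ref{defn-(n)Q_*} is that, from any state $(k,i)$ with $k>n$, the only possible off-diagonal transitions are into $\bbF^{\leqslant n}$ (the entries with $\ell\le n$) or within level $k$ itself (the diagonal block $\ell=k$), because $\presub{n}q_{\ast}(k,i;\ell,j)=0$ whenever $\ell\ge n+1$ and $\ell\neq k$. Suppose then that $C\subseteq\bbF^{>n}$ were a closed communicating class. Since $C$ is closed and disjoint from $\bbF^{\leqslant n}$, no state of $C$ may jump into $\bbF^{\leqslant n}$; together with the structural fact this forces every transition out of a state of $C$ to remain within its own level. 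As the states of a communicating class are mutually reachable through paths in $C$, I would conclude that $C$ lies in a single level, say $C=\{(k,j):j\in S\}$ with $k>n$ and $\emptyset\neq S\subseteq\bbD$.

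Next I would record the two consequences of closedness at the generator level: for every $(k,i)\in C$ the total augmented escape rate into $\bbF^{\leqslant n}$ vanishes, and the within-level rates into level-$k$ states outside $C$ vanish, i.e.\ $\presub{n}q_{\ast}(k,i;k,j)=0$ for $j\notin S$. The heart of the argument is to convert the first of these into a statement about the original $\vc{Q}$. Using the row-sum preservation of Definition~\ref{defn-(n)Q_*}, the identity $\presub{n}q_{\ast}(k,i;k,j)=q(k,i;k,j)$, and the vanishing of $\presub{n}q_{\ast}(k,i;\ell,j)$ for $\ell\ge n+1$, $\ell\neq k$, one obtains the key identity
\begin{equation*}
\sum_{\ell=0}^{n}\sum_{j\in\bbD}\presub{n}q_{\ast}(k,i;\ell,j)
=\sum_{j\in\bbD}\sum_{\ell\neq k}q(k,i;\ell,j),
\end{equation*}
the right-hand sum being over all $\ell\in\bbZ_+$ with $\ell\neq k$. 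Since the left-hand side is zero and every summand on the right is an off-diagonal entry of $\vc{Q}$, hence nonnegative, each must vanish, giving $q(k,i;\ell,j)=0$ for all $\ell\neq k$ and all $j\in\bbD$.

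Finally I would combine the two conclusions: in $\vc{Q}$ the only transitions out of any $(k,i)\in C$ stay in level $k$, and the preserved within-level rates $q(k,i;k,j)=0$ for $j\notin S$ keep them inside $C$. Thus $C$ is a nonempty proper closed set for the original generator $\vc{Q}$, contradicting its irreducibility, which proves the lemma. I expect the only real obstacle to be the bookkeeping behind the key identity — isolating the single surviving high-level term $\ell=k$ in each augmented row and then leaning on nonnegativity of the off-diagonal entries of $\vc{Q}$ to kill every original cross-level rate at once; the reduction to a single level and the closing appeal to irreducibility are then routine.
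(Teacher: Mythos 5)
Your proof is correct and follows essentially the same route as the paper's: reduce the putative closed class to a single level $k>n$ using the block-diagonal structure of $\presub{n}\vc{Q}_{\ast}$ on $\bbF^{>n}$, then use the row-sum preservation in Definition~\ref{defn-(n)Q_*} together with the nonnegativity of off-diagonal entries to conclude that $\vc{Q}$ itself cannot leave that class, contradicting irreducibility. The paper phrases the middle step as the conservativeness of the principal submatrix of $\presub{n}\vc{Q}_{\ast}$ on the class, but the bookkeeping is the same as your key identity.
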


\begin{proof}
We assume that there exists a closed communicating class $\bbC$ in
$\bbF^{>n}$.  Since $\presub{n}\vc{Q}_{\ast}$ is block-diagonal in
$\bbF^{>n}$, the closed communicating class $\bbC$ must be within a
set $\{(k,i);i\in \bbD\}$ for some $k \ge n+1$, which implies that the
{\it principal submatrix}
$(\presub{n}q_{\ast}(k,i;\ell,j))_{(k,i),(\ell,j)\in\bbC}$ of
$\presub{n}\vc{Q}_{\ast}$ is a conservative infinitesimal
generator. From this result and Definition~\ref{defn-(n)Q_*} of
$\presub{n}\vc{Q}_{\ast}$, we have
\begin{equation}
\sum_{(\ell,j)\in\bbC}q(k,i;\ell,j) 
= \sum_{(\ell,j)\in\bbC}\presub{n}q_{\ast}(k,i;\ell,j) = 0,
\qquad(k,i) \in \bbC.
\label{sum-q(k,i;l,j)-in-C}
\end{equation}
Note here that the {\it whole} matrices $\vc{Q}$ and
$\presub{n}\vc{Q}_{\ast}$ are also conservative infinitesimal
generators, i.e., for $(k,i) \in \bbF$,
\begin{eqnarray}
q(k,i;\ell,j) &\ge& 0, \quad \presub{n}q_{\ast}(k,i;\ell,j) \ge 0,
\quad (\ell,j) \in \bbF \setminus \{(k,i)\},
\\
\sum_{(\ell,j)\in\bbF}q(k,i;\ell,j) 
&=& \sum_{(\ell,j)\in\bbF}\presub{n}q_{\ast}(k,i;\ell,j) = 0.
\label{sum-q(k,i;l,j)-in-F}
\end{eqnarray}
It follows from
(\ref{sum-q(k,i;l,j)-in-C})--(\ref{sum-q(k,i;l,j)-in-F}) that
\[
q(k,i;\ell,j) = \presub{n}q_{\ast}(k,i;\ell,j) = 0 \quad \mbox{for
  all $(k,i) \in \bbC$ and $(\ell,j) \in \bbF \setminus \bbC$}.
\]
Therefore, the original Markov chain $\{(X(t),J(t))\}$ with
infinitesimal generator $\vc{Q}$ cannot move out of $\bbC \subset
\bbF^{>n}$. This contradicts to the irreducibility of
$\{(X(t),J(t))\}$. As a result, $\presub{n}\vc{Q}_{\ast}$ has no
closed communicating classes in $\bbF^{>n}$.
\end{proof}

Lemma~\ref{lem-(n)Q_*-communicating} shows that any closed
communicating class of $\presub{n}\vc{Q}_{\ast}$ is finite (because it
is in the finite set $\bbF^{\leqslant n}$) and thus is positive
recurrent due to the combination of \cite[Section~5.1,
  Proposition~1.4]{Ande91} and \cite[Theorem~4.8]{Kulk10}. Therefore,
it follows from \cite[Section~5.4, Theorem~4.5]{Ande91} that
$\presub{n}\vc{Q}_{\ast}$ has at least one stationary distribution
vector.

We now have the following result.
\begin{lem}\label{lem-(n)pi_*(k,i)}
Suppose that $\vc{Q}$ is irreducible. Let
$\presub{n}\vc{\pi}_{\ast}:=(\presub{n}\pi_{\ast}(k,i))_{(k,i)\in\bbF}$
denote an arbitrary stationary distribution vector of
$\presub{n}\vc{Q}_{\ast}$. It then holds that
\begin{equation}
\presub{n}\pi_{\ast}(k,i) = 0
\quad \mbox{for all $(k,i) \in \bbF^{>n}$}.
\label{eqn-(n)pi_*(k,i)=0}
\end{equation}
\end{lem}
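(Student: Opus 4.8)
We have an irreducible, conservative infinitesimal generator $\vc{Q}$, and $\presub{n}\vc{Q}_{\ast}$ is a block-augmented truncation whose off-diagonal block structure (see (\ref{structure-(n)Q_*})) is strictly block-diagonal on the levels $\ell \ge n+1$: from a state $(k,i)$ with $k \ge n+1$, the only nonzero off-diagonal transitions go either to levels $0,\dots,n$ or stay within level $k$ itself. The stationary vector $\presub{n}\vc{\pi}_{\ast}$ solves $\presub{n}\vc{\pi}_{\ast}\,\presub{n}\vc{Q}_{\ast} = \vc{0}$, and we must show it places no mass on $\bbF^{>n}$.

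**The plan.** I would argue directly from the balance equation $\presub{n}\vc{\pi}_{\ast}\,\presub{n}\vc{Q}_{\ast}=\vc{0}$, read off on the columns indexed by a fixed level $k \ge n+1$. The key structural fact from Definition~\ref{defn-(n)Q_*} and (\ref{structure-(n)Q_*}) is that a state $(k,i)$ with $k \ge n+1$ receives inflow only from within its own level $\{(k,j);j\in\bbD\}$: no state in $\bbF^{\leqslant n}$, and no state in a different level $k' \ge n+1$ with $k'\neq k$, has a nonzero transition rate into level $k$. Hence, writing $\presub{n}\vc{\pi}_{\ast}^{(k)}=(\presub{n}\pi_{\ast}(k,i))_{i\in\bbD}$ for the level-$k$ subvector and $\vc{B}_k=(q(k,i;k,j))_{i,j\in\bbD}$ for the diagonal block of $\presub{n}\vc{Q}_{\ast}$ at level $k$ (which, by the second defining property, equals the corresponding diagonal block of $\vc{Q}$), the global balance equation restricted to these columns collapses to the self-contained system $\presub{n}\vc{\pi}_{\ast}^{(k)}\,\vc{B}_k = \vc{0}$ for every $k \ge n+1$.

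**Closing the argument.** It remains to show $\presub{n}\vc{\pi}_{\ast}^{(k)}=\vc{0}$. The matrix $\vc{B}_k$ has nonnegative off-diagonal entries and nonpositive row sums (since the full row of $\presub{n}\vc{Q}_{\ast}$ sums to zero but part of the rate from $(k,i)$ escapes down to $\bbF^{\leqslant n}$, the within-level row sums are $\le 0$, with strict inequality for at least one $i$ because the class is not closed). By Lemma~\ref{lem-(n)Q_*-communicating}, $\presub{n}\vc{Q}_{\ast}$ has no closed communicating class inside $\bbF^{>n}$, so level $k$ is a transient sub-block: from some state $(k,i)$ there is genuine escape to $\bbF^{\leqslant n}$, making $\vc{B}_k$ a strictly sub-conservative (i.e.\ defective) generator. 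Such a matrix is nonsingular, so the only solution of $\presub{n}\vc{\pi}_{\ast}^{(k)}\,\vc{B}_k=\vc{0}$ with $\presub{n}\vc{\pi}_{\ast}^{(k)}\ge\vc{0}$ is $\presub{n}\vc{\pi}_{\ast}^{(k)}=\vc{0}$. Applying this for every $k\ge n+1$ yields (\ref{eqn-(n)pi_*(k,i)=0}).

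**The main obstacle.** The routine part is extracting the decoupled system $\presub{n}\vc{\pi}_{\ast}^{(k)}\vc{B}_k=\vc{0}$; the delicate point is justifying that $\vc{B}_k$ is nonsingular rather than merely sub-conservative. A single level $\{(k,j);j\in\bbD\}$ is a finite $d\times d$ block, and I must use irreducibility of the \emph{original} $\vc{Q}$ (via Lemma~\ref{lem-(n)Q_*-communicating}) to guarantee that mass genuinely leaks out of each such block to $\bbF^{\leqslant n}$. A clean way to handle this uniformly, avoiding a case analysis on which coordinates of $\vc{B}_k$ are defective, is to invoke the probabilistic reformulation: since no state in $\bbF^{>n}$ lies in a closed class, every state in $\bbF^{>n}$ is transient for $\presub{n}\vc{Q}_{\ast}$, and a stationary distribution of a Markov chain assigns zero mass to every transient state. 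That observation, combined with the positive recurrence of the closed classes (all contained in the finite set $\bbF^{\leqslant n}$, as noted after Lemma~\ref{lem-(n)Q_*-communicating}), delivers (\ref{eqn-(n)pi_*(k,i)=0}) immediately.
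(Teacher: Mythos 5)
Your proposal is correct and follows essentially the same route as the paper: both read off the balance equation $\presub{n}\vc{\pi}_{\ast}\,\presub{n}\vc{Q}_{\ast}=\vc{0}$ on the columns of a fixed level $k\ge n+1$, obtain the decoupled system $\presub{n}\vc{\pi}_{\ast}(k)\,\presub{n}\vc{Q}_{\ast}(k;k)=\vc{0}$, and conclude via Lemma~\ref{lem-(n)Q_*-communicating} that the diagonal block is the $q$-matrix of a transient restricted chain and hence nonsingular. Your closing remark about stationary distributions vanishing on transient states is just a probabilistic paraphrase of the same nonsingularity argument, so no substantive difference remains.
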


\begin{proof}
By definition,
\[
\presub{n}\vc{\pi}_{\ast}\presub{n}\vc{Q}_{\ast} = \vc{0}.
\]
Thus, from (\ref{structure-(n)Q_*}) and Definition~\ref{defn-(n)Q_*},
we have
\begin{equation}
\presub{n}\vc{\pi}_{\ast}(k) \presub{n}\vc{Q}_{\ast}(k;k) = \vc{0}
\quad \mbox{for all $k \ge n+1$},
\label{eqn-(n)pi(k)}
\end{equation}
where
$\presub{n}\vc{\pi}_{\ast}(k)=(\presub{n}\pi_{\ast}(k,i))_{i\in\bbD}$
and $\presub{n}\vc{Q}_{\ast}(k;k) = (q(k,i;k,j))_{i,j\in\bbD}$. Note
here that $\presub{n}\vc{Q}_{\ast}(k;k)$, $k \ge n+1$, is the
infinitesimal generator ($q$-matrix) of a Markov chain restricted to
the set of states $\{(k,i);i \in \bbD\} \subset \bbF^{>n}$. Note also
that all the states in $\bbF^{>n}$ are transient, as shown in
Lemma~\ref{lem-(n)Q_*-communicating}. Therefore, for all $k \ge n+1$,
$\presub{n}\vc{Q}_{\ast}(k;k)$ is non-singular \cite[Section
  8.6.2]{Brem99}. Post-multiplying both sides of (\ref{eqn-(n)pi(k)})
by $\presub{n}\vc{Q}_{\ast}(k;k)^{-1}$, we obtain
$\presub{n}\vc{\pi}_{\ast}(k) = \vc{0}$ for all $n \ge k+1$, i.e.,
(\ref{eqn-(n)pi_*(k,i)=0}) holds.
\end{proof}

It follows from Assumption~\ref{assumpt-basic-zero} and
Definition~\ref{defn-(n)Q_*} that $\presub{n}\vc{Q}_{\ast}$ is stable
and conservative. In addition, the irreducibility of $\vc{Q}$ makes
$\presub{n}\vc{Q}_{\ast}$ regular, as stated in the following lemma.
\begin{lem}\label{lem-(n)Q_*-regular}
If $\vc{Q}$ is irreducible, then $\presub{n}\vc{Q}_{\ast}$ is regular.
\end{lem}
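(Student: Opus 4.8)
The plan is to verify the algebraic regularity criterion~(\ref{eqn-Q-regular}) directly: it suffices to show that, for some (hence every) $\gamma>0$, the only $\vc{x}=(x(k,i))_{(k,i)\in\bbF}$ with $\vc{0}\le\vc{x}\le\vc{e}$ solving $\presub{n}\vc{Q}_{\ast}\vc{x}=\gamma\vc{x}$ is $\vc{x}=\vc{0}$. Everything rests on the sparsity pattern~(\ref{structure-(n)Q_*}) dictated by Definition~\ref{defn-(n)Q_*}: no state of $\bbF^{\leqslant n}$ has a transition into $\bbF^{>n}$ (for $k\le n$ and $\ell\ge n+1$ one has $\ell\neq k$, so $\presub{n}q_{\ast}(k,i;\ell,j)=0$), so $\bbF^{\leqslant n}$ is closed; and from any level $k\ge n+1$ the only nonzero off-diagonal rates lead either within the finite set $\{(k,i);i\in\bbD\}$ or down into $\bbF^{\leqslant n}$.

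First I would restrict the equation to the closed finite set $\bbF^{\leqslant n}$. Since $\bbF^{\leqslant n}$ is closed, each row indexed by it is supported in $\bbF^{\leqslant n}$ and sums to zero, so the northwest corner $\presub{n}\vc{Q}_{\ast}^{\leqslant n}$ is itself a conservative $q$-matrix of finite order. A finite $q$-matrix is non-explosive, equivalently all eigenvalues of $\presub{n}\vc{Q}_{\ast}^{\leqslant n}$ have nonpositive real part; hence $\gamma\vc{I}-\presub{n}\vc{Q}_{\ast}^{\leqslant n}$ is non-singular, and the restricted system $\presub{n}\vc{Q}_{\ast}^{\leqslant n}\,\vc{x}^{\leqslant n}=\gamma\,\vc{x}^{\leqslant n}$ forces $x(\ell,j)=0$ for all $(\ell,j)\in\bbF^{\leqslant n}$.

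Next, knowing $\vc{x}$ vanishes on $\bbF^{\leqslant n}$, I would read off the equation for a fixed level $k\ge n+1$. By Definition~\ref{defn-(n)Q_*} the terms pointing into $\bbF^{\leqslant n}$ now vanish and the equation collapses to the finite system $(\gamma\vc{I}-\presub{n}\vc{Q}_{\ast}(k;k))\vc{x}(k)=\vc{0}$, where $\vc{x}(k)=(x(k,i))_{i\in\bbD}$ and $\presub{n}\vc{Q}_{\ast}(k;k)=(q(k,i;k,j))_{i,j\in\bbD}$. Conservativity of $\presub{n}\vc{Q}_{\ast}$ together with the nonnegativity of the augmented down-rates gives $\sum_{j\in\bbD}q(k,i;k,j)\le 0$, so the $i$th row sum of $\gamma\vc{I}-\presub{n}\vc{Q}_{\ast}(k;k)$ equals $\gamma$ plus the nonnegative escape rate of $(k,i)$ into $\bbF^{\leqslant n}$; since the diagonal is positive and the off-diagonal entries are nonpositive, this makes $\gamma\vc{I}-\presub{n}\vc{Q}_{\ast}(k;k)$ strictly row-diagonally dominant, hence non-singular, and therefore $\vc{x}(k)=\vc{0}$. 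Combining the two steps yields $\vc{x}=\vc{0}$, which is the desired conclusion.

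The hard part will be the non-singularity in the second step, i.e.\ ruling out $\gamma>0$ as an eigenvalue of the level block; the strict-diagonal-dominance bookkeeping above is the cleanest way, and it is exactly the manifestation of the transience of $\bbF^{>n}$ proved in Lemma~\ref{lem-(n)Q_*-communicating} (the positive leakage into $\bbF^{\leqslant n}$ is what produces the strict dominance). One may equally invoke the non-singularity of $\presub{n}\vc{Q}_{\ast}(k;k)$ already established in the proof of Lemma~\ref{lem-(n)pi_*(k,i)}, noting that a sub-generator cannot have the positive eigenvalue $\gamma$. As a fully probabilistic alternative, the pattern~(\ref{structure-(n)Q_*}) shows that, started from any $(k_0,i_0)$, the chain governed by $\presub{n}\vc{Q}_{\ast}$ only ever visits the finite set $(\{k_0\}\times\bbD)\cup\bbF^{\leqslant n}$, and a stable continuous-time Markov chain confined to finitely many states is automatically non-explosive.
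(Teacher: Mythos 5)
Your proposal is correct, but it takes a genuinely different route from the paper's. The paper argues by contradiction: it takes a nontrivial solution $\breve{\vc{x}}$ of (\ref{eqn-(n)Q_*x}), kills it on $\bbF^{>n}$ via the nonsingularity of $\presub{n}\vc{Q}_{\ast}(k,k)-\gamma\vc{I}$, locates a state $(k',i')\in\bbF^{\leqslant n}$ with $\breve{x}(k',i')>0$, and then produces a stationary distribution vector $\presub{n}\breve{\vc{\pi}}_{\ast}$ charging that state, so that pre-multiplying (\ref{eqn-(n)Q_*x}) by $\presub{n}\breve{\vc{\pi}}_{\ast}$ yields the contradiction $0=\gamma\cdot\presub{n}\breve{\vc{\pi}}_{\ast}\breve{\vc{x}}>0$. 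You instead show directly that the only admissible solution is $\vc{0}$, by purely linear-algebraic means: the closed finite block $\bbF^{\leqslant n}$ carries a conservative finite $q$-matrix, which cannot have the eigenvalue $\gamma>0$, and each level block $k\ge n+1$ is then handled by strict diagonal dominance. Your version is more elementary — it needs neither the existence of a stationary distribution on a closed class nor \cite[Section 5.4, Theorem 4.5]{Ande91} — and your ordering (eliminating $\vc{x}$ on $\bbF^{\leqslant n}$ \emph{before} treating the levels $k\ge n+1$) is the logically cleaner one: the rows of $\presub{n}\vc{Q}_{\ast}$ at levels $k\ge n+1$ also have entries in the first block column (see (\ref{structure-(n)Q_*})), so the reduction of the level-$k$ equation to $\bigl(\gamma\vc{I}-\presub{n}\vc{Q}_{\ast}(k,k)\bigr)\vc{x}(k)=\vc{0}$ genuinely requires knowing first that $\vc{x}$ vanishes on $\bbF^{\leqslant n}$, a point the published proof passes over. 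What the paper's route buys is the reuse of the communicating-class machinery of Lemmas~\ref{lem-(n)Q_*-communicating} and \ref{lem-(n)pi_*(k,i)} already in place; your probabilistic aside (every trajectory is confined to the finite set $(\{k_0\}\times\bbD)\cup\bbF^{\leqslant n}$, hence no explosion) is also a valid and arguably the shortest argument.
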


\begin{proof}
We assume that $\presub{n}\vc{Q}_{\ast}$ is not regular, i.e., the
equation
\begin{equation}
\presub{n}\vc{Q}_{\ast}\vc{x} = \gamma \vc{x},
\quad \vc{0} \le \vc{x} \le \vc{e}
\label{eqn-(n)Q_*x}
\end{equation}
has a nontrivial solution for some $\gamma > 0$ (see
(\ref{eqn-Q-regular})). Let
$\breve{\vc{x}}:=(\breve{x}(k,i))_{(k,i)\in\bbF} \ge \vc{0},\neq
\vc{0}$ denote such a solution. It then follows from
(\ref{structure-(n)Q_*}) and (\ref{eqn-(n)Q_*x}) that
\[
\left[ \presub{n}\vc{Q}_{\ast}(k,k) - \gamma \vc{I} \right]
\breve{\vc{x}}(k) = \vc{0},
\qquad k \ge n+1,
\]
where $\breve{\vc{x}}(k) = (\breve{x}(k,i))_{i\in\bbD}$ for $k \in
\bbZ_+$.  Since $\presub{n}\vc{Q}_{\ast}(k,k)$ is a $q$-matrix, the
matrix $\presub{n}\vc{Q}_{\ast}(k,k) - \gamma \vc{I}$ is nonsingular
and thus $\breve{\vc{x}}(k) = \vc{0}$ for all $k \ge n+1$. Therefore,
$\breve{x}(k',i') > 0$ for some $(k',i') \in \bbF^{\leqslant n}$ due
to $\breve{\vc{x}} \neq \vc{0}$.

Recall that $\bbF^{\leqslant n}$ is a closed set of the states of
$\presub{n}\vc{Q}_{\ast}$ (see (\ref{structure-(n)Q_*})). Thus, there
exists a closed communicating class including the state $(k',i') \in
\bbF^{\leqslant n}$, which implies that there exists a stationary
distribution vector
$\presub{n}\breve{\vc{\pi}}_{\ast}:=(\presub{n}\breve{\pi}_{\ast}(k,i))_{(k,i)\in\bbF}$
of $\presub{n}\vc{Q}_{\ast}$ such that
$\presub{n}\breve{\pi}_{\ast}(k',i') > 0$ \cite[Section 5.4, Theorem
  4.5]{Ande91}. Therefore, $\presub{n}\breve{\vc{\pi}}_{\ast}
\breve{\vc{x}} \ge \presub{n}\breve{\pi}_{\ast}(k',i')\breve{x}(k',i')
> 0$. On the other hand, pre-multiplying both sides of
(\ref{eqn-(n)Q_*x}) by $\presub{n}\breve{\vc{\pi}}_{\ast}$, we have $0
= \gamma \cdot \presub{n}\breve{\vc{\pi}}_{\ast}\breve{\vc{x}} > 0$,
which is a contradiction. As a result, the assumption at the beginning
is denied, i.e., $\presub{n}\vc{Q}_{\ast}$ is regular.
\end{proof}

We consider two special cases of the block-augmented truncation.  Let
$\presub{n}\vc{Q}_n=(\presub{n}q_n(k,i;\ell,j))_{(k,i),(\ell,j)\in\bbF}$
denote an infinitesimal generator such that, for $i,j\in\bbD$,
\begin{eqnarray}
\lefteqn{
\presub{n}q_n(k,i;\ell,j)
}
\quad &&
\nonumber
\\
&=& 
\left\{
\begin{array}{ll}
q(k,i;\ell,j), & k \in \bbZ_+,\ 0 \le \ell \le n-1,
\\
q(k,i;n,j)
+ \dm\sum_{m>n,\, m \neq k} q(k,i;m,j), & k \in \bbZ_+,\ \ell = n,
\\
q(k,i;k,j), & k = \ell \ge n+1,
\\
0, & \mbox{otherwise}.
\end{array}
\right.
\label{def-trunc{q}_n(k,i;l,j)}
\end{eqnarray}
 Let
 $\presub{n}\vc{Q}_0=(\presub{n}q_0(k,i;\ell,j))_{(k,i),(\ell,j)\in\bbF}$
 denote an infinitesimal generator such that, for $i,j\in\bbD$,
\begin{eqnarray}
\lefteqn{
\presub{n}q_0(k,i;\ell,j)
}
\quad &&
\nonumber
\\
&=& 
\left\{
\begin{array}{ll}
q(k,i;0,j) + \dm\sum_{m>n,\, m \neq k} q(k,i;m,j), 
& k \in \bbZ_+,\ \ell=0,
\\
q(k,i;\ell,j), & k \in \bbZ_+,\ 1 \le \ell \le n,
\\
q(k,i;k,j), & k = \ell \ge n+1,
\\
0, & \mbox{otherwise}.
\end{array}
\right.
\label{def-trunc{q}_0(k,i;l,j)}
\end{eqnarray}
We refer to $\presub{n}\vc{Q}_n$ as the {\it last-column-block-augmented
  northwest-corner truncation (LC-block-augmented truncation, for short)}
of $\vc{Q}$. We also refer to $\presub{n}\vc{Q}_0$ as the {\it
  first-column-block-augmented northwest-corner truncation (FC-block-augmented truncation, for short)} of $\vc{Q}$.

Let $\presub{n}\vc{\pi}_n:=(\presub{n}\pi_n(k,i))_{(k,i)\in\bbF}$ and
$\presub{n}\vc{\pi}_0:=(\presub{n}\pi_0(k,i))_{(k,i)\in\bbF}$ denote
the stationary distribution vectors of $\presub{n}\vc{Q}_n$ and
$\presub{n}\vc{Q}_0$, respectively. It then follows from
Lemma~\ref{lem-(n)pi_*(k,i)} that
\[
\presub{n}\pi_n(k,i) = 
\presub{n}\pi_0(k,i) = 0
\quad \mbox{for all $(k,i) \in \bbF^{>n}$}.
\]
The following theorem is a generalization of \cite[Theorem~3.6]{LiHai00}.
\begin{thm}\label{thm-block-augmentation}
If $\vc{Q}$ is ergodic (i.e., irreducible and positive recurrent) and
$\vc{Q} \in \sfBM_d$, then the following are true:
\begin{enumerate}
\item An arbitrary block-augmented truncation
  $\presub{n}\vc{Q}_{\ast}$ has the unique stationary distribution
  vector $\presub{n}\vc{\pi}_{\ast}$ and
\begin{equation}
\presub{n}\vc{\pi}_0 \prec_d 
\presub{n}\vc{\pi}_{\ast} \prec_d 
\presub{n}\vc{\pi}_n \prec_d 
\vc{\pi},\qquad n \in \bbN.
\label{order-pi's}
\end{equation}
\item As $n \to \infty$, $\{\presub{n}\vc{\pi}_{\ast}\}$ converges to
  $\vc{\pi}$ elementwise, i.e.,
\begin{equation}
\lim_{n\to\infty}\presub{n}\vc{\pi}_{\ast} = \vc{\pi}.
\label{lim-(n)pi_n}
\end{equation}
\end{enumerate}
\end{thm}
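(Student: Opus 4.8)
The plan is to prove the two statements in sequence, leveraging the machinery of block-monotonicity and block-wise dominance developed in Lemmas~\ref{lem-Q}--\ref{lem-Q-4}. First, for the uniqueness claim in statement~(a): since $\vc{Q}$ is irreducible, Lemma~\ref{lem-(n)Q_*-regular} gives that $\presub{n}\vc{Q}_{\ast}$ is regular, and the discussion following Lemma~\ref{lem-(n)Q_*-communicating} guarantees at least one stationary distribution vector. By Lemma~\ref{lem-(n)Q_*-communicating}, all closed communicating classes of $\presub{n}\vc{Q}_{\ast}$ lie in $\bbF^{\leqslant n}$; I would argue that irreducibility of $\vc{Q}$ forces this class to be unique, so that $\presub{n}\vc{\pi}_{\ast}$ is uniquely determined. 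The heart of statement~(a) is the chain of dominance relations in \eqref{order-pi's}. The natural strategy is to apply Lemma~\ref{lem-Q-4}~(b) three times, each time with a suitable pairing of a ``dominated'' and a ``dominating'' generator. Concretely, I expect to verify that $\presub{n}\vc{Q}_0 \prec_d \presub{n}\vc{Q}_{\ast} \prec_d \presub{n}\vc{Q}_n \prec_d \vc{Q}$ by checking the defining inequality $\vc{Q}^{(1)}\vc{T}_d \le \vc{Q}^{(2)}\vc{T}_d$ directly from Definitions~\ref{defn-(n)Q_*}--\ref{def-trunc{q}_0(k,i;l,j)}; the $\presub{n}\vc{Q}_0$ and $\presub{n}\vc{Q}_n$ cases are the extremal configurations of the augmentation, placing the surplus mass in the first and last retained column-blocks respectively.

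To invoke Lemma~\ref{lem-Q-4}~(b) I must supply, for each pair, the hypotheses of Assumption~\ref{assumpt-Q_1-Q_2}, namely block-wise dominance together with block-monotonicity of at least one member of the pair, plus regularity and positive recurrence of the dominating generator. For the rightmost comparison $\presub{n}\vc{Q}_n \prec_d \vc{Q}$, the dominating generator is $\vc{Q}$ itself, which is ergodic (hence regular by Remark~\ref{rem-regular}) and block-monotone by hypothesis, so the lemma applies and yields $\presub{n}\vc{\pi}_n \prec_d \vc{\pi}$. The genuinely delicate point, and what I anticipate will be the main obstacle, is establishing block-monotonicity of the truncations $\presub{n}\vc{Q}_n$ (and the dominance structure among the truncations themselves), because their modified last column-block need not obviously inherit the monotone structure of $\vc{Q}$. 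Here I would exploit that $\vc{Q} \in \sfBM_d$ means the partial tail sums $\sum_{m=\ell}^{\infty} q(k,i;m,j)$ are nondecreasing in $k$, and show that the augmentation operation preserves the relevant tail-sum inequalities; the LC-augmentation in particular is designed so that the reflected mass lands in the last column, which is compatible with the block-increasing structure, making $\presub{n}\vc{Q}_n$ the tightest dominating truncation.

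Finally, for statement~(b), the convergence $\lim_{n\to\infty}\presub{n}\vc{\pi}_{\ast} = \vc{\pi}$, I would use the squeeze provided by \eqref{order-pi's}: since $\presub{n}\vc{\pi}_0 \prec_d \presub{n}\vc{\pi}_{\ast} \prec_d \vc{\pi}$ for all $n$, it suffices to show that the lower and upper bounding sequences converge to $\vc{\pi}$, or more directly that any block-wise limit is pinched to $\vc{\pi}$. Because $\presub{n}\pi_{\ast}(k,i) = 0$ for $(k,i) \in \bbF^{>n}$ by Lemma~\ref{lem-(n)pi_*(k,i)}, the truncated generators converge entrywise to $\vc{Q}$ as $n \to \infty$, and I would combine this with the monotone dominance to pass to the limit. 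The technical care needed is to rule out escape of mass to infinity, i.e.\ to confirm that $\{\presub{n}\vc{\pi}_{\ast}\}$ has no defective limit; the dominance $\presub{n}\vc{\pi}_{\ast} \prec_d \vc{\pi}$ controls the tail behaviour uniformly in $n$ and should close this gap, yielding \eqref{lim-(n)pi_n}.
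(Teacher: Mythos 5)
Your architecture for statement~(a) matches the paper's for the outer comparisons: the paper likewise verifies $\presub{n}\vc{Q}_0 \prec_d \presub{n}\vc{Q}_{\ast} \prec_d \presub{n}\vc{Q}_n \prec_d \vc{Q}$ directly from the definitions, notes that $\vc{Q}\in\sfBM_d$ implies $\presub{n}\vc{Q}_n\in\sfBM_d$, and then invokes Lemma~\ref{lem-Q-4}~(b). But there is a genuine gap at the first link of your chain. Lemma~\ref{lem-Q-4}~(b) requires Assumption~\ref{assumpt-Q_1-Q_2}, i.e.\ that at least one member of the dominance pair be block-monotone. The paper therefore applies it only to the pairs $(\presub{n}\vc{Q}_0,\presub{n}\vc{Q}_n)$, $(\presub{n}\vc{Q}_{\ast},\presub{n}\vc{Q}_n)$ and $(\presub{n}\vc{Q}_n,\vc{Q})$, in each of which the dominating member is block-monotone; this yields the uniqueness claims together with $\presub{n}\vc{\pi}_0 \prec_d \presub{n}\vc{\pi}_n \prec_d \vc{\pi}$ and $\presub{n}\vc{\pi}_{\ast} \prec_d \presub{n}\vc{\pi}_n$. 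For the pair $(\presub{n}\vc{Q}_0,\presub{n}\vc{Q}_{\ast})$ --- which is what you need for $\presub{n}\vc{\pi}_0 \prec_d \presub{n}\vc{\pi}_{\ast}$ --- \emph{neither} generator is block-monotone in general: an arbitrary augmentation $\presub{n}\vc{Q}_{\ast}$ has no reason to lie in $\sfBM_d$, and the FC-augmentation $\presub{n}\vc{Q}_0$ destroys the increasing-tail-sum property because the mass it relocates to level $0$, namely $\sum_{m>n}q(k,i;m,j)$, is itself nondecreasing in $k$. So Lemma~\ref{lem-Q-4} is unavailable there, and your plan offers no substitute. The paper closes this link by a separate finite-dimensional argument: since by Lemma~\ref{lem-(n)pi_*(k,i)} both stationary vectors are supported on $\bbF^{\leqslant n}$, one compares $\exp\{\presub{n}\vc{Q}_0^{\leqslant n}t\}\vc{T}_d^{\leqslant n}$ with $\exp\{\presub{n}\vc{Q}_{\ast}^{\leqslant n}t\}\vc{T}_d^{\leqslant n}$ via uniformization as in the derivation of (\ref{ineqn-exp{Q_1^{<n}}-exp{Q_2^{<n}}}) and then lets $t\to\infty$. (By contrast, the point you single out as ``genuinely delicate'' --- that $\presub{n}\vc{Q}_n\in\sfBM_d$ --- is the easy part.)

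For statement~(b), the squeeze idea does not quite work as stated: $\prec_d$ only compares tail sums, and proving $\presub{n}\vc{\pi}_0\to\vc{\pi}$ is no easier than the general claim. Your fallback --- tightness from $\presub{n}\vc{\pi}_{\ast}\prec_d\vc{\pi}$ and identification of subsequential limits --- is indeed the paper's route, but be aware that you cannot pass to the limit in $\presub{n}\vc{\pi}_{\ast}\,\presub{n}\vc{Q}_{\ast}=\vc{0}$ using only entrywise convergence of the (unbounded) generators. The paper instead passes to the limit in $\presub{n}\vc{\pi}_{\ast}\,\presub{n}\vc{P}_{\ast}^{(t)}=\presub{n}\vc{\pi}_{\ast}$, using the convergence $\presub{n}\vc{P}_{\ast}^{(t)}\to\vc{P}^{(t)}$ from Hart and Tweedie and Fatou's lemma to conclude that any subsequential limit is a subinvariant probability vector of $\vc{P}^{(t)}$, which by ergodicity must equal $\vc{\pi}$. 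You would need to supply this (or an equivalent) identification step to complete the argument.
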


\begin{proof}
We first prove statement (a).  Definition~\ref{defn-(n)Q_*},
(\ref{def-trunc{q}_n(k,i;l,j)}) and (\ref{def-trunc{q}_0(k,i;l,j)})
imply that
\begin{equation*}
\presub{n}\vc{Q}_0 \prec_d 
\presub{n}\vc{Q}_{\ast} \prec_d 
\presub{n}\vc{Q}_n \prec_d \vc{Q},
\qquad n \in \bbN,
\end{equation*}
which leads to 
\[
\presub{n}\vc{Q}_0 \prec_d 
\presub{n}\vc{Q}_n \prec_d 
\vc{Q},
\quad 
\presub{n}\vc{Q}_{\ast} \prec_d 
\presub{n}\vc{Q}_n \prec_d 
\vc{Q},\quad n \in \bbN.
\]
Note here that $\vc{Q} \in \sfBM_d$ implies $\presub{n}\vc{Q}_n \in
\sfBM_d$ for all $n \in \bbN$.  It follows from these results and
Lemma~\ref{lem-Q-4}~(b) that $\presub{n}\vc{\pi}_0$,
$\presub{n}\vc{\pi}_{\ast}$ and $\presub{n}\vc{\pi}_n$ are the unique
stationary distribution vectors of $\presub{n}\vc{Q}_0$,
$\presub{n}\vc{Q}_{\ast}$ and $\presub{n}\vc{Q}_n$, respectively, and
\begin{equation*}
\presub{n}\vc{\pi}_0 \prec_d 
\presub{n}\vc{\pi}_n \prec_d 
\vc{\pi},
\quad 
\presub{n}\vc{\pi}_{\ast} \prec_d 
\presub{n}\vc{\pi}_n \prec_d 
\vc{\pi},\quad n \in \bbN.
\end{equation*}
Therefore, it remains to prove that $\presub{n}\vc{\pi}_0 \prec_d
\presub{n}\vc{\pi}_{\ast}$ for $n \in \bbN$. To this end, we define
$\presub{n}\vc{Q}_0^{\leqslant n}$ and
$\presub{n}\vc{Q}_{\ast}^{\leqslant n}$ as the $|\bbF^{\leqslant n}|
\times |\bbF^{\leqslant n}|$ northwest-corner truncations of
$\presub{n}\vc{Q}_0$ and $\presub{n}\vc{Q}_{\ast}$, respectively. We
also define $\presub{n}\vc{\pi}_0^{\leqslant n} =
(\presub{n}\pi_0(k,i))_{(k,i)\in\bbF^{\leqslant n}}$ and
$\presub{n}\vc{\pi}_{\ast}^{\leqslant n} =
(\presub{n}\pi_{\ast}(k,i))_{(k,i)\in\bbF^{\leqslant n}}$,
respectively. Lemma~\ref{lem-(n)pi_*(k,i)} implies that
$\presub{n}\vc{\pi}_0^{\leqslant n}$ and
$\presub{n}\vc{\pi}_{\ast}^{\leqslant n}$ are the unique stationary
distribution vectors of $\presub{n}\vc{Q}_0^{\leqslant n}$ and
$\presub{n}\vc{Q}_{\ast}^{\leqslant n}$, respectively.  Note here that
$\presub{n}\vc{Q}_0^{\leqslant n} \vc{T}_d^{\leqslant n} \le
\presub{n}\vc{Q}_{\ast}^{\leqslant n}\vc{T}_d^{\leqslant n}$ for $n
\in \bbN$. Thus, proceeding as in the derivation of
(\ref{ineqn-exp{Q_1^{<n}}-exp{Q_2^{<n}}}), we can readily show that,
for $n \in \bbN$,
\[
\exp\{ \presub{n}\vc{Q}_0^{\leqslant n} t\} \vc{T}_d^{\leqslant n}
\exp\{ \presub{n}\vc{Q}_{\ast}^{\leqslant n} t\}\vc{T}_d^{\leqslant n},
\qquad t \ge 0.
\]
Letting $t \to \infty$ in the above inequality, we have
\[
\vc{e} \cdot \presub{n}\vc{\pi}_0^{\leqslant n}\vc{T}_d^{\leqslant n} \le
\vc{e} \cdot \presub{n}\vc{\pi}_{\ast}^{\leqslant n}\vc{T}_d^{\leqslant n},
\qquad n \in \bbN,
\]
which shows that $\presub{n}\vc{\pi}_0 \prec_d
\presub{n}\vc{\pi}_{\ast}$ for $n \in \bbN$. Consequently, statement
(a) has been proved.

Next, we prove statement (b). It follows from statement (a) that, for
$n \in \bbN$, $\presub{n}\vc{\pi}_{\ast} \prec_d \vc{\pi}$, that is,
\[
\sum_{k=\ell}^{\infty}\presub{n}\pi_{\ast}(k,i) \le
\sum_{k=\ell}^{\infty}\pi(k,i),\qquad (\ell,j) \in \bbF.
\]
Therefore, for any $\varepsilon \in (0,1)$, there
exists some $k_{\varepsilon} \in \bbZ_+$ such that
\begin{equation*}
\sum_{(k,i) \in \bbF^{>k_{\varepsilon}}} \presub{n}\pi_{\ast}(k,i) 
< \varepsilon
\quad \mbox{for all $n \in \bbN$},
\end{equation*}
which shows that
$\presub{n}\vc{\Pi}_{\ast}:=\{\presub{n}\vc{\pi}_{\ast};n\in\bbN\}$ is
{\it tight} and thus {\it relatively compact} (see, e.g., \cite[Theorem 5.1]{Bill99}), i.e., there exists a convergent subsequence of
$\presub{n}\vc{\Pi}_{\ast}$. 

Let $\{\presub{n_m}\vc{\pi}_{\ast};m\in\bbZ_+\}$
denote an arbitrary convergent subsequence of
$\presub{n}\vc{\Pi}_{\ast}$ such that
\begin{equation}
\lim_{m\to\infty}\presub{n_m}\vc{\pi}_{\ast} = \vc{\pi}_{\ast},
\label{lim-(n_m)pi_*}
\end{equation}
where $\vc{\pi}_{\ast}:=(\pi_{\ast}(k,i))_{(k,i) \in \bbF}$ is a
probability vector. Furthermore, let
$\presub{n}\vc{P}_{\ast}^{(t)}:= \break (\presub{n}p_{\ast}^{(t)}(k,i;\ell,j))_{(k,i),(\ell,j)
  \in \bbF}$ denote the transition matrix function of the
infinitesimal generator $\presub{n}\vc{Q}_{\ast}$. By definition,
\begin{equation}
\presub{n_m}\vc{\pi}_{\ast}\, \presub{n_m}\vc{P}_{\ast}^{(t)}
= \presub{n_m}\vc{\pi}_{\ast},\qquad t \ge 0.
\label{eqn-(n_m)pi_*}
\end{equation}
It also follows from \cite[Theorem 2.1 and Remark 2.2]{Hart12} that
\begin{equation}
\lim_{n\to\infty}\presub{n}\vc{P}_{\ast}^{(t)} = \vc{P}^{(t)},
\qquad t \ge 0.
\label{lim-(n)P_0^{(t)}}
\end{equation}
Applying Fatou's lemma, (\ref{lim-(n_m)pi_*}) and
(\ref{lim-(n)P_0^{(t)}}) to (\ref{eqn-(n_m)pi_*}), we have
\begin{equation}
\vc{\pi}_{\ast} \vc{P}^{(t)}
\le \vc{\pi}_{\ast}, \qquad t \ge 0,
\label{ineqn-pi_*}
\end{equation}
which shows that $\vc{\pi}_{\ast}$ is the subinvariant probability
vector of $\vc{P}^{(t)}$.  Recall here that $\vc{Q}$ is ergodic. The
ergodicity of $\vc{Q}$ together with (\ref{ineqn-pi_*}) implies that
$\vc{\pi}_{\ast}$ is the unique probability vector satisfying
$\vc{\pi}_{\ast} \vc{P}^{(t)} = \vc{\pi}_{\ast}$ \cite[Section 5.2,
  Theorem 2.8]{Ande91}. Therefore, $\vc{\pi}_{\ast} = \vc{\pi}$ (see
Definition~\ref{defn-pi} and Remark~\ref{rem-pi}).  This result and
\cite[Corollary of Theorem 5.1]{Bill99} yield
\[
\lim_{n\to\infty}\presub{n}\vc{\pi}_{\ast} = \vc{\pi}.
\]
As a result, we have (\ref{lim-(n)pi_n}).
\end{proof}

Theorem~\ref{thm-block-augmentation} shows that
$\{\presub{n}\vc{\pi}_n\}$, $\{\presub{n}\vc{\pi}_0\}$ and
$\{\presub{n}\vc{\pi}_{\ast}\}$ can be approximations to
$\vc{\pi}$. In addition, it follows from (\ref{order-pi's}) that, for
all $m \in \bbZ_+$,
\begin{eqnarray}
0 \le 
\sum_{k=0}^m \sum_{i\in\bbD} \{ \presub{n}\pi_n(k,i) - \pi(k,i) \}
&\le& \sum_{k=0}^m \sum_{i\in\bbD} \{ \presub{n}\pi_{\ast}(k,i) -\pi(k,i) \}
\nonumber
\qquad
\\
&\le& \sum_{k=0}^m \sum_{i\in\bbD} \{ \presub{n}\pi_0(k,i) - \pi(k,i) \}.
\label{ineqn-pi's}
\end{eqnarray}
Therefore, we can say that the LC- (resp.\ FC-) block-augmented truncation of
$\vc{Q} \in \sfBM_d$ is the best (resp.\ worst) among all the block-augmented
truncations of $\vc{Q}$ in the sense shown in (\ref{ineqn-pi's}).

\section{Error bounds for last-column-block-augmented truncations}\label{sec-bounds}

In the previous section, we already have shown that the stationary
distribution vector $\{\presub{n}\vc{\pi}_n\}$ of the
LC-block-augmented truncation $\presub{n}\vc{Q}_n$ is the best
approximation to the stationary distribution vector $\vc{\pi}$ of
$\vc{Q}\in \sfBM_d$. In this section, we do not necessarily assume
$\vc{Q} \in \sfBM_d$, but assume that $\vc{Q}$ is block-wise dominated
by another generator $\wt{\vc{Q}} \in \sfBM_d$, which is possibly
equal to $\vc{Q}$. We then present upper bounds for the total
variation distance between $\presub{n}\vc{\pi}_n$ and $\vc{\pi}$,
i.e.,
\[
\| \presub{n}\vc{\pi}_n - \vc{\pi} \|
:= \sum_{(k,i)\in\bbF} |\presub{n}\pi_n(k,i) - \pi(k,i) |,
\]
where $\| \cdot \|$ denotes the total variation norm. 

We begin with the introduction of some definitions and assumptions for
the subsequent discussion. For any $1 \times |\bbF|$ vector
$\vc{x}=(x(k,i))_{(k,i)\in\bbF}$, let $\left\| \vc{x}
\right\|_{\svc{v}}$ denote
\[
\left\| \vc{x} \right\|_{\svc{v}} 
:= \sup_{|\svc{g}| \le \svc{v}} 
\left| \sum_{(k,i)\in\bbF}x(k,i)g(k,i) \right|
= \sup_{\svc{0} \le \svc{g} \le \svc{v}}\sum_{(k,i)\in\bbF}|x(k,i)|g(k,i),
\]
where $|\vc{g}|$ is a column vector obtained by taking the absolute
value of each element of $\vc{g}$. The quantity $\left\| \vc{x}
\right\|_{\svc{v}}$ is called the {\it $\vc{v}$-norm of
  $\vc{x}$}. Note here that $\| \cdot \|_{\vc{e}} = \| \cdot \|$,
i.e., the $\vc{e}$-norm is equivalent to the total variation norm. Let
$\dd{I}_{\{ \cdot \}}$ denote a function that takes value one if the
statement in the braces is true and takes value zero otherwise. For $K
\in \bbZ_+$, let $\vc{1}_K=(1_K(k,i))_{(k,i)\in\bbF}$ denote a column
vector such that
\[
1_K(k,i)= \left\{
\begin{array}{ll}
1, & (k,i) \in \bbF^{\leqslant K},
\\
0, & (k,i) \in \bbF^{> K}.
\end{array}
\right.
\]
Furthermore, let $\{(\presub{n}X_t,\presub{n}J_t);t\ge0\}$ denote a
Markov chain with infinitesimal generator $\presub{n}\vc{Q}_n$, and
let $\presub{n}\vc{p}_n^{(t)}(k,i) =
(\presub{n}p_n^{(t)}(k,i;\ell,j))_{(\ell,j)\in\bbF}$ denote a probability
vector such that
\begin{equation*}
\presub{n}p_n^{(t)}(k,i;\ell,j) 
=\PP(\presub{n}X_t = \ell, \presub{n}J_t = j 
\mid \presub{n}X_0 = k, \presub{n}J_0 = i).
\end{equation*}
It follows from (\ref{def-trunc{q}_n(k,i;l,j)}) that 
\begin{equation}
\presub{n}p_n^{(t)}(k,i;\ell,j) = 0,
\qquad t \ge 0,\ (k,i) \in \bbF^{\leqslant n},\ (\ell,j) \in \bbF^{>n}.
\label{eqn-(n)p_n^{(u)}=0}
\end{equation}
Finally, we make the following assumptions.
\begin{assumpt}\label{assumpt-basic}
(i) $\vc{Q} \prec_d \wt{\vc{Q}}$; (ii) $\wt{\vc{Q}} \in \sfBM_d$ and
  $\wt{\vc{Q}}$ is irreducible.
\end{assumpt}

\begin{assumpt}\label{assumpt-geo}
There exist some constants $c,b \in (0, \infty)$ and column vector
$\vc{v} = \break (v(k,i))_{(k,i)\in\bbF} \in \sfBI_d$ with $\vc{v} \ge \vc{e}$
such that
\begin{equation}
\wt{\vc{Q}}\vc{v} \le -c \vc{v} + b\vc{1}_0.
\label{ineqn-hat{Q}v}
\end{equation}
\end{assumpt}

Clearly, Assumption~\ref{assumpt-basic} implies
Assumption~\ref{assumpt-Q_1-Q_2}. In addition,
Assumption~\ref{assumpt-geo}, together with
Assumption~\ref{assumpt-basic}, implies that $\wt{\vc{Q}}$ is
exponentially ergodic \cite[Theorem 20.3.2]{Meyn09}. Thus, it follows
from Lemma~\ref{lem-Q-4} that $\wt{\vc{Q}}$ and $\vc{Q}$ have the
unique stationary distribution vectors $\wt{\vc{\pi}}$ and $\vc{\pi}$,
respectively, such that $\vc{\pi} \prec_d\wt{\vc{\pi}}$. We now define
$\vc{\varpi}=(\varpi(i))_{i\in\bbD}$ as a $1 \times d$ probability
vector such that $\varpi(i) = \sum_{k=0}^{\infty}\pi(k,i)$ for $i \in
\bbD$. It then follows from Lemma~\ref{lem-Q-2} that $\vc{\varpi}$ is
the common stationary distribution vector of the Markov chains
$\{J_t\}$ and $\{\wt{J}_t\}$ and
\begin{equation}
\varpi(i) = \sum_{k=0}^{\infty}\pi(k,i) 
= \sum_{k=0}^{\infty}\wt{\pi}(k,i),
\qquad i \in \bbD.
\label{eqn-varpi}
\end{equation}

In what follows, we estimate $\| \presub{n}\vc{\pi}_n -
\vc{\pi}\|$. By the triangular inequality,
\begin{eqnarray}
\left\| \presub{n}\vc{\pi}_n - \vc{\pi} \right\|
&\le&
\left\| \vc{p}^{(t)}(0,\vc{\varpi}) - \vc{\pi} \right\|
+ \left\| \presub{n}\vc{p}_n^{(t)}(0,\vc{\varpi}) - \presub{n}\vc{\pi}_n \right\|
\nonumber
\\
&& ~~{} + 
\left\| \presub{n}\vc{p}_n^{(t)}(0,\vc{\varpi}) - \vc{p}^{(t)}(0,\vc{\varpi}) 
\right\|,~~~~~n \in \bbN,\ t \ge 0,
\label{eqn-27}
\end{eqnarray}
where $\vc{p}^{(t)}(k,i) = (p^{(t)}(k,i;\ell,j))_{(\ell,j)\in\bbF}$
and, for any function $\varphi$ on $\bbF$, 
\[
\varphi(k,\vc{\varpi}) =
\sum_{i\in\bbD} \varpi(i)\varphi(k,i), \quad k \in \bbZ_+.
\]

The following lemma provides upper bounds for the first and second
terms in the right hand side of (\ref{eqn-27}).
\begin{lem}\label{lem-geo-2}
If Assumptions~\ref{assumpt-basic} and \ref{assumpt-geo} hold, then
the following inequalities hold for all $k \in \bbZ_+$ and $t \ge 0$:
\begin{equation}
\left\|  \vc{p}^{(t)}(k,\vc{\varpi}) - \vc{\pi}  \right\|_{\svc{v}}
\le 2\rme^{-ct} \left[v(k,\vc{\varpi})(1 - 1_0(k,\vc{\varpi})) 
+ b/c
\right],
\label{eqn-33a}
\end{equation}
and, for $~n \in \bbN$, 
\begin{eqnarray}
\left\|  
\presub{n}\vc{p}_n^{(t)}(k,\vc{\varpi}) - \presub{n}\vc{\pi}_n  
\right\|_{\svc{v}}
&\le& 2\rme^{-ct} \left[v(k,\vc{\varpi})(1 - 1_0(k,\vc{\varpi})) 
+ b/c
\right].
\label{eqn-33b}
\end{eqnarray}
\end{lem}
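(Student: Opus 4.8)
The plan is to prove both inequalities at once, by extracting a single \emph{generic} statement: if $\vc{Q}^{\dagger}$ is an ergodic generator that satisfies the geometric drift condition $\vc{Q}^{\dagger}\vc{v}\le -c\vc{v}+b\vc{1}_0$, has an autonomous phase process with stationary law $\vc{\varpi}$, and possesses a stationary distribution whose phase marginal is $\vc{\varpi}$, then the time-$t$ law started from level $k$ with phase distribution $\vc{\varpi}$ satisfies the bound $2\rme^{-ct}[v(k,\vc{\varpi})(1-1_0(k,\vc{\varpi}))+b/c]$. Specializing $\vc{Q}^{\dagger}=\vc{Q}$ gives (\ref{eqn-33a}) and $\vc{Q}^{\dagger}=\presub{n}\vc{Q}_n$ gives (\ref{eqn-33b}); below I write the argument for $\vc{Q}$, the case of $\presub{n}\vc{Q}_n$ being verbatim. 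The first step is to transfer the drift condition from $\wt{\vc{Q}}$ to these generators. Since $\vc{v}\in\sfBI_d$, the vector $\vc{w}:=\vc{T}_d^{-1}\vc{v}$ is nonnegative, so $\vc{Q}\vc{v}=(\vc{Q}\vc{T}_d)\vc{w}\le(\wt{\vc{Q}}\vc{T}_d)\vc{w}=\wt{\vc{Q}}\vc{v}\le -c\vc{v}+b\vc{1}_0$ by Assumption~\ref{assumpt-basic}~(i) and Assumption~\ref{assumpt-geo}; because Theorem~\ref{thm-block-augmentation} yields $\presub{n}\vc{Q}_n\prec_d\vc{Q}\prec_d\wt{\vc{Q}}$, the same computation gives $\presub{n}\vc{Q}_n\vc{v}\le -c\vc{v}+b\vc{1}_0$. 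Moreover Lemma~\ref{lem-Q-2}, applied to $\vc{Q}\prec_d\wt{\vc{Q}}$ and to $\presub{n}\vc{Q}_n\prec_d\wt{\vc{Q}}$, shows that the phase processes are autonomous with generator $\vc{\Xi}$ and stationary law $\vc{\varpi}$, and that $\vc{\pi}$ and $\presub{n}\vc{\pi}_n$ (the latter supported on $\bbF^{\leqslant n}$ by (\ref{eqn-(n)p_n^{(u)}=0})) both have phase marginal $\vc{\varpi}$, cf.\ (\ref{eqn-varpi}).

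The second step integrates the drift. From the Kolmogorov forward equation (\ref{eqn-FKM-DEQ}) and the transferred drift I would obtain the differential inequality $\frac{\rmd}{\rmd t}\vc{P}^{(t)}\vc{v}=\vc{P}^{(t)}\vc{Q}\vc{v}\le -c\,\vc{P}^{(t)}\vc{v}+b\,\vc{P}^{(t)}\vc{1}_0\le -c\,\vc{P}^{(t)}\vc{v}+b\vc{e}$, and then a Gr\"onwall argument yields $\vc{P}^{(t)}\vc{v}\le \rme^{-ct}\vc{v}+(b/c)(1-\rme^{-ct})\vc{e}$; letting $t\to\infty$ under stationarity gives $\vc{\pi}\vc{v}\le b/c$, which is the origin of the $b/c$ term. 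I would also record the sharper \emph{taboo} estimate: off level $0$ the drift reads $\vc{Q}\vc{v}\le -c\vc{v}$, so $\rme^{cs}v(X_s,J_s)$ is a nonnegative supermartingale up to $\tau_0:=\inf\{s:X_s=0\}$, whence $\EE_{(k,i)}[v(X_t,J_t)\dd{I}_{\{\tau_0>t\}}]\le \rme^{-ct}v(k,i)$.

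The third step is the coupling. I would couple the chain started from $(k,\vc{\varpi})$ with a stationary copy, sharing the (autonomous) phase trajectory and using a synchronous jump construction for the levels, so that the two level processes, once equal at a meeting time $\tau$, stay equal thereafter. The $\vc{v}$-norm coupling inequality then gives $\|\vc{p}^{(t)}(k,\vc{\varpi})-\vc{\pi}\|_{\svc{v}}\le \EE[(v(X_t,J_t)+v(X_t^{\ast},J_t))\dd{I}_{\{\tau>t\}}]$, where the superscript $\ast$ denotes the stationary copy. Bounding the two summands by the estimates of the second step produces the prefactor $2$, the decaying term $\rme^{-ct}v(k,\vc{\varpi})$ on $\{k\ge1\}$ (the non-stationary copy must first reach level $0$, handled by the taboo supermartingale), and the decaying term $\rme^{-ct}b/c$ (the stationary copy, using $\vc{\pi}\vc{v}\le b/c$). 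The factor $1-1_0(k,\vc{\varpi})=\dd{I}_{\{k\ge1\}}$ appears because a chain already at level $0$ contributes no taboo term.

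I expect the main obstacle to be securing the \emph{uniform} exponential rate. A naive triangle/Jensen bound only gives $\rme^{-ct}v(k,\vc{\varpi})+2b/c$, whose $b/c$ part does not vanish as $t\to\infty$; obtaining the $\rme^{-ct}$ decay on the \emph{whole} bound genuinely requires the coupling together with the taboo supermartingale controlling the still-uncoupled mass. The technical burden is to realize this coupling in continuous time (e.g.\ via a Poisson-clock / synchronous-jump representation) and to justify the Dynkin/optional-stopping and dominated-convergence steps for the possibly unbounded $\vc{v}$ and the infinite state space, and finally to check that the identical argument carries over to $\presub{n}\vc{Q}_n$ (whose chain lives on the closed, finite level-set $\bbF^{\leqslant n}$ carrying $\presub{n}\vc{\pi}_n$), so that one proof settles both (\ref{eqn-33a}) and (\ref{eqn-33b}).
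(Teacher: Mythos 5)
There is a genuine gap at the heart of your third step, and it is precisely the point where the paper's proof has to work hardest. You propose to couple the $\vc{Q}^{\dagger}$-chain started from $(k,\vc{\varpi})$ \emph{directly} with a stationary copy and to control the uncoupled mass by the event that ``the non-stationary copy must first reach level $0$,'' i.e.\ by $\{\tau>t\}\subseteq\{\tau_0>t\}$ with $\tau_0=\inf\{s:X_s=0\}$. This inclusion is unjustified: when the copy started from level $k$ first hits level $0$, the stationary copy may be at any level, so there is no reason the two have met by then, and hence no way to feed the coupling time into the taboo supermartingale estimate. Worse, $\vc{Q}$ is \emph{not} assumed block-monotone (only $\wt{\vc{Q}}$ is, by Assumption~\ref{assumpt-basic}), so an order-preserving coupling of two copies of the $\vc{Q}$-chain --- the usual device for bounding a meeting time by a hitting time of the bottom level --- is not available at all. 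Your ``generic statement'' therefore cannot be closed as stated: transferring the drift inequality to $\vc{Q}$ and $\presub{n}\vc{Q}_n$ (which you do correctly, and which the paper also does for $\presub{n}\wt{\vc{Q}}_n$ in (\ref{ineqn-(n)P_n*v})) is not enough, because the drift alone does not produce the coupling-time control.

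The paper circumvents both problems with a three-chain construction that your sketch omits. It splits $\vc{p}^{(t)}(k,\vc{\varpi})-\vc{\pi}$ through the chain started at $(0,J)$, producing the four terms of (\ref{eqn-06a}); it runs, alongside each $\vc{Q}$-chain, a dominating $\wt{\vc{Q}}$-chain via Lemma~\ref{lem2-continuous-ordering}, and order-couples the $\wt{\vc{Q}}$-chains among themselves via Lemma~\ref{lem1-continuous-ordering} (this is where $\wt{\vc{Q}}\in\sfBM_d$ is used). The orderings (\ref{ordering-01})--(\ref{ordering-02}) then show that $\{\wt{X}_t^{(h)}=0\}$ forces $X_t^{(h)}=X_t^{(0)}=0$, so each coupling time $T^{(h)}$ is bounded by the hitting time of level $0$ of the \emph{dominating block-monotone} chain, to which the supermartingale of Lemma~\ref{prop-supermartingale} applies; the factor $2$ comes from bounding $v(\wt{X}_t^{(0)},\cdot)\le v(\wt{X}_t^{(h)},\cdot)$ using $\vc{v}\in\sfBI_d$. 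Your second step (the Gr\"onwall/taboo estimates and $\wt{\vc{\pi}}\vc{v}\le b/c$) matches the paper's ingredients, but without the intermediate level-$0$ chain and the dominating $\wt{\vc{Q}}$-chains the argument does not go through.
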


\begin{proof}
We first prove (\ref{eqn-33a}). For this purpose, we provide
definitions and notation.  Let $(X,J)$ and $(\wt{X},\wt{J})$ denote
two random vectors on a probability space $(\Omega,\calF,\PP)$ such
that
\begin{align*}
&&&& \PP(X=k, J=i) 
&= \pi(k,i), & (k,i) & \in \bbF, &&&&
\\
&&&&\PP(\wt{X}=k, \wt{J}=i) 
&= \wt{\pi}(k,i), & (k,i) & \in \bbF. &&&&
\end{align*}
It follows from (\ref{eqn-varpi}) and $\vc{\pi} \prec_d
\wt{\vc{\pi}}$ that
\begin{align*}
\PP(J = i) 
&= \sum_{\ell=0}^{\infty}\pi(\ell,i)
= \varpi(i)
=   \sum_{\ell=0}^{\infty}\wt{\pi}(\ell,i)
=   \PP(\wt{J} = i), & i &\in \bbD,
\\
\PP(X > k \mid J = i) 
&=  { \sum_{\ell=k+1}^{\infty}\pi(\ell,i) \over \varpi(i) }
\nonumber
\\
&\le { \sum_{\ell=k+1}^{\infty}\wt{\pi}(\ell,i) \over \varpi(i) }
=    \PP(\wt{X} > k \mid \wt{J} = i), & k &\in \bbZ_+.
\end{align*}
Thus, we assume, without loss of generality, that $X \le \wt{X}$ and $J
= \wt{J}$ \cite[Theorem 1.2.4]{Mull02}.

We now define $\{(\wt{X}_t^{(h)},\wt{J}_t^{(h)});t\ge0\}$, $h \in
\{0,1,2\}$, as Markov chains with infinitesimal generator
$\wt{\vc{Q}}$ on the probability space $(\Omega,\calF,\PP)$ such that
\[
(\wt{X}_0^{(0)},\wt{J}_0^{(0)}) =(0,\wt{J}),  \quad 
(\wt{X}_0^{(1)},\wt{J}_0^{(1)}) = (k,\wt{J}), \quad 
(\wt{X}_0^{(2)},\wt{J}_0^{(2)}) = (\wt{X},\wt{J}).
\]
We also define $\{(X_t^{(h)},J_t^{(h)});t\ge0\}$, $h \in \{0,1,2\}$,
as Markov chains with infinitesimal generator $\vc{Q}$ on the
probability space $(\Omega,\calF,\PP)$ such that
\[
(X_0^{(0)},J_0^{(0)}) =(0,J),  \quad 
(X_0^{(1)},J_0^{(1)}) = (k,J), \quad 
(X_0^{(2)},J_0^{(2)}) = (X,J).
\]
Recall here that $\vc{Q} \prec_d \wt{\vc{Q}}$ and $\wt{\vc{Q}} \in
\sfBM_d$. Therefore, according to
Lemmas~\ref{lem1-continuous-ordering} and
\ref{lem2-continuous-ordering}, we assume that, for each $h \in
\{0,1,2\}$,
\begin{equation}
X_t^{(h)} \le \wt{X}_t^{(h)},\quad
J_t^{(h)} = \wt{J}_t^{(h)}\quad 
\mbox{for all $t \ge 0$},
\label{ordering-01}
\end{equation}
and that
\begin{equation}
\wt{X}_t^{(0)} \le \wt{X}_t^{(1)}, \quad 
\wt{X}_t^{(0)} \le \wt{X}_t^{(2)}, \quad
\wt{J}_t^{(0)} = \wt{J}_t^{(1)} = \wt{J}_t^{(2)}
\quad 
\mbox{for all $t \ge 0$}.
\label{ordering-02}
\end{equation}
For the Markov chains $\{(X_t^{(h)},J_t^{(h)})\}$'s, $h \in
\{0,1,2\}$, we introduce two coupling times $T^{(1)}$ and $T^{(2)}$ as
follows:
\begin{eqnarray*}
T^{(1)} &=& \inf\{t \ge 0: X_t^{(1)}=X_t^{(0)}\},
\\
T^{(2)} &=& \inf\{t \ge 0: X_t^{(2)}=X_t^{(0)}\}.
\end{eqnarray*}
We then assume, without loss of generality (see Remark~\ref{rem-first-meeting-lasts-forever}), that
\begin{eqnarray*}
X_t^{(1)} &=& X_t^{(0)}\qquad \mbox{for all $t \ge T^{(1)}$},
\\
X_t^{(2)} &=& X_t^{(0)}\qquad \mbox{for all $t \ge T^{(2)}$}.
\end{eqnarray*}
For convenience, we also introduce the following notation:
\begin{align*}
\EE_{(k,i)}[\,\, \cdot \,\,] 
&= \EE[~ \cdot \mid X_0=k, J_0=i], 
\\
\EE_{(k,i);(0,j)}[\,\, \cdot \,\,] 
&= \EE[~  \cdot \mid (X_0^{(h)}, 
J_0^{(h)}) = (k,i), (X_0^{(0)},J_0^{(0)}) = (0,j)],
\end{align*}
where $h \in \{1,2\}$, $(k,i) \in \bbF$ and $j\in\bbD$. 

We are now ready to prove (\ref{eqn-33a}). Proceeding as in the derivation of \cite[Eq.\ (3.18)]{Masu15-ADV} (see also \cite[Eq.\ (3.6)]{Lund96}), we have, for $|\vc{g}| \le \vc{v}$,
\begin{eqnarray}
|\vc{p}^{(t)}(k,\vc{\varpi})\vc{g} - \vc{\pi}\vc{g}|
&\le& 
\EE\!\left[\EE_{(k,J);(0,J)}[v(X_t^{(1)},J_t^{(1)}) \cdot \dd{I}_{\{T^{(1)} > t\}}] \right]
\nonumber
\\
&& {} 
+ \EE\!\left[\EE_{(k,J);(0,J)}[v(X_t^{(0)},J_t^{(0)}) \cdot \dd{I}_{\{T^{(1)} > t\}}]\right]
\nonumber
\\
&& {} + 
\EE\!\left[\EE_{(X,J);(0,J)}[v(X_t^{(2)},J_t^{(2)}) \cdot \dd{I}_{\{T^{(2)} > t\}}] \right]
\nonumber
\\
&& {} 
+ \EE\!\left[\EE_{(X,J);(0,J)}[v(X_t^{(0)},J_t^{(0)}) \cdot \dd{I}_{\{T^{(2)} > t\}}]\right]. \qquad
\label{eqn-06a}
\end{eqnarray}
Combining (\ref{eqn-06a}) with 
 (\ref{ordering-01}), (\ref{ordering-02}) and $\vc{v} \in
\sfBI_d$, we obtain, for $|\vc{g}| \le \vc{v}$,
\begin{eqnarray}
|\vc{p}^{(t)}(k,\vc{\varpi})\vc{g} - \vc{\pi}\vc{g}|
&\le& 2\EE\!\left[
\EE_{(k,J);(0,J)}[v(\wt{X}_t^{(1)},\wt{J}_t^{(1)}) 
\cdot \dd{I}_{\{T^{(1)} > t\}}] 
\right]
\nonumber
\\
&& {} + 2 \EE\!\left[
\EE_{(X,J);(0,J)}[v(\wt{X}_t^{(2)},\wt{J}_t^{(2)}) 
\cdot \dd{I}_{\{T^{(2)} > t\}}] 
\right]. \qquad
\label{add-eqn-151022-01}
\end{eqnarray}
Furthermore, it follows from (\ref{ordering-01}) and
(\ref{ordering-02}) that, for each $h \in \{1,2\}$, $\{\wt{X}_t^{(h)}
= 0\}$ implies $\{ X_t^{(h)} = X_t^{(0)} = 0\}$, which leads to $T^{(h)} \le
\inf\{t \ge 0; \wt{X}_t^{(h)} = 0\}$. Therefore,
(\ref{add-eqn-151022-01}) yields
\begin{eqnarray}
\left\|  \vc{p}^{(t)}(k,\vc{\varpi}) - \vc{\pi} \right\|_{\svc{v}}
&\le& 2\EE\!\left[\EE_{(k,\wt{J})}[v(\wt{X}_t,\wt{J}_t) 
\cdot \dd{I}_{\{\wt{\tau}_0 > t\}}] \right]
\nonumber
\\
&& {}
+  2\EE\!\left[\EE_{(\wt{X},\wt{J})}[v(\wt{X}_t,\wt{J}_t) 
\cdot \dd{I}_{\{\wt{\tau}_0 > t\}}] \right],
\label{eqn-20}
\end{eqnarray}
where $\wt{\tau}_0 = \inf\{t \ge 0: \wt{X}_t = 0\}$.

Let $M_t=\rme^{ct}v(\wt{X}_t,\wt{J}_t)\dd{I}_{\{\wt{\tau}_0 > t\}}$
for $t \ge 0$. It is shown in Lemma~\ref{prop-supermartingale} that
$\{M_t\}$ is a supermartingale.  Let $\{\theta_{\nu};\nu\in \bbZ_+\}$
denote a sequence of stopping times for $\{M_t;t\ge0\}$ such that $0
\le \theta_1 \le \theta_2 \le \cdots$ and
$\lim_{\nu\to\infty}\theta_{\nu}=\infty$. It then follows that, for
any $u \ge 0$, $\min(u,\theta_{\nu})$ is a stopping time for
$\{M_t;t\in\bbZ_+\}$. Therefore, using Doob's optional sampling
theorem (see, e.g., \cite[Section~10.10]{Will91}), we have
$\EE_{(k,i)}[M_{\min(t,\theta_{\nu})}] \le \EE_{(k,i)}[M_0]$, i.e.,
\begin{eqnarray*}
\EE_{(k,i)}[\rme^{c\min(t,\theta_{\nu})}
v(\wt{X}_{\min(t,\theta_{\nu})},\wt{J}_{\min(t,\theta_{\nu})})
\dd{I}_{\{\wt{\tau}_0 > \min(t,\theta_{\nu}) \}}]
 \le v(k,i)(1 - 1_0(k,i)),
\end{eqnarray*}
for $(k,i) \in \bbF$.
Letting $\nu\to\infty$ in the above inequality and using Fatou's
lemma, we obtain
\begin{equation}
\EE_{(k,i)}[v(\wt{X}_t,\wt{J}_t) \dd{I}_{\{\wt{\tau}_0 > t \}}]
\le \rme^{-ct}   v(k,i)(1 - 1_{0}(k,i)),
\qquad (k,i) \in \bbF,
\label{eqn-10}
\end{equation}
and thus
\begin{eqnarray}
\EE\!\left
[\EE_{(k,\wt{J})}[v(\wt{X}_t,\wt{J}_t)\dd{I}_{\{\wt{\tau}_0 > t \}}] 
\right]
&=& \sum_{i\in\bbD}\varpi(i)
\EE_{(k,i)}[v(\wt{X}_t,\wt{J}_t) \dd{I}_{\{\wt{\tau}_0 > t \}}]
\nonumber
\\
&\le& \rme^{-ct} v(k,\vc{\varpi})(1 - 1_{0}(k,\vc{\varpi})),
\quad k \in \bbZ_+, \qquad
\label{eqn-11}
\end{eqnarray}
where we use $1_0(k,i) = 1_0(k,\vc{\varpi})$ for all $i \in
\bbD$. Furthermore, pre-multiplying both sides of
(\ref{ineqn-hat{Q}v}) by $\wt{\vc{\pi}}$, we have $\wt{\vc{\pi}}\vc{v}
\le b / c$. Combining this and (\ref{eqn-10}) yields
\begin{equation}
\EE\!\left[
\EE_{(\wt{X},\wt{J})}[v(\wt{X}_t,\wt{J}_t) \cdot \dd{I}_{\{\wt{\tau}_0 > t\}}] 
\right]
\le \rme^{-ct}  \sum_{(k,i)\in\bbF}\pi(k,i)v(k,i)
\le  \rme^{-ct} {b\over c} .
\label{eqn-12}
\end{equation}
Substituting (\ref{eqn-11}) and (\ref{eqn-12}) into (\ref{eqn-20}), we
obtain (\ref{eqn-33a}).

Next, we prove (\ref{eqn-33b}). Let
$\presub{n}\wt{\vc{Q}}_n:=(\presub{n}\wt{q}_n(k,i;\ell,j))_{(k,i),(\ell,j)\in\bbF}$
denote the LC-block-augmented truncation of $\wt{\vc{Q}}$, which is
defined in a similar way to (\ref{def-trunc{q}_n(k,i;l,j)}).  Since
$\vc{Q} \prec_d \wt{\vc{Q}}$ and $\wt{\vc{Q}} \in \sfBM_d$, we have
$\presub{n}\wt{\vc{Q}}_n \in \sfBM_d$ and
\[
\presub{n}\vc{Q}_n \prec_d \presub{n}\wt{\vc{Q}}_n \prec_d \wt{\vc{Q}}.
\]
It follows from $\presub{n}\wt{\vc{Q}}_n \prec_d \wt{\vc{Q}}$, $\vc{v}
\in \sfBI_d$ and (\ref{ineqn-hat{Q}v}) that
\begin{equation}
\presub{n}\wt{\vc{Q}}_n\vc{v} \le \wt{\vc{Q}}\vc{v} 
\le -c\vc{v} + b\vc{1}_0.
\label{ineqn-(n)P_n*v}
\end{equation}
Therefore, we can readily prove (\ref{eqn-33b}) by replacing
$\wt{\vc{Q}}$ and $\vc{Q}$ with $\presub{n}\wt{\vc{Q}}_n$ and
$\presub{n}\vc{Q}_n$, respectively, in the proof of
(\ref{eqn-33a}). The details are omitted.
\end{proof}

\medskip

According to Lemma~\ref{lem-geo-2}, it remains to estimate the third
term in the right hand side of (\ref{eqn-27}) in order to derive an
upper bound for $\| \presub{n}\vc{\pi}_n - \vc{\pi}\|$. By doing this,
we obtain the following theorem.
\begin{thm}\label{thm-continu-geo}
If Assumptions~\ref{assumpt-basic} and \ref{assumpt-geo} hold, then, 
for all $n\in \bbN$ and $t \ge 0$,
\begin{eqnarray}
\left\| \presub{n}\vc{\pi}_n - \vc{\pi} \right\|
&\le& {b \over c}
\left(
4\rme^{-ct}  + 2t \sum_{j\in\bbD}{|\wt{q}(n,j;n,j)| \over v(n,j)}
\right).
\label{continu-geo-bound}
\end{eqnarray}
\end{thm}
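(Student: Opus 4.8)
The plan is to control the three terms on the right-hand side of the triangle inequality (\ref{eqn-27}) separately: the first two by specializing Lemma~\ref{lem-geo-2}, and the third by a variation-of-constants (coupling) estimate that reduces everything to the uniform control of a single boundary leakage rate. For the first two terms I would put $k=0$ in (\ref{eqn-33a}) and (\ref{eqn-33b}). Since $(0,i)\in\bbF^{\leqslant 0}$ we have $1_0(0,i)=1$ for every $i\in\bbD$, hence $1_0(0,\vc\varpi)=1$ and the factor $v(0,\vc\varpi)\bigl(1-1_0(0,\vc\varpi)\bigr)$ drops out, leaving $2\rme^{-ct}\,b/c$ on the right of each inequality. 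Because $\vc v\ge\vc e$ implies $\|\cdot\|=\|\cdot\|_{\svc e}\le\|\cdot\|_{\svc v}$, both $\|\vc p^{(t)}(0,\vc\varpi)-\vc\pi\|$ and $\|\presub{n}\vc p_n^{(t)}(0,\vc\varpi)-\presub{n}\vc\pi_n\|$ are bounded by $2\rme^{-ct}b/c$, so these two terms already supply the summand $(b/c)\,4\rme^{-ct}$ of (\ref{continu-geo-bound}).

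The heart of the argument is the third term $\|\presub{n}\vc p_n^{(t)}(0,\vc\varpi)-\vc p^{(t)}(0,\vc\varpi)\|$. I would write the difference of the two transition matrix functions through the variation-of-constants identity
\[
\presub{n}\vc p_n^{(t)}(0,\vc\varpi)-\vc p^{(t)}(0,\vc\varpi)
=\int_0^t \presub{n}\vc p_n^{(t-s)}(0,\vc\varpi)\,\bigl(\presub{n}\vc Q_n-\vc Q\bigr)\,\vc P^{(s)}\,\rmd s .
\]
Since $\vc P^{(s)}$ is stochastic it is a contraction for the total variation norm, and by (\ref{eqn-(n)p_n^{(u)}=0}) the row vector $\presub{n}\vc p_n^{(t-s)}(0,\vc\varpi)$ is supported on $\bbF^{\leqslant n}$. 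On those rows the explicit form (\ref{def-trunc{q}_n(k,i;l,j)}) shows that $\presub{n}\vc Q_n-\vc Q$ merely relocates, within each destination phase, the whole upward rate $\sum_{m>n}q(k,i;m,j)$ onto level $n$; hence, after the contraction by $\vc P^{(s)}$, the total variation norm of the integrand is bounded by $2L_{t-s}$, where
\[
L_u:=\sum_{(k,i)\in\bbF^{\leqslant n}}\presub{n}p_n^{(u)}(0,\vc\varpi;k,i)\sum_{j\in\bbD}\sum_{m>n}q(k,i;m,j),
\qquad u\ge 0 ,
\]
is the rate at which the truncated chain attempts to cross above level $n$. It therefore remains to bound $L_u$ uniformly in $u$ by $(b/c)\sum_{j\in\bbD}|\wt q(n,j;n,j)|/v(n,j)$; integrating over $s\in[0,t]$ then produces the factor $t$ and the summand $(b/c)\,2t\sum_{j}|\wt q(n,j;n,j)|/v(n,j)$.

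The uniform bound on $L_u$ would be assembled from a Lyapunov (drift) control of the truncation together with the dominance relation. First, the drift passes from $\wt{\vc Q}$ to $\presub{n}\vc Q_n$: since $\presub{n}\vc Q_n\prec_d\presub{n}\wt{\vc Q}_n\prec_d\wt{\vc Q}$ with $\vc v\in\sfBI_d$, inequality (\ref{ineqn-(n)P_n*v}) gives $\presub{n}\vc Q_n\vc v\le-c\vc v+b\vc{1}_0$; the supermartingale/Gronwall computation underlying Lemma~\ref{lem-geo-2}, combined with $v(0,\vc\varpi)\le\wt{\vc\pi}\vc v\le b/c$ (valid because $\vc v$ is block-increasing and $\varpi(i)=\sum_k\wt\pi(k,i)$ by (\ref{eqn-varpi})), yields the uniform moment bound $\presub{n}\vc p_n^{(u)}(0,\vc\varpi)\vc v\le b/c$ for all $u\ge0$. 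Second, $\vc Q\prec_d\wt{\vc Q}$ and the block monotonicity of $\wt{\vc Q}$ let me replace the upward rates of $\vc Q$ by those of $\wt{\vc Q}$ and carry the level index up to the boundary $n$, while $v(m,j)\ge v(n,j)$ for $m>n$ (again using $\vc v\in\sfBI_d$) supplies the denominators $v(n,j)$; finally the boundary instance of (\ref{ineqn-hat{Q}v}) at level $n$, namely $\sum_{(\ell,j')}\wt q(n,j;\ell,j')v(\ell,j')\le-c\,v(n,j)$, controls the weighted boundary exit by $|\wt q(n,j;n,j)|\,v(n,j)$. Combining the moment bound (which, through Markov's inequality, limits the occupation of the boundary by $(b/c)/v(n,j)$) with this exit-rate control produces exactly $L_u\le(b/c)\sum_{j}|\wt q(n,j;n,j)|/v(n,j)$, and adding the two summands gives (\ref{continu-geo-bound}).

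I expect two steps to be delicate. The first is justifying the variation-of-constants identity for the possibly unbounded generators $\vc Q$ and $\presub{n}\vc Q_n$: I would prove it first on the finite northwest-corner truncations, where both generators are bounded and the exponentials are honest power series, and then pass to the limit by dominated convergence, in the spirit of the proofs of Lemmas~\ref{lem-Q} and \ref{lem-Q-3} (cf.\ (\ref{add-eqn-08-1})--(\ref{add-eqn-08-2})). The second, and genuinely technical, point is the phase-and-level bookkeeping in the estimate of $L_u$: the source phase of a boundary-crossing state and the destination phase of the jump must be disentangled so that, after invoking block monotonicity, the dominance $\vc Q\prec_d\wt{\vc Q}$, and the block-increasing property of $\vc v$, the bound collapses precisely onto the diagonal boundary rates $|\wt q(n,j;n,j)|$ weighted by $1/v(n,j)$.
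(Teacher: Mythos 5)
Your decomposition via (\ref{eqn-27}) and your treatment of the first two terms coincide with the paper's proof: putting $k=0$ and $\vc{v}=\vc{e}$ in (\ref{eqn-33a})--(\ref{eqn-33b}) gives the summand $4b\rme^{-ct}/c$, and your variation-of-constants identity for the third term is, in substance, the paper's Lemma~\ref{appen-lem-2}. The gap is in the final step: the claimed uniform bound $L_u\le(b/c)\sum_{j}|\wt{q}(n,j;n,j)|/v(n,j)$ does not follow from ``moment bound $+$ Markov's inequality $+$ boundary exit-rate control.'' The quantity $L_u$ aggregates the rate of jumping above level $n$ from \emph{every} level $k\le n$ (for the BMAP-type generators of Section~\ref{sec-applications} a batch arrival can overshoot $n$ from any interior level), so it is not controlled by the occupation probability of the boundary level $n$ alone. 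If instead you try to pay for the interior contributions with the moment bound $\presub{n}\vc{p}_n^{(u)}(0,\vc{\varpi})\vc{v}\le b/c$, the drift inequality at level $n$ only yields $\sum_{(\ell',j')\in\bbF^{>n}}\wt{q}(n,j;\ell',j')\,v(\ell',j')\le|\wt{q}(n,j;n,j)|\,v(n,j)$, i.e.\ an estimate with $v(n,j)$ in the \emph{numerator}; chasing this through gives a bound of order $(b/c)\max_j|\wt{q}(n,j;n,j)|\,v(n,j)$, which is far weaker than (\ref{continu-geo-bound}) and in the applications does not even tend to $0$.

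The missing idea is the paper's two-stage comparison. First, with $\vc{a}_n\in\sfBI_d$ (block-increasing because $\wt{\vc{Q}}\in\sfBM_d$), the relations $\presub{n}\vc{P}_n^{(u)}\prec_d\presub{n}\wt{\vc{P}}_n^{(u)}\in\sfBM_d$ and $(\vc{\varpi},\vc{0},\vc{0},\dots)\prec_d\presub{n}\wt{\vc{\pi}}_n$ give the $u$-uniform bound $(\vc{\varpi},\vc{0},\vc{0},\dots)\presub{n}\vc{P}_n^{(u)}\vc{a}_n\le\presub{n}\wt{\vc{\pi}}_n\vc{a}_n$. Second --- and this is the step your sketch has no substitute for --- the exact stationarity identity $\presub{n}\wt{\vc{\pi}}_n\,\presub{n}\wt{\vc{Q}}_n=\vc{0}$ together with $a_n(\ell,j)=\sum_{j'}(\presub{n}\wt{q}_n(\ell,j;n,j')-\wt{q}(\ell,j;n,j'))$ converts the total upward flux across level $n$ into $\sum_{j'}\sum_{(\ell,j)\in\bbF^{\leqslant n}}\presub{n}\wt{\pi}_n(\ell,j)\,(-\wt{q}(\ell,j;n,j'))$; since every off-diagonal term here is nonpositive, only the diagonal boundary term $\presub{n}\wt{\pi}_n(n,j')\,|\wt{q}(n,j';n,j')|$ survives as an upper bound, and $\presub{n}\wt{\vc{\pi}}_n\vc{v}\le b/c$ then supplies the factor $1/v(n,j')$. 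Without this flux-balance argument the stated bound is not reached, so the proposal as written is incomplete at its central estimate.
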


\begin{rem}\label{rem-minimum-bound}
It is easy to see that, for each $n \in \bbN$, the right hand side of
(\ref{continu-geo-bound}) takes the minimum value at $t =
c^{-1}t^{\ast}(n)$, where
\begin{equation}
t^{\ast}(n)
= \max\left\{
-
\log\left( 
{1 \over 2c} \sum_{j\in\bbD}{|\wt{q}(n,j;n,j)| \over v(n,j)}
\right)
,0
\right\},
\qquad n \in \bbN.
\label{defn-t^*(n)}
\end{equation}
Substituting (\ref{defn-t^*(n)}) into (\ref{continu-geo-bound}) yields
\begin{eqnarray}
\left\| \presub{n}\vc{\pi}_n - \vc{\pi} \right\|
&\le& {4b \over c} 
(t^{\ast}(n) + 1 ) \exp\{-t^{\ast}(n)\},\qquad n \in \bbN.
\label{continu-geo-bound-minimun}
\end{eqnarray}
If $\lim_{n\to\infty} \sum_{j\in\bbD}|\wt{q}(n,j;n,j)| / v(n,j) = 0$
and thus $\lim_{n\to\infty}t^{\ast}(n)=\infty$, then the right hand
side of (\ref{continu-geo-bound-minimun}) converges to zero as $n \to \infty$.
A similar discussion is found in \cite{Twee98} (see Eq.~(50) therein).
\end{rem}

\begin{proofof}{Theorem~\ref{thm-continu-geo}} 
Letting $k=0$ and $\vc{v} = \vc{e}$ in (\ref{eqn-33a}) and
(\ref{eqn-33b}) (see Lemma~\ref{lem-geo-2}) and substituting the
result into (\ref{eqn-27}), we have
\[
\left\| \presub{n}\vc{\pi}_n - \vc{\pi} \right\|
\le {4b\rme^{-ct} \over c}
+ \left\| 
\presub{n}\vc{p}_n^{(t)}(0,\vc{\varpi}) - \vc{p}^{(t)}(0,\vc{\varpi}) 
\right\|.
\]
Therefore, it suffices to prove that, for all $n\in \bbN$ and $t \ge
0$,
\begin{equation}
\left\| 
\presub{n}\vc{p}_n^{(t)}(0,\vc{\varpi}) - \vc{p}^{(t)}(0,\vc{\varpi}) 
\right\|
\le {2tb \over c}\sum_{j\in\bbD}{|\wt{q}(n,j;n,j)| \over v(n,j)}.
\label{add-eqn-23}
\end{equation}
Note here that all the off-diagonal elements of $\vc{Q}$ are
nonnegative.  Using this fact, (\ref{eqn-(n)p_n^{(u)}=0}) and
Lemma~\ref{appen-lem-2}, we have
\begin{eqnarray}
\lefteqn{
\left\| \presub{n}\vc{p}_n^{(t)}(0,\vc{\varpi}) - \vc{p}^{(t)}(0,\vc{\varpi}) 
\right\|
}
~~&&
\nonumber\\
&\le& \sum_{i \in \bbD}\varpi(i)
\left\| \presub{n}\vc{p}_n^{(t)}(0,i) - \vc{p}^{(t)}(0,i) 
\right\|
\nonumber
\\
&\le&  2\int_0^t 
\sum_{(\ell,j)\in\bbF} 
\left(\sum_{i \in \bbD}\varpi(i)\presub{n}p_n^{(u)}(0,i;\ell,j) \right) \rmd u 
\sum_{ (\ell',j') \in \bbF^{>n} }  |q(\ell,j;\ell',j')|
\nonumber
\\
&=& 2\int_0^t
\sum_{(\ell,j)\in\bbF^{\leqslant n}} 
\left(\sum_{i \in \bbD}\varpi(i)\presub{n}p_n^{(u)}(0,i;\ell,j) \right) \rmd u
\sum_{ (\ell',j') \in \bbF^{>n} } q(\ell,j;\ell',j').\qquad~~
\label{eqn-29}
\end{eqnarray}
In addition, $\vc{Q} \prec_d \wt{\vc{Q}}$ yields
\[
\sum_{ (\ell',j') \in \bbF^{>n} } q(\ell,j;\ell',j')
\le \sum_{ (\ell',j') \in \bbF^{>n} } \wt{q}(\ell,j;\ell',j'),
\qquad (\ell,i) \in \bbF^{\leqslant n}.
\]
Substituting this inequality into (\ref{eqn-29}) and using
(\ref{eqn-(n)p_n^{(u)}=0}), we obtain
\begin{eqnarray}
\lefteqn{
\left\| \presub{n}\vc{p}_n^{(t)}(0,\vc{\varpi}) - \vc{p}^{(t)}(0,\vc{\varpi}) 
\right\|
}
\quad &&
\nonumber
\\
&\le& 2\int_0^t
\sum_{(\ell,j)\in\bbF^{\leqslant n}} 
\left(\sum_{i \in \bbD}\varpi(i)\presub{n}p_n^{(u)}(0,i;\ell,j) \right)\rmd u
\sum_{ (\ell',j') \in \bbF^{>n} } \wt{q}(\ell,j;\ell',j') 
\nonumber
\\
&=& 2\int_0^t
\sum_{(\ell,j)\in\bbF} 
\left(\sum_{i \in \bbD}\varpi(i)\presub{n}p_n^{(u)}(0,i;\ell,j) \right)\rmd u
\sum_{ (\ell',j') \in \bbF^{>n} } \wt{q}(\ell,j;\ell',j') 
\nonumber
\\
&=& 2\int_0^t
(\vc{\varpi},\vc{0},\vc{0},\dots,) \presub{n}\vc{P}_n^{(u)}  \vc{a}_n \,\rmd u,
\label{eqn-30}
\end{eqnarray}
where $\vc{a}_n := (a_n(\ell,j))_{(\ell,j)\in\bbF}$ is a column vector such
that
\begin{equation}
a_n(\ell,j) = 
\left\{
\begin{array}{ll}
\dm\sum_{ (\ell',j') \in \bbF^{>n} } \wt{q}(\ell,j;\ell',j'), 
& (\ell,j) \in \bbF^{\leqslant n},
\\
\dm\sum_{ (\ell',j') \in \bbF^{>n} } \wt{q}(n,j;\ell',j'), 
& (\ell,j) \in \bbF^{>n}.
\end{array}
\right.
\label{defn-a_n}
\end{equation}
It follows from (\ref{defn-a_n}) and $\wt{\vc{Q}} \in \sfBM_d$ that
$\vc{a}_n \in \sfBI_d$. It also follows from (\ref{defn-a_n}) and the
definition of $\presub{n}\wt{\vc{Q}}_n$ that, for $(\ell,j) \in
\bbF^{\leqslant n}$,
\begin{eqnarray*}
\sum_{j'\in\bbD}\presub{n}\wt{q}_n(\ell,j;n,j') 
&=& \sum_{j'\in\bbD}\wt{q}(\ell,j;n,j') 
+ \sum_{(\ell',j') \in \bbF^{>n}} \wt{q}(\ell,j;\ell',j')
\nonumber
\\
&=& \sum_{j'\in\bbD}\wt{q}(\ell,j;n,j') + a_n(\ell,j),
\end{eqnarray*}
which leads to
\begin{equation}
a_n(\ell,j) = \sum_{j'\in\bbD} \left(
\presub{n}\wt{q}_n(\ell,j;n,j') - \wt{q}(\ell,j;n,j')
\right),\qquad (\ell,j) \in \bbF^{\leqslant n}.
\label{eqn-a}
\end{equation}

We now define
$\presub{n}\wt{\vc{\pi}}_n:=(\presub{n}\wt{\pi}_n(k,i))_{(k,i)\in\bbF}$
as the stationary distribution vector of $\presub{n}\wt{\vc{Q}}_n$. It
then follows from Lemma~\ref{lem-(n)pi_*(k,i)} and the irreducibility
of $\wt{\vc{Q}}$ that
\begin{equation}
\presub{n}\wt{\pi}_n(k,i) = 0,\qquad (k,i) \in \bbF^{>n}.
\label{eqn-(n)widetilde{pi}_*(k,i)=0}
\end{equation}
It also follows from $\vc{Q} \prec_d \wt{\vc{Q}} \in \sfBM_d$ that
$\presub{n}\wt{\vc{Q}}_n \in \sfBM_d$ and $\presub{n}\vc{Q}_n \prec_d
\presub{n}\wt{\vc{Q}}_n \prec_d \wt{\vc{Q}}$. Thus,
Lemmas~\ref{lem-Q}, \ref{lem-Q-2} and \ref{lem-Q-3} imply that
\begin{align}
&&&&
\sum_{k=0}^{\infty}\presub{n}\wt{\pi}_n(k,i)
&= \sum_{k=0}^{\infty}\wt{\pi}(k,i)
= \varpi(i),
& i &\in \bbD, &&&&
\label{sum-(n)wt{pi}_n(k,i)}
\\
&&&&
\presub{n}\vc{P}_n^{(t)} &\prec_d
\presub{n}\wt{\vc{P}}_n^{(t)} \in \sfBM_d,
& t &\ge 0,&&&&
\label{relation-(n)P_n^{(t)}-(n)wt{P}_n^{(t)}}
\end{align}
where $\presub{n}\vc{P}_n^{(t)}$ and $\presub{n}\wt{\vc{P}}_n^{(t)}$
are the transition matrix functions of the infinitesimal generators
$\presub{n}\vc{Q}_n$ and $\presub{n}\wt{\vc{Q}}_n$,
respectively. Using (\ref{relation-(n)P_n^{(t)}-(n)wt{P}_n^{(t)}}) and
$\vc{a}_n \in \sfBI_d$, we have
\begin{eqnarray*}
\presub{n}\vc{P}_n^{(t)} \vc{a}_n
&\le& \presub{n}\wt{\vc{P}}_n^{(t)} \vc{a}_n,
\qquad t \ge 0.
\end{eqnarray*}
Applying this inequality to (\ref{eqn-30}) yields
\begin{eqnarray}
\left\| \presub{n}\vc{p}_n^{(t)}(0,\vc{\varpi}) - \vc{p}^{(t)}(0,\vc{\varpi}) 
\right\|
\le 2\int_0^t
(\vc{\varpi},\vc{0},\vc{0},\dots,) \presub{n}\wt{\vc{P}}_n^{(u)}  \vc{a}_n \,\rmd u.
\label{eqn-30-wt}
\end{eqnarray}
Note here that (\ref{sum-(n)wt{pi}_n(k,i)}) leads to
$(\vc{\varpi},0,0,\dots,) \prec_d
\presub{n}\wt{\vc{\pi}}_n$. Combining this relation with
$\presub{n}\wt{\vc{P}}_n^{(t)} \in \sfBM_d$ (see
Proposition~\ref{prop-2}), we have, for $t \ge 0$,
\begin{equation}
(\vc{\varpi},\vc{0},\vc{0},\dots,) \presub{n}\wt{\vc{P}}_n^{(t)} \prec_d
\presub{n}\wt{\vc{\pi}}_n \,\presub{n}\wt{\vc{P}}_n^{(t)}
= \presub{n}\wt{\vc{\pi}}_n,
\label{add-eqn-14}
\end{equation}
where the last equality follows from
$\presub{n}\wt{\vc{\pi}}_n\,\presub{n}\wt{\vc{P}}_n^{(t)}
= \presub{n}\wt{\vc{\pi}}_n$. Furthermore, using (\ref{add-eqn-14}) and
$\vc{a}_n \in \sfBI_d$, we obtain
\begin{eqnarray*}
(\vc{\varpi},\vc{0},\vc{0},\dots,)\presub{n}\wt{\vc{P}}_n^{(t)} \vc{a}_n
&\le& \presub{n}\wt{\vc{\pi}}_n \vc{a}_n,
\qquad t \ge 0.
\end{eqnarray*}
Substituting this into (\ref{eqn-30-wt}) results in
\begin{eqnarray}
\left\| \presub{n}\vc{p}_n^{(t)}(0,\vc{\varpi}) - \vc{p}^{(t)}(0,\vc{\varpi}) 
\right\|
\le  2t \presub{n}\wt{\vc{\pi}}_n \vc{a}_n,
\qquad n \in \bbN,\ t \ge 0.
\label{eqn-30a}
\end{eqnarray}

In what follows, we estimate $\presub{n}\wt{\vc{\pi}}_n \vc{a}_n$.
From (\ref{eqn-a}), (\ref{eqn-(n)widetilde{pi}_*(k,i)=0}) and
$\presub{n}\wt{\vc{\pi}}_n\,\presub{n}\wt{\vc{Q}}_n =
\vc{0}$, we have
\begin{eqnarray}
\presub{n}\wt{\vc{\pi}}_n \vc{a}_n
&=& \sum_{(\ell,j)\in\bbF^{\leqslant n}} 
\presub{n}\wt{\pi}_n(\ell,j)
\sum_{j'\in\bbD} \left(\presub{n}\wt{q}_n(\ell,j;n,j') 
 -  \wt{q}(\ell,j;n,j') \right)
\nonumber
\\
&=& \sum_{(\ell,j)\in\bbF^{\leqslant n}} 
\presub{n}\wt{\pi}_n(\ell,j)
\sum_{j'\in\bbD} \left(  -  \wt{q}(\ell,j;n,j') \right)
\nonumber
\\
&=& \sum_{j'\in\bbD}
\sum_{(\ell,j)\in\bbF^{\leqslant n}} 
\presub{n}\wt{\pi}_n(\ell,j)
\left(  -  \wt{q}(\ell,j;n,j') \right).
\label{eqn-30b}
\end{eqnarray}
By definition, $- \wt{q}(\ell,j;n,j') \le 0$ for
$(\ell,j)\in\bbF^{\leqslant n} \setminus \{(n,j')\}$ and
$-\wt{q}(n,j';n,j') \ge 0$. Therefore,
\[
\sum_{(\ell,j)\in\bbF^{\leqslant n}} 
\presub{n}\wt{\pi}_n(\ell,j)
\left(  -  \wt{q}(\ell,j;n,j') \right)
\le \presub{n}\wt{\pi}_n(n,j') |\wt{q}(n,j';n,j')|,
\quad j' \in \bbD.
\]
Applying this to (\ref{eqn-30b}), we have
\begin{eqnarray}
\presub{n}\wt{\vc{\pi}}_n \vc{a}_n
&\le& \sum_{j'\in\bbD} 
\presub{n}\wt{\pi}_n(n,j') |\wt{q}(n,j';n,j')|.
\label{eqn-30d}
\end{eqnarray}
Pre-multiplying both sides of (\ref{ineqn-(n)P_n*v}) by
$\presub{n}\wt{\vc{\pi}}_n$, we obtain
$\presub{n}\wt{\vc{\pi}}_n\vc{v} \le b/c$, which
leads to
\begin{equation}
\presub{n}\wt{\pi}_n(n,j') \le {b \over c}\cdot {1 \over v(n,j')},
\qquad j' \in \bbD.
\label{ineqn-(n)wt{pi}_n(n,j)}
\end{equation}
Using (\ref{eqn-30d}) and (\ref{ineqn-(n)wt{pi}_n(n,j)}), we have
\[
\presub{n}\wt{\vc{\pi}}_n \vc{a}_n
\le {b \over c} \sum_{j'\in\bbD} {|\wt{q}(n,j';n,j')| \over v(n,j')}.
\]
Substituting this inequality into (\ref{eqn-30a}) yields
(\ref{add-eqn-23}).
\end{proofof}

\medskip

From Theorem~\ref{thm-continu-geo} and Remark~\ref{rem-minimum-bound}, we obtain
Corollary~\ref{thm-continu-02} below, where the drift condition
(\ref{ineqn-hat{Q}v}) for $\wt{\vc{Q}}$ is weakened whereas the set of
states $\{(0,i);i\in\bbD\}$ is assumed to be reachable directly from
each state in $\bbF^{\leqslant K}$ with a sufficiently large $K$.
\begin{coro}\label{thm-continu-02}
Suppose that Assumptions~\ref{assumpt-basic} and \ref{assumpt-geo} hold and there exist some
$c',b' \in (0,\infty)$, $K \in \bbZ_+$ and column vector
$\vc{v}':=(v'(k,i))_{(k,i)\in\bbF} \in \sfBI_d$ with $\vc{v}' \ge
\vc{e}$ such that
\begin{equation}
\wt{\vc{Q}}\vc{v}' \le -c' \vc{v}' + b'\vc{1}_K.
\label{ineqn-hat{Q}v'}
\end{equation}
Let $\wt{\vc{Q}}(k;\ell) = (\wt{q}(k,i;\ell,j))_{(i,j)\in\bbD}$ for
$k,\ell\in\bbZ_+$, and suppose that
\[
\wt{\vc{Q}}(K;0)\vc{e} > \vc{0}.
\]
Under these conditions, the bound (\ref{continu-geo-bound-minimun}),
together with (\ref{defn-t^*(n)}), holds for all $n\in \bbN$, where
$c,b,B \in (0,\infty)$ are constants such that
\begin{eqnarray}
c &=& {c' \over 1+B},
\label{add-defn-c}
\\
b\vc{e} &\ge& b'\vc{e} - B \wt{\vc{Q}}(0;0)\vc{e},
\label{add-defn-b}
\\
B\cdot\wt{\vc{Q}}(K;0)\vc{e} &\ge& b'\vc{e},
\label{add-defn-B}
\end{eqnarray}
and block-wise increasing $\vc{v}=(v(k,i))_{(k,i)\in\bbF}$ is given by
\begin{eqnarray}
v(k,i) &=& 
\left\{
\begin{array}{ll}
v'(0,i), & k=0,~i\in\bbD,
\\
v'(k,i) + B, & k\in\bbN,~i\in\bbD.
\end{array}
\right.
\label{add-defn-v}
\end{eqnarray}
\end{coro}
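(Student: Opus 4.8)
The plan is to deduce the corollary from Theorem~\ref{thm-continu-geo} and Remark~\ref{rem-minimum-bound} by verifying that the modified test function $\vc{v}$ of (\ref{add-defn-v}), paired with the constants $c$ and $b$ of (\ref{add-defn-c}) and (\ref{add-defn-b}), satisfies Assumption~\ref{assumpt-geo}; that is, the level-$0$ drift inequality $\wt{\vc{Q}}\vc{v} \le -c\vc{v} + b\vc{1}_0$. Once this is in place, applying Theorem~\ref{thm-continu-geo} to the triple $(\vc{v},c,b)$ and optimizing over $t$ as in Remark~\ref{rem-minimum-bound} yields (\ref{continu-geo-bound-minimun}) with $t^{\ast}(n)$ given by (\ref{defn-t^*(n)}). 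First I would confirm that the constants are well-defined and strictly positive: the hypothesis $\wt{\vc{Q}}(K;0)\vc{e} > \vc{0}$ guarantees a finite $B > 0$ obeying (\ref{add-defn-B}); then $c = c'/(1+B) > 0$; and since conservativeness of $\wt{\vc{Q}}$ forces $[\wt{\vc{Q}}(0;0)\vc{e}]_i = -\sum_{\ell\ge1}\sum_{j}\wt{q}(0,i;\ell,j) \le 0$, the right-hand side of (\ref{add-defn-b}) dominates $b'\vc{e} > \vc{0}$, so a positive $b$ exists. I would also note that $\vc{v} = \vc{v}' + B(\vc{e} - \vc{1}_0)$ belongs to $\sfBI_d$ and satisfies $\vc{v} \ge \vc{e}$, both being immediate from $\vc{v}' \in \sfBI_d$, $\vc{v}' \ge \vc{e}$ and $\vc{e} - \vc{1}_0 \in \sfBI_d$.

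The decisive structural ingredient is a monotonicity property of the return rates to level $0$. Because Assumption~\ref{assumpt-basic} implies Assumption~\ref{assumpt-Q_1-Q_2}, Lemma~\ref{lem-Q-2} gives that $\xi(i,j) = \sum_{m=0}^{\infty}\wt{q}(k,i;m,j)$ does not depend on $k$. Writing $\wt{q}(k,i;0,j) = \xi(i,j) - \sum_{m=1}^{\infty}\wt{q}(k,i;m,j)$ and invoking the block monotonicity of $\wt{\vc{Q}}$ (Definition~\ref{defn-Block-Monotonicity} with $\ell = 1$, which is available for every $j$ once $k \ge 1$), I would conclude that $\wt{q}(k,i;0,j)$ is non-increasing in $k$ for $k \ge 1$, and hence that $[\wt{\vc{Q}}(k;0)\vc{e}]_i \ge [\wt{\vc{Q}}(K;0)\vc{e}]_i$ for all $1 \le k \le K$ and $i \in \bbD$. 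Combined with (\ref{add-defn-B}), this produces the band estimate $B[\wt{\vc{Q}}(k;0)\vc{e}]_i \ge b'$ throughout $1 \le k \le K$, which is precisely the amount of return drift needed to absorb the constant $b'$ that (\ref{ineqn-hat{Q}v'}) leaves behind at those levels.

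The drift verification is then a block-by-block computation based on the identity $\wt{\vc{Q}}\vc{v} = \wt{\vc{Q}}\vc{v}' - B\wt{\vc{Q}}\vc{1}_0$ (using $\wt{\vc{Q}}\vc{e} = \vc{0}$) together with $[\wt{\vc{Q}}\vc{1}_0]_{(k,i)} = [\wt{\vc{Q}}(k;0)\vc{e}]_i$. At level $k = 0$ the nonnegative term $-B[\wt{\vc{Q}}(0;0)\vc{e}]_i$ is absorbed by the choice (\ref{add-defn-b}) of $b$, while $c \le c'$ lets me replace $-c'v'(0,i)$ by $-cv(0,i)$. For $1 \le k \le K$ I would apply the band estimate $B[\wt{\vc{Q}}(k;0)\vc{e}]_i \ge b'$ to cancel the constant, and then use the algebraic identity $c' - c = cB$ (equivalent to (\ref{add-defn-c})) together with $v'(k,i) \ge 1$ to reach $[\wt{\vc{Q}}\vc{v}]_{(k,i)} \le -c'v'(k,i) \le -c(v'(k,i)+B) = -cv(k,i)$, the term $b\vc{1}_0$ being zero here. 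For $k > K$ the constant is already absent from (\ref{ineqn-hat{Q}v'}), the extra return term is nonnegative and may be discarded, and the same identity with $v'(k,i) \ge 1$ again gives $-c'v'(k,i) \le -cv(k,i)$. Assembling the three cases establishes $\wt{\vc{Q}}\vc{v} \le -c\vc{v} + b\vc{1}_0$, so Assumption~\ref{assumpt-geo} holds for $(\vc{v},c,b)$ and the conclusion follows.

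I expect the genuine obstacle to lie in the band $1 \le k \le K-1$: there the weakened condition (\ref{ineqn-hat{Q}v'}) still carries the full constant $b'$ while $\vc{1}_0$ vanishes, so no pointwise return to level $0$ can be assumed at those levels in isolation. The whole argument turns on the observation that block monotonicity makes the level-$0$ return rates largest at the low levels, so that the single reachability hypothesis at level $K$ propagates downward to cover the entire band; pinning down and exploiting this monotonicity is the heart of the proof, after which the remaining case analysis is routine once the identity $c'-c = cB$ is recognized.
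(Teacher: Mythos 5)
Your proposal is correct and follows essentially the same route as the paper: verify the level-$0$ drift inequality $\wt{\vc{Q}}\vc{v}\le -c\vc{v}+b\vc{1}_0$ case by case ($k=0$, $1\le k\le K$, $k\ge K+1$) using the identity $\wt{\vc{Q}}\vc{v}=\wt{\vc{Q}}\vc{v}'-B\wt{\vc{Q}}\vc{1}_0$, the block-monotonicity consequence $\wt{\vc{Q}}(k;0)\vc{e}\ge\wt{\vc{Q}}(K;0)\vc{e}$ for $1\le k\le K$, and the relation $c'-c=cB$ with $\vc{v}'\ge\vc{e}$, then invoke Theorem~\ref{thm-continu-geo} and Remark~\ref{rem-minimum-bound}. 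Your explicit justification of the return-rate monotonicity via Lemma~\ref{lem-Q-2} is a detail the paper leaves implicit, but the argument is the same.
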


\proof It suffices to prove that (\ref{ineqn-hat{Q}v}) holds for
$c,b,B \in (0,\infty)$ and $\vc{v} \in \sfBI_d$ such that
(\ref{add-defn-c})--(\ref{add-defn-v}) are satisfied.  We begin with
the estimation of $\sum_{\ell=0}^{\infty}
\wt{\vc{Q}}(0;\ell)\vc{v}(\ell)$.  It follows from (\ref{add-defn-v})
and $\sum_{\ell=0}^{\infty}\wt{\vc{Q}}(k;\ell)\vc{e} = \vc{0}$ for all
$k \in \bbZ_+$ that
\begin{eqnarray}
\sum_{\ell=0}^{\infty}
\wt{\vc{Q}}(k;\ell)\vc{v}(\ell)
&=& \sum_{l=1}^{\infty}\wt{\vc{Q}}(k;\ell)(\vc{v}'(\ell) + B\vc{e})
+ \wt{\vc{Q}}(k;0)\vc{v}'(0)
\nonumber
\\
&=& \sum_{\ell=0}^{\infty}\wt{\vc{Q}}(k;\ell)\vc{v}'(\ell) 
- B \wt{\vc{Q}}(k;0)\vc{e},\qquad k \in \bbZ_+.
\label{add-eqn-07}
\end{eqnarray}
Substituting $k=0$ into (\ref{add-eqn-07}), and applying
(\ref{ineqn-hat{Q}v'}) and $\vc{v}'(0) = \vc{v}(0)$ to the resulting
equation, we have
\begin{eqnarray}
\sum_{\ell=0}^{\infty}
\wt{\vc{Q}}(0;\ell)\vc{v}(\ell)
&\le& -c'\vc{v}(0) + b'\vc{e} - B \wt{\vc{Q}}(0;0)\vc{e}
\nonumber
\\
&\le& -c'\vc{v}(0) + b\vc{e}
\le -c\vc{v}(0) + b\vc{e},
\label{add-eqn-151026-01}
\end{eqnarray}
where the last two inequalities follow from
(\ref{add-defn-b}) and $c' \ge c$ (due to (\ref{add-defn-c})).  

Next we consider $\sum_{\ell=0}^{\infty}
\wt{\vc{Q}}(k;\ell)\vc{v}(\ell)$ for $ k \in \{1,2,\dots,K\}$.  It
follows from $\wt{\vc{Q}} \in \sfBM_d$ that $\wt{\vc{Q}}(k;0)\vc{e}
\ge \wt{\vc{Q}}(K;0)\vc{e}$ for $k=1,2,\dots,K$. Incorporating this
inequality and (\ref{ineqn-hat{Q}v'}) into (\ref{add-eqn-07}) for $k
\in \{1,2,\dots,K\}$, we obtain
\begin{eqnarray}
\sum_{\ell=0}^{\infty}
\wt{\vc{Q}}(k;\ell)\vc{v}(\ell)
&\le& \sum_{\ell=0}^{\infty}\wt{\vc{Q}}(k;\ell)\vc{v}'(\ell) 
- B \wt{\vc{Q}}(K;0)\vc{e}
\nonumber
\\
&\le&
 -c'\vc{v}'(k) + b'\vc{e} 
-B\wt{\vc{Q}}(K;0)\vc{e}
\nonumber
\\
&\le& -c'\vc{v}'(k),\qquad k=1,2,\dots,K, 
\label{add-eqn-04}
\end{eqnarray}
where the last inequality holds due to (\ref{add-defn-B}).  Note here
that (\ref{add-defn-c}), (\ref{add-defn-v}) and $\vc{v}'\ge \vc{e}$
imply
\begin{equation}
c\vc{v}(k) = c (\vc{v}'(k)+B\vc{e}) \le c'\vc{v}'(k),\qquad  k \in \bbN,
\label{add-eqn-05}
\end{equation}
from which and (\ref{add-eqn-04}) we have
\begin{equation}
\sum_{\ell=0}^{\infty} \wt{\vc{Q}}(k;\ell)\vc{v}(\ell)
\le -c\vc{v}(k),\qquad k=1,2,\dots,K.
\label{add-eqn-151026-02}
\end{equation}

Finally, we estimate $\sum_{\ell=0}^{\infty}
\wt{\vc{Q}}(k;\ell)\vc{v}(\ell)$ for $ k \ge K+1$. Substituting
(\ref{ineqn-hat{Q}v'}) into (\ref{add-eqn-07}) for $k \ge K+1$ and
using (\ref{add-eqn-05}) and $\wt{\vc{Q}}(k;0)\vc{e} \ge \vc{0}$, we
obtain
\begin{eqnarray}
\sum_{\ell=0}^{\infty}
\wt{\vc{Q}}(k;\ell)\vc{v}(\ell)
&\le& -c'\vc{v}'(k) - B \wt{\vc{Q}}(k;0)\vc{e}
\nonumber
\\
&\le& -c\vc{v}(k) - B \wt{\vc{Q}}(k;0)\vc{e} 
\nonumber
\\
&\le& -c\vc{v}(k),\qquad k=K+1,K+2,\dots.
\label{add-eqn-151026-03}
\end{eqnarray}
As a result, combining (\ref{add-eqn-151026-01}),
(\ref{add-eqn-151026-02}) and (\ref{add-eqn-151026-03}), we have
(\ref{ineqn-hat{Q}v}).  \qed

\section{Applications}\label{sec-applications}

In this section, we demonstrate the applicability of the error bounds
presented in Section~\ref{sec-bounds}. To this end, we consider a
queue with a batch Markovian arrival process (BMAP) \cite{Luca91} and
level-dependent departure rates.

We first describe the BMAP. The BAMP is controlled by an irreducible
continuous-time Markov chain $\{J(t);t \ge 0\}$ with a finite state
space $\bbD=\{1,2,\dots,d\}$, which is called the background Markov
chain. Let $N(t)$, $t \ge 0$, denote the total number of arrivals in
time interval $(0,t]$, where $N(0) = 0$. We assume that
$\{(N(t),J(t));t\ge0\}$ is a continuous-time Markov chain with state
space $\bbF = \bbZ_+ \times \bbD$ and conservative infinitesimal
generator $\vc{Q}_{\rm BMAP}$ given by
\[
\vc{Q}_{\rm BMAP}
=
\left(
\begin{array}{ccccc}
\vc{D}(0) & \vc{D}(1) & \vc{D}(2) & \vc{D}(3) & \cdots
\\
\vc{O} & \vc{D}(0)   & \vc{D}(1) & \vc{D}(2) & \cdots
\\
\vc{O} & \vc{O}   & \vc{D}(0)   & \vc{D}(1) & \cdots
\\
\vc{O} & \vc{O}   & \vc{O}   & \vc{D}(0)   & \ddots
\\
\vdots & \vdots   &\vdots    & \ddots   & \ddots
\end{array}
\right),
\]
where $\vc{D}(k):=(D_{i,j}(k))_{i,j\in\bbD}$, $k\in\bbN$, is an $d
\times d$ nonnegative matrix and
$\vc{D}(0):=(D_{i,j}(0))_{i,j\in\bbD}$ is a $d \times d$
$q$-matrix. It then follows that, for $t \ge 0$ and $\Delta t \ge 0$,
\begin{eqnarray}
\lefteqn{
\PP(
N(t+\Delta t) - N(t) = k, J(t+\Delta t) =j \mid J(t) = i
)
}
\qquad &&
\nonumber
\\
&=& 
\left\{
\begin{array}{ll}
1 + D_{i,i}(0)\Delta t + o(\Delta t), & k = 0,~i=j \in \bbD,
\\
{}
D_{i,j}(0) \Delta t + o(\Delta t), & k = 0,~i \neq j,~ i,j \in \bbD,
\\
{}
D_{i,j}(k)\Delta t + o(\Delta t), & k \in \bbN,~i,j \in \bbD,
\end{array}
\right.
\label{defn-BMAP}
\end{eqnarray}
where $f(x) = o(g(x))$ represents
$\lim_{x\downarrow0}|f(x)|/|g(x)|=0$. According to (\ref{defn-BMAP}),
the BMAP is characterized by $\{\vc{D}(k);k\in\bbZ_+\}$ and thus is
denoted by BMAP $\{\vc{D}(k);k\in\bbZ_+\}$.

It should be noted that the infinitesimal generator of the background
Markov chain $\{J(t);t\ge0\}$ is given by $\vc{D} :=
\sum_{k=0}^{\infty} \vc{D}(k)$, which is irreducible and
conservative. We define $\vc{\eta}$ as the stationary distribution
vector of $\vc{D}$ and define $\lambda$ as the arrival rate of BMAP
$\{\vc{D}(k);k \in \bbZ_+\}$, i.e.,
\begin{equation}
\lambda =
\vc{\eta}\sum_{k=1}^{\infty}k\vc{D}(k)\vc{e}.
\label{defn-lambda}
\end{equation}
To avoid triviality, we assume that $\lambda \in (0,\infty)$. 

Let $\widehat{\vc{D}}(z) = \sum_{k=0}^{\infty}z^k\vc{D}(k)$
and 
\[
r_D 
= \sup\left\{
z\ge0; \sum_{k=0}^{\infty}z^k\vc{D}(k)~\mbox{is finite}
\right\}.
\]
We then assume the following.
\begin{assumpt}\label{assumpt-D}
$r_D > 1$.
\end{assumpt}

\begin{rem}
Assumption~\ref{assumpt-D} holds if $\{\vc{D}(k);k\in\bbN\}$ is
light-tailed, i.e., $\vc{D}(k) \le r^{-k} \vc{\varLambda}$ for some $r
> 1$ and $d \times d$ finite nonnegative matrix $\vc{\varLambda}$.
\end{rem}

For further discussion, we define $\wh{\vc{E}}(z)$, $0 \le z < r_D$, as
\begin{equation}
\wh{\vc{E}}(z) 
= \vc{I} +
{ \widehat{\vc{D}}(z) \over \dm\max_{j\in\bbD}|D_{j,j}(0)| }
\ge \vc{O}.
\label{defn-E(z)}
\end{equation}
It follows from \cite[Theorem~8.3.1]{Horn90} that there exists
nonnegative vectors $\vc{\eta}(z)=(\eta(z,j))_{j\in\bbD}$ and
$\vc{u}(z)=(u(z,j))_{j\in\bbD}$ such that, for $0 \le z < r_D$,
\begin{align}
&&&&
\vc{\eta}(z)\widehat{\vc{E}}(z) &= \delta_E(z) \vc{\eta}(z),
&
\widehat{\vc{E}}(z)\vc{u}(z) &= \delta_E(z) \vc{u}(z),&&&&
\label{defn-u_E(z)}
\\
&&&&
\vc{\eta}(z)\vc{u}(z) &= 1, & \vc{u}(z) &\ge \vc{e},&&&&
\label{eqn-eta(z)*u(z)}
\end{align}
where $\delta_E(z)$ denotes the spectral radius of
$\wh{\vc{E}}(z)$. Since $\vc{D}=\widehat{\vc{D}}(1)$ is an irreducible
infinitesimal generator, the nonnegative matrix $\wh{\vc{E}}(z)$ is
also irreducible for all $0 < z < r_D$, which implies that, for $0 < z
< r_D$, $\delta_E(z)$ is the Perron-Frobenius eigenvalue of
$\wh{\vc{E}}(z)$ \cite[Theorem~8.4.4]{Horn90}.

We now define
$\delta_D(z)$, $0 \le z < r_D$, as
\[
\delta_D(z) = (\delta_E(z) - 1) \max_{j\in\bbD}|D_{j,j}(0)|,
\]
where $\delta_D(z)$ is increasing and convex because so is $\delta_E(z)$
\cite{King61}. It follows from
(\ref{defn-E(z)}) and (\ref{defn-u_E(z)}) that
\begin{eqnarray}
\vc{\eta}(z)\widehat{\vc{D}}(z) = \delta_D(z) \vc{\eta}(z),
\qquad
\widehat{\vc{D}}(z)\vc{u}(z) = \delta_D(z) \vc{u}(z).
\label{defn-u_D(z)}
\end{eqnarray}
From (\ref{eqn-eta(z)*u(z)}) and (\ref{defn-u_D(z)}), we have
\begin{equation}
\delta_D(z) = \vc{\eta}(z)\widehat{\vc{D}}(z)\vc{u}(z).
\label{eqn-delta_D(z)}
\end{equation}
Note here that $\delta_D(z)$, $0 < z < r_D$, is a simple
eigenvalue of $\widehat{\vc{D}}(z)$. In addition,
Assumption~\ref{assumpt-D} shows that $\widehat{\vc{D}}(z)$ is {\it
  analytic} in a neighborhood of the point $z=1$. Therefore,
$\delta_D(z)$, $\vc{\eta}(z)$ and $\vc{u}(z)$ are analytic at $z=1$
\cite[Theorem 2.1]{Andr93}. Differentiating (\ref{eqn-delta_D(z)}) at
$z=1$ and using $\vc{\eta}(1) = \vc{\eta}$ and $\vc{u} = \vc{e}$, we
obtain
\begin{equation}
\delta_D'(1) = \vc{\eta}\widehat{\vc{D}}'(1)\vc{e}
= \lambda,
\label{eqn-delta_D'(1)}
\end{equation}
where the last equality holds due to (\ref{defn-lambda}). Note also
that $\delta_D(1) = 0$ because $\vc{D}$ is an irreducible and
conservative infinitesimal generator.

Next, we explain the queueing model considered in the section. The system consists of an infinite buffer and a possibly infinite number
of servers (the number of servers is not specified for
flexibility). Customers arrive at the system according to BMAP
$\{\vc{D}(k);k\in\bbZ_+\}$. When there are $k$ customers in the system
at time $t$, one of them leaves the system, independently of the other
customers, in time interval $(t,t+\Delta t]$ with probability $\mu(k)
\Delta t + o(\Delta t)$, where $\mu(0) = 0$ and $\mu(k) \ge 0$ for $k
\in \bbN$. Note here that the departure of a customer is caused by the
completion of its service or the impatience with waiting for the
service. In addition, the system can suffer from disasters, which can
be regarded as {\it negative customers} that remove all the customers
in the system including themselves on their arrivals. Disasters arrive
at the system according to a Poisson process with rate $\psi \ge 0$,
which is independent of the arrival and departure processes of
(ordinary) customers. We now define $L(t)$, $t \ge 0$, as the queue
length, i.e., the number of customers in the systems at time $t$. It
then follows that the joint process $\{(L(t),J(t));t\ge0\}$ of the
queue length and the background state is a continuous-time Markov
chain with state space $\bbF$ and infinitesimal generator
$\vc{Q}=(q(k,i;\ell,j))_{(k,i),(\ell,j)\in\bbF}$ given by
\begin{equation}
\vc{Q}
=
\left(
\begin{array}{ccccc}
\vc{D}(0) & \vc{D}(1) & \vc{D}(2) & \vc{D}(3) & \cdots
\\
(\psi + \mu(1))\vc{I} & \vc{D}(0) - \wt{\mu}(1)\vc{I} & \vc{D}(1) & \vc{D}(2) & \cdots
\\
\psi\vc{I} & \mu(2)\vc{I} & \vc{D}(0) - \wt{\mu}(2)\vc{I} & \vc{D}(1) & \cdots
\\
\psi\vc{I} & \vc{O} & \mu(3)\vc{I} & \vc{D}(0) - \wt{\mu}(3)\vc{I} & \cdots
\\
\vdots & \vdots & \vdots & \vdots & \ddots
\end{array}
\right),
\label{MG1-type-Q}
\end{equation}
where $\wt{\mu}(k)= \psi + \mu(k)$ for $k \in \bbN$.
It is easy to see that $\vc{Q} \in \sfBM_d$.

\begin{rem}
Suppose that $\psi = 0$ and $\wt{\mu}(k) = \mu(k) = \mu+(k-1)\mu'$ for
$k\in\bbN$, where $\mu,\mu' \in (0,\infty)$. In this case, $\vc{Q}$ is the
infinitesimal generator of the joint process of a BMAP/M/1 queue with
impatient customers and no disasters, where service times follow an
exponential distribution with rate $\mu$ and ``patient times" in queue
are independent and identically distributed (i.i.d.) exponentially
with rate $\mu'$. In addition, if $\mu=\mu'$, then $\vc{Q}$ is the
infinitesimal generator of the joint process of a BMAP/M/$\infty$
queue with service rate $\mu$.
\end{rem}

In what follows, we consider two cases: (a) no disasters occur; and
(b) disasters can occur.

\subsection{Case where no disasters occur}\label{subsec-example-1}

We make the following assumption.
\begin{assumpt}\label{assumpt-example-1}
(i) $\vc{Q}$ is irreducible; (ii) $\psi = 0$; and (iii)
  $\inf_{k\in\bbN}\mu(k) > \lambda$.
\end{assumpt}

Let $G(z)$, $0 < z < r_D$, denote
\[
G(z) = \inf_{k\in\bbN}\mu(k) \cdot (1 - z^{-1}) - \delta_D(z),
\qquad 0 < z < r_D.
\]
Recall here that $\delta_D(1) = 0$, $\delta_D'(1) = \lambda$ (see
(\ref{eqn-delta_D'(1)})) and $\delta_D(z)$ is increasing and convex
for $z \in (0, r_D)$. It follows from these facts and
Assumption~\ref{assumpt-example-1} that $G(z)$ is continuous for $z \in (0, r_D)$ and
\begin{eqnarray*}
G(1) &=& 0,\qquad G'(1) = \inf_{k\in\bbN}\mu(k) - \lambda > 0,
\end{eqnarray*}
which show that there exists some $\beta > 1$ such
that
\begin{equation}
c :=
\inf_{k\in\bbN}\mu(k) (1 - \beta^{-1}) - \delta_D(\beta) > 0.
\label{defn-C_{ast}}
\end{equation}

Let $\vc{Q}(k;\ell)=(q(k,i;\ell,j))_{i,j\in\bbD}$ for $k,\ell\in\bbZ_+$. 
Let $\vc{v}(k) = \beta^k \vc{u}(\beta)$ for $k \in \bbZ_+$. From
(\ref{defn-u_D(z)}), (\ref{MG1-type-Q}) and $\psi = 0$, we then have
\begin{eqnarray}
\sum_{\ell=0}^{\infty}\vc{Q}(0;\ell)\vc{v}(\ell)
&=& 
\sum_{\ell=0}^{\infty}\vc{D}(\ell)\vc{v}(\ell) 
\nonumber
\\
&=& \widehat{\vc{D}}(\beta) \vc{u}(\beta) = \delta_D(\beta) \vc{u}(\beta)
\nonumber
\\
&=& -c\vc{v}(0) + (c + \delta_D(\beta))\vc{u}(\beta)
\nonumber
\\
&\le& -c\vc{v}(0) + b\vc{e},
\label{eqn-44}
\end{eqnarray}
where 
\begin{equation}
b = (c + \delta_D(\beta))\max_{j\in\bbD}u(\beta,j). 
\label{defn-b-02}
\end{equation}
In addition, from (\ref{defn-u_D(z)}), (\ref{defn-C_{ast}}), (\ref{MG1-type-Q})
and $\psi = 0$, we have, for $k \in \bbN$,
\begin{eqnarray}
\sum_{\ell=0}^{\infty}\vc{Q}(k;\ell)\vc{v}(\ell)
&=& 
\sum_{\ell=0}^{\infty}\vc{D}(\ell)\vc{v}(l+k) 
+ \mu(k)\vc{v}(k-1) - \mu(k)\vc{v}(k)
\nonumber
\\
&=& \widehat{\vc{D}}(\beta) \beta^k\vc{u}(\beta)
- \mu(k)(1 - \beta^{-1}) \beta^k\vc{u}(\beta)
\nonumber
\\
&=& [\delta_D(\beta) 
- \mu(k) (1 - \beta^{-1})] \beta^k\vc{u}(\beta)
\nonumber
\\
&\le&  -c \vc{v}(k).
\label{eqn-43}
\end{eqnarray}
The inequalities (\ref{eqn-44}) and (\ref{eqn-43}) imply that
Assumptions~\ref{assumpt-basic} and \ref{assumpt-geo} hold for
$\wt{\vc{Q}}:=\vc{Q}$, $c \in (0,\infty)$ and $b \in (0,\infty)$ given
by (\ref{MG1-type-Q}), (\ref{defn-C_{ast}}) and (\ref{defn-b-02}),
respectively. As a result, it follows from
Theorem~\ref{thm-continu-geo} and Remark~\ref{rem-minimum-bound} that
\begin{eqnarray*}
\left\| \presub{n}\vc{\pi}_n - \vc{\pi} \right\|
&\le& {4b \over c} 
(t_1^{\ast}(n) + 1 ) \exp\{-t_1^{\ast}(n)\},\qquad n \in \bbN,
\end{eqnarray*}
where
\begin{equation*}
t_1^{\ast}(n)
= 
\max\left\{
-
\log\left( 
{1 \over 2c} 
\sum_{j\in\bbD}{\mu(n) + | D_{j,j}(0)| \over u(\beta,j)} \beta^{-n}
\right),
0 \right\},
\qquad n \in \bbN.
\end{equation*}

\subsection{Case where disasters can occur}

Instead of Assumption~\ref{assumpt-example-1}, we assume the
following.
\begin{assumpt}\label{assumpt-example-2}
(i) $\vc{Q}$ is irreducible; (ii) $\psi > 0$; and there exists some $K
  \in \bbZ_+$ such that
\begin{equation}
c' := 
\inf_{k \ge K+1}
\left\{ 
\mu(k) (1 - \beta^{-1}) + \psi(1 - \beta^{-k}) - \delta_D(\beta) 
\right\} > 0.
\label{defn-example-c}
\end{equation}
\end{assumpt}

\begin{rem}
Assumption~\ref{assumpt-example-2} holds if
\[
\liminf_{k\to\infty} \mu(k) 
> {\delta_D(\beta) - \psi\over 1 - \beta^{-1}}.
\]
\end{rem}

Let $\vc{v}'(k) = \beta^k
\vc{u}(\beta)$ for $k \in \bbZ_+$ and
\begin{eqnarray}
b' &=& \max_{0 \le k \le K}
\left\{
[c' + \delta_D(\beta) 
- \mu(k) (1 - \beta^{-1}) - \psi(1 - \beta^{-k})]\beta^k 
\right\}
\nonumber
\\
&& {} \quad \times 
\max_{j\in\bbD}u(\beta,j).
\label{defn-example-b}
\end{eqnarray}
Proceeding as in the derivation of (\ref{eqn-44}) and (\ref{eqn-43}),
we have
\begin{eqnarray}
\sum_{\ell=0}^{\infty}\vc{Q}(0;\ell)\vc{v}'(\ell)
&=& -c'\vc{v}'(0) + (c' + \delta_D(\beta))\vc{u}(\beta),
\label{eqn-44b}
\end{eqnarray}
and, for $k \in \bbN$,
\begin{eqnarray}
\lefteqn{
\sum_{\ell=0}^{\infty}\vc{Q}(k;\ell)\vc{v}'(\ell)
}
\quad
\nonumber
\\
&=& [\delta_D(\beta) 
- \mu(k) (1 - \beta^{-1})] \beta^k\vc{u}(\beta)
+ \psi\vc{v}'(0) - \psi\vc{v}'(k)
\nonumber
\\
&=& [\delta_D(\beta) 
- \mu(k) (1 - \beta^{-1}) - \psi(1 - \beta^{-k})]\beta^k\vc{u}(\beta)
\nonumber
\\
&=& -c'\vc{v}'(k) 
+ [c' + \delta_D(\beta) 
- \mu(k) (1 - \beta^{-1}) - \psi(1 - \beta^{-k})]\beta^k\vc{u}(\beta).
\qquad
\label{eqn-43b}
\end{eqnarray}
Applying (\ref{defn-example-c}) and (\ref{defn-example-b}) to
(\ref{eqn-44b}) and (\ref{eqn-43b}) yields
\begin{align}
\sum_{\ell=0}^{\infty}\vc{Q}(k;\ell)\vc{v}'(\ell)
&\le -c'\vc{v}'(k) + b' \vc{e}, & k &=0,1,\dots,K,
\label{add-eqm-151105-01}
\\
\sum_{\ell=0}^{\infty}\vc{Q}(k;\ell)\vc{v}'(\ell)
&\le -c'\vc{v}'(k), & k &=K+1,K+2,\dots.
\label{add-eqm-151105-02}
\end{align}
It follows from (\ref{add-eqm-151105-01}) and
(\ref{add-eqm-151105-02}) that (\ref{ineqn-hat{Q}v'}) in
Corollary~\ref{thm-continu-02} holds for $\wt{\vc{Q}}:=\vc{Q}$, $c'
\in (0,\infty)$ and $b' \in (0,\infty)$ given by (\ref{MG1-type-Q}),
(\ref{defn-example-c}) and (\ref{defn-example-b}), respectively, where
$K \in \bbZ_+$ is fixed such that (\ref{defn-example-c}) holds.  Note
here that $\wt{\vc{Q}}(K,0)\vc{e} = \vc{Q}(K,0)\vc{e} = \psi\vc{e} >
\vc{0}$. Thus, according to (\ref{add-defn-B}), fix $B =
b'\psi^{-1}$. Furthermore, according to
(\ref{add-defn-c}), (\ref{add-defn-b}) and (\ref{add-defn-v}), fix
\begin{eqnarray*}
c &=& {c' \over 1 + b'\psi^{-1}},
\label{add-defn-c-example}
\\
b &=& b' \left(1 - \psi^{-1} \min_{i\in\bbD}\sum_{j\in\bbD}D_{i,j}(0) \right),
\label{add-defn-b-example}
\\
v(k,i) &=& 
\left\{
\begin{array}{ll}
v'(0,i), & k=0,~i\in\bbD,
\\
v'(k,i) + b'\psi^{-1}, & k\in\bbN,~i\in\bbD.
\end{array}
\right.
\label{add-defn-v-example}
\end{eqnarray*}
Note also that 
\begin{eqnarray*}
\wt{q}(n,j;n,j) 
&=& q(n,j;n,j) = -\wt{\mu}(n) + D_{j,j}(0)
\nonumber
\\
&=& -\psi - \mu(n) - | D_{j,j}(0) |.
\end{eqnarray*}
Consequently, if follows from Corollary~\ref{thm-continu-02} that, for
$n \in \bbN$,
\begin{eqnarray*}
\left\| \presub{n}\vc{\pi}_n - \vc{\pi} \right\|
&\le& 
{4b' (1 + b'\psi^{-1}) \over c'}
\left(1 - \psi^{-1} \min_{i\in\bbD}\sum_{j\in\bbD}D_{i,j}(0) \right)
\nonumber
\\
&& {} \times 
(t_2^{\ast}(n) + 1 ) \exp\{-t_2^{\ast}(n)\},
\end{eqnarray*}
where $t_2^{\ast}(n)$, $n \in \bbN$, is given by
\begin{equation*}
t_2^{\ast}(n)
= \max\left\{
-
\log\left( 
{1 + b'\psi^{-1} \over 2c'} 
\sum_{j\in\bbD}
{
\psi + \mu(n) + |D_{j,j}(0)| \over u(\beta,j) + b' \psi^{-1} \beta^{-n}
} 
\beta^{-n}
\right),0
\right\}. 
\end{equation*}

\appendix

\section{Pathwise ordering}\label{sec-pathwise-ordering}

This appendix presents two lemmas on the pathwise ordering associated
with BMMCs. For this purpose, we consider two Markov chains
$\{(X_t,J_t);t\ge0\}$ and $\{(\wt{X}_t,\wt{J}_t);t\ge0\}$ with
infinitesimal generators
$\vc{Q}=(q(k,i;\ell,j))_{(k,i),(\ell,j)\in\bbF^{\leqslant N}}$ and
$\wt{\vc{Q}}=(\wt{q}(k,i;\ell,j))_{(k,i),(\ell,j)\in\bbF^{\leqslant
    N}}$, respectively, which have the same state space
$\bbF^{\leqslant N}$.  In what follows, we do not necessarily assume
that $\vc{Q}$ and $\wt{\vc{Q}}$ are ergodic.

\begin{lem}[Pathwise ordering of a BMMC]\label{lem1-continuous-ordering}
If $\vc{Q}$ is regular and $\vc{Q} \in \sfBM_d$, then there exist two
regular-jump Markov chains $\{(X'_t,J'_t);t\ge0\}$ and
$\{(X''_t,J''_t);t\ge0\}$ with infinitesimal generator $\vc{Q}$ on a
common probability space $(\Omega, \calF, \PP)$ such that
\begin{equation*}
X'_t(\omega) \le X''_t(\omega), \quad J'_t(\omega) = J''_t(\omega) 
\quad \mbox{for all}~ t > 0,
\end{equation*}
for any $\omega \in \Omega$ with $ X'_0(\omega) \le X''_0(\omega)$ and
$J'_0(\omega) = J''_0(\omega)$.
\end{lem}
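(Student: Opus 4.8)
The plan is to build the coupling by uniformizing finite northwest-corner truncations, applying a monotone (Strassen-type) coupling at each uniformized step, and then passing to the limit using the regularity of $\vc{Q}$. The key starting observation is Lemma~\ref{prop-xi(i,j)}: since $\vc{Q}\in\sfBM_d$, the phase process is autonomous, with $\sum_{\ell}q(k,i;\ell,j)=\xi(i,j)$ independent of the level $k$. This is exactly what will let me keep the two phase coordinates identical throughout the construction, so that the only thing to control is the ordering of the levels. I would work with the truncated generators $\vc{Q}^{\leqslant n}$ of (\ref{defn-q^{<n}(k,i;l,j)}); these are bounded, and the argument in the proof of Lemma~\ref{lem-Q} shows that each $\vc{Q}^{\leqslant n}\in\sfBM_d$.

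First I would fix $n$ and uniformize: choose $\sigma_n\in(0,\infty)$ so that $\vc{P}^{\leqslant n}:=\vc{I}+\sigma_n^{-1}\vc{Q}^{\leqslant n}$ is stochastic, whence $\vc{P}^{\leqslant n}\in\sfBM_d$. I then construct coupled discrete-time chains step by step on a common probability space. Suppose at some step the coupled state is $((k',i),(k'',i))$ with equal phase and $k'\le k''$; then $\delta_{(k',i)}\prec_d\delta_{(k'',i)}$, so Proposition~\ref{prop-2} yields $\delta_{(k',i)}\vc{P}^{\leqslant n}\prec_d\delta_{(k'',i)}\vc{P}^{\leqslant n}$. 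For probability vectors, $\vc{\mu}\prec_d\vc{\eta}$ forces equal phase marginals together with stochastic ordering of the conditional level laws within each phase, so a monotone coupling produces the next pair of states with identical phase and ordered levels. Embedding this discrete chain into continuous time by a single Poisson($\sigma_n$) clock gives coupled truncated processes $\{(X_t'^{\leqslant n},J_t'^{\leqslant n})\}$ and $\{(X_t''^{\leqslant n},J_t''^{\leqslant n})\}$, each with marginal law $\exp\{\vc{Q}^{\leqslant n}t\}$, satisfying $X_t'^{\leqslant n}\le X_t''^{\leqslant n}$ and $J_t'^{\leqslant n}=J_t''^{\leqslant n}$ for all $t\ge0$ whenever the initial states are so ordered.

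It then remains to let $n\to\infty$. Because $\vc{Q}$ is regular, $t_n\to\infty$ w.p.1 (see (\ref{lim-P(t_n=infty)})), so on any finite horizon the truncated chains eventually coincide with the untruncated one, and their marginal transition functions converge as in (\ref{lim-p^{<n;(t)}(k,i;m,j)}). I would show that the joint (coupled) laws are tight on path space, extract a convergent subsequence, and identify the limit as a coupling of two copies of the full chain; since $\{X_t'\le X_t''\}$ and $\{J_t'=J_t''\}$ are closed conditions, they survive the passage to the limit, while regularity guarantees that the limiting marginals are the genuine regular-jump $\vc{Q}$-process rather than a sub-Markovian or exploded one. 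A Kolmogorov consistency argument extends the construction from $[0,T]$ to $[0,\infty)$.

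The main obstacle is precisely this passage to the limit: establishing that the monotone coupling built for the bounded truncations converges to a coupling of the unbounded-generator chain, with the pathwise order intact and the correct (non-defective) marginal law. The bounded, finite-state steps are routine; all the subtlety---tightness on path space and the use of non-explosiveness to prevent mass escaping to infinity---is concentrated here. I expect the order relation to transfer to the limit without difficulty because it is closed, so the real work is the identification of the limiting marginals via the regularity of $\vc{Q}$.
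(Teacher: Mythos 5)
Your proposal is correct in outline but takes a genuinely different route from the paper's. The paper discretizes \emph{time}: it invokes Lemma~\ref{lem-Q} to get $\vc{P}^{(\delta)}\in\sfBM_d$ for every $\delta>0$, applies the discrete-time pathwise-ordering result \cite[Lemma~A.1]{Masu15-ADV} to the $\delta$-skeleton chains, interpolates them as step processes, and lets $\delta\downarrow0$; regularity enters only through $\lim_{\delta\downarrow0}\vc{P}^{(\delta)}=\vc{I}$, which identifies the limiting transition law as $\vc{P}^{(t)}$. You instead truncate \emph{space}, uniformize the bounded generators $\vc{Q}^{\leqslant n}$, couple the uniformized discrete chains monotonically, and let $n\to\infty$. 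Both arguments ultimately rest on the same Strassen-type coupling for block-monotone stochastic matrices and on regularity for the final limit. What the paper's route buys is that every approximating pair already has the \emph{exact} marginal law of the target chain (since $\vc{P}^{(\delta)}$ is the true transition function), so the only thing to control in the limit is the refinement of the time mesh; your route re-derives inside the coupling the truncation/uniformization argument that the paper has already packaged into the proof of Lemma~\ref{lem-Q}, and in exchange you must identify the weak limit of a sequence of \emph{joint} laws whose marginals are only approximately correct. That identification (tightness of the couplings on path space, closedness of the event $\{X'_t\le X''_t,\ J'_t=J''_t \mbox{ for all } t\}$ under the limit, and non-defectiveness of the limiting marginals) is exactly where you locate the remaining work, and it does go through: tightness of the pairs follows from tightness of the marginals, the order relation is preserved at the level of finite-dimensional distributions at fixed times and extends to all $t$ by right-continuity, and (\ref{lim-p^{<n;(t)}(k,i;m,j)}) gives the correct marginals. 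It is, however, heavier machinery than the paper needs; to be fair, the paper's own passage $\delta\downarrow0$ is also asserted rather than fully justified, so the two proofs are comparably detailed at their respective limit steps.
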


\noindent
\begin{proof}
We fix $\delta > 0$ arbitrarily.  It follows from Lemma~\ref{lem-Q}
that $\vc{P}^{(\delta)} \in \sfBM$. Therefore, according to
\cite[Lemma~A.1]{Masu15-ADV}, we can construct two discrete-time
Markov chains $\{(Y_{\delta,\nu}',H_{\delta,\nu}');\nu\in\bbZ_+\}$ and
$\{(Y_{\delta,\nu}'',H_{\delta,\nu}'');\nu\in\bbZ_+\}$ with transition
probability matrix $\vc{P}^{(\delta)}$ on a probability space
$(\Omega, \calF, \PP)$ such that, for any $\omega \in \Omega$ with
$Y'_{\delta,0}(\omega) \le Y''_{\delta,0}(\omega)$ and
$H'_{\delta,0}(\omega) = H''_{\delta,0}(\omega)$,
\[
Y'_{\delta,\nu}(\omega) \le Y''_{\delta,\nu}(\omega),\quad 
H'_{\delta,\nu}(\omega) = H''_{\delta,\nu}(\omega)\quad 
\mbox{for all $\nu \in \bbN$}.
\]
Using the two Markov chains, we define two stochastic processes
$\{(X'_{\delta,t},J'_{\delta,t});t\ge0\}$ and
$\{(X''_{\delta,t},J''_{\delta,t});t\ge0\}$ on the probability space
$(\Omega, \calF, \PP)$ as follows:
\begin{align*}
&&&&
X'_{\delta,t} &= Y'_{\delta,\nu}, &
J'_{\delta,t} &= H'_{\delta,\nu}, & 
\nu\delta &\le t < (\nu+1)\delta,~\nu\in\bbZ_+,&&&&
\\
&&&&
X''_{\delta,t} &= Y''_{\delta,\nu}, &
J''_{\delta,t} &= H''_{\delta,\nu}, & 
\nu\delta &\le t < (\nu+1)\delta,~\nu\in\bbZ_+.&&&&
\end{align*}
It then holds that, for any $\omega \in \Omega$ with
$X'_{\delta,0}(\omega) \le X''_{\delta,0}(\omega)$ and
$J'_{\delta,0}(\omega) = J''_{\delta,0}(\omega)$,
\begin{equation}
X'_{\delta,t}(\omega) \le X''_{\delta,t}(\omega),\quad 
J'_{\delta,t}(\omega) = J''_{\delta,t}(\omega)\quad \mbox{for all $t > 0$}.
\label{pathwise-order-01}
\end{equation}
We also define 
\[
\calG_{\delta,s}' = \bigcup_{\alpha \ge \delta}\calF_{\alpha,s}',
\quad
\calG_{\delta,s}'' = \bigcup_{\alpha \ge \delta}\calF_{\alpha,s}'',
\quad \delta > 0,
\]
where $\calF_{\alpha,s}'$ and $\calF_{\alpha,s}''$, $\alpha > 0$, are
the $\sigma$-algebras generated by $\{(X'_{\alpha,u}, J'_{\alpha,u});0
\le u \le s\}$ and $\{(X''_{\alpha,u}, J''_{\alpha,u});0 \le u \le
s\}$, respectively.  Note here that
$\{(X'_{\delta,t},J'_{\delta,t});t\ge0\}$ is a semi-Markov process
with the embedded Markov chain
$\{(X_{\delta,\nu\delta}',J_{\delta,\nu\delta}')=(Y_{\delta,\nu}',H_{\delta,\nu}');\nu\in\bbZ_+\}$. Thus,
for $s \in [\nu\delta, (\nu+1)\delta)$, $t \in [(\nu+1)\delta,
    (\nu+2)\delta)$ and $\nu\in\bbZ_+$,
\begin{eqnarray}
\lefteqn{
\PP( X'_{\delta,t} = \ell, J'_{\delta,t} = j 
\mid X'_{\delta,s} = k, J'_{\delta,s} = i, \calG_{\delta,s}')
}
\quad&&
\nonumber
\\
&=& \PP( X'_{\delta,t} = \ell, J'_{\delta,t} = j 
\mid     X'_{\delta,s} = k, J'_{\delta,s} = i)
\nonumber
\\
&=& \PP( Y'_{\delta,\nu+1} = \ell, H'_{\delta,\nu+1} = j 
\mid     Y'_{\delta,\nu} = k, H'_{\delta,\nu} = i)
\nonumber
\\
&=& p^{(\delta)}(k,i;\ell,j), \qquad  (k,i;\ell,j) \in\bbF^2.
\label{transition-prob-(X',J')}
\end{eqnarray}
It follows from (\ref{transition-prob-(X',J')}) and (\ref{CK-EQ})
that, for all $t \ge s \ge 0$,
\begin{eqnarray}
\lefteqn{
\PP( X'_{\delta,t} = \ell, J'_{\delta,t} = j 
\mid X'_{\delta,s} = k, J'_{\delta,s} = i, \calG_{\delta,s}')
}
\quad&&
\nonumber
\\
&=& p^{( \{n_{\delta,t} -n_{\delta,s}\} \delta )}(k,i;\ell,j),
\qquad (k,i;\ell,j) \in\bbF^2,
\label{eqn-MC'-law}
\end{eqnarray}
where $n_{\delta,u} = \sup\{n\in\bbZ_+; n\delta \le u\}$ for $u \ge 0$.
Similarly, for all $t \ge s \ge 0$,
\begin{eqnarray}
\lefteqn{
\PP( X''_{\delta,t} = \ell, J''_{\delta,t} = j 
\mid X''_{\delta,s} = k, J''_{\delta,s} = i, \calG_{\delta,s}'')
}
\quad&&
\nonumber
\\
&=& p^{( \{n_{\delta,t} - n_{\delta,s}\} \delta )}(k,i;\ell,j),
\qquad (k,i;\ell,j) \in\bbF^2.
\label{eqn-MC''-law}
\end{eqnarray}

We now define
\begin{eqnarray*}
\{(X'_t,J'_t);t\ge0\} 
&=&
\lim_{\delta \downarrow0}\{(X'_{\delta,t},J'_{\delta,t});t\ge0\},
\\
\{(X''_t,J''_t);t\ge0\} 
&=& \lim_{\delta
  \downarrow0}\{(X''_{\delta,t},J''_{\delta,t});t\ge0\}.
\end{eqnarray*}
It then follows from (\ref{pathwise-order-01}) that, for any $\omega
\in \Omega$ with $X'_0(\omega) \le X''_0(\omega)$ and $J'_0(\omega) =
J''_0(\omega)$,
\[
X'_t(\omega) \le X''_t(\omega),\quad 
J'_t(\omega) = J''_t(\omega)\quad \mbox{for all $t > 0$}.
\]
Recall here that $\vc{Q}$ is regular and thus $\lim_{\delta \downarrow
  0}\vc{P}^{(\delta)} = \vc{I}$ (see
subsection~\ref{subsec-BSMC}). Therefore, we have
\begin{equation}
\lim_{\delta \downarrow 0}\vc{P}^{( \{n_{\delta,t} -n_{\delta,s}\} \delta )} =
\vc{P}^{(t-s)},\qquad 0 \le s \le t.
\label{add-eqn-151113-01}
\end{equation}
It follows from (\ref{eqn-MC'-law})--(\ref{add-eqn-151113-01}) and the
continuity of probability \cite[Chapter 1, Theorem 1.1]{Brem99} that,
for all $t \ge s \ge 0$ and $(k,i;\ell,j) \in\bbF^2$,
\begin{eqnarray*}
\PP( X'_t = \ell, J'_t = j 
\mid X'_s = k, J'_s = i, \cup_{\delta>0}\calG_{\delta,s}')
&=& p^{(t-s)}(k,i;\ell,j),
\\
\PP( X''_t = \ell, J''_t = j 
\mid X''_s = k, J''_s = i, \cup_{\delta>0}\calG_{\delta,s}'')
&=& p^{(t-s)}(k,i;\ell,j).
\end{eqnarray*}
As a result, $\{(X'_t, J'_t)\}$ and $\{(X''_t,
J''_t)\}$ are regular-jump Markov chains characterized by the common
transition matrix function $\vc{P}^{(t)}$ with the regular
infinitesimal generator $\vc{Q}$. The proof is completed.
\end{proof}

\begin{lem}[Pathwise ordering from the block-wise dominance relation]
\label{lem2-continuous-ordering}

Suppose that $\vc{Q} \prec_{d} \wt{\vc{Q}}$ and either $\vc{Q} \in
\sfBM_d$ or $\wt{\vc{Q}} \in \sfBM_d$. Under these conditions, the
following are true:
\begin{enumerate}
\item If $N < \infty$, i.e., $\vc{Q}$ and $\wt{\vc{Q}}$ are finite
  infinitesimal generators, then there exist two regular-jump Markov
  chains $\{(X'_t,J'_t);t\ge0\}$ and $\{(\wt{X}'_t,\wt{J}'_t);t\ge0\}$
  with infinitesimal generators $\vc{Q}$ and $\wt{\vc{Q}}$,
  respectively, on a common probability space $(\PP,\calF,\Omega)$ such
  that
\begin{equation*}
X'_t(\omega) \le \wt{X}'_t(\omega),\quad 
J'_t(\omega) = \wt{J}'_t(\omega) \quad 
\mbox{for all $t > 0$},
\end{equation*}
for any $\omega \in \Omega$ with $ X'_0(\omega) \le \wt{X}'_0(\omega)$
and $J'_0(\omega) = \wt{J}'_0(\omega)$.
\item If $\wt{\vc{Q}}$ is regular, then statement (a) holds
  without $N < \infty$.
\end{enumerate}
\end{lem}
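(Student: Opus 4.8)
The plan is to reduce both parts to the discrete-time pathwise-ordering theory of \cite{Masu15-ADV} by freezing a time step $\delta>0$ and then passing to the limit $\delta\downarrow0$ exactly as in the proof of Lemma~\ref{lem1-continuous-ordering}. For each fixed $\delta>0$ I would first verify the two discrete-time hypotheses
\[
\vc{P}^{(\delta)}\prec_d\wt{\vc{P}}^{(\delta)},
\qquad
\vc{P}^{(\delta)}\in\sfBM_d \ \text{or}\ \wt{\vc{P}}^{(\delta)}\in\sfBM_d,
\]
and then invoke the discrete-time construction of \cite{Masu15-ADV} for two block-wise-dominated chains, one of which is block-monotone, to obtain on a common probability space two discrete-time chains with transition matrices $\vc{P}^{(\delta)}$ and $\wt{\vc{P}}^{(\delta)}$ whose level components are ordered and whose phase components coincide. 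Interpolating these chains by step functions that are constant on each $[\nu\delta,(\nu+1)\delta)$ produces two continuous-time processes obeying the required ordering, and the only remaining work is the limit $\delta\downarrow0$. Statements (a) and (b) differ solely in how the block-wise dominance $\vc{P}^{(\delta)}\prec_d\wt{\vc{P}}^{(\delta)}$ is established.

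For statement (a), finiteness of $N$ makes $\vc{Q}$ and $\wt{\vc{Q}}$ bounded, hence regular, so $\vc{P}^{(\delta)}=\exp\{\vc{Q}\delta\}$ and $\wt{\vc{P}}^{(\delta)}=\exp\{\wt{\vc{Q}}\delta\}$. I would establish $\vc{P}^{(\delta)}\prec_d\wt{\vc{P}}^{(\delta)}$ directly by uniformization, without appealing to Lemma~\ref{lem-Q-3}: choosing $\varsigma\in(0,\infty)$ so large that $\vc{I}+\varsigma^{-1}\vc{Q}$ and $\vc{I}+\varsigma^{-1}\wt{\vc{Q}}$ are stochastic, the dominance $\vc{Q}\vc{T}_d^{\leqslant N}\le\wt{\vc{Q}}\vc{T}_d^{\leqslant N}$ together with \cite[Proposition~2.3~(b)]{Masu15-ADV} yields $(\vc{I}+\varsigma^{-1}\vc{Q})^m\vc{T}_d^{\leqslant N}\le(\vc{I}+\varsigma^{-1}\wt{\vc{Q}})^m\vc{T}_d^{\leqslant N}$ for every $m$, and summing the Poisson series exactly as in the proof of Lemma~\ref{lem-Q-3} gives $\vc{P}^{(\delta)}\vc{T}_d^{\leqslant N}\le\wt{\vc{P}}^{(\delta)}\vc{T}_d^{\leqslant N}$. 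The second hypothesis follows from Lemma~\ref{lem-Q}: regularity of $\vc{Q}$ (resp.\ $\wt{\vc{Q}}$) together with $\vc{Q}\in\sfBM_d$ (resp.\ $\wt{\vc{Q}}\in\sfBM_d$) gives $\vc{P}^{(\delta)}\in\sfBM_d$ (resp.\ $\wt{\vc{P}}^{(\delta)}\in\sfBM_d$).

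For statement (b), where $N=\infty$ and the generators may be unbounded, the uniformization step is unavailable, and this is where Lemma~\ref{lem-Q-3} enters. Since $\wt{\vc{Q}}$ is regular and Assumption~\ref{assumpt-Q_1-Q_2} holds, Lemma~\ref{lem-Q-3}~(a) shows $\vc{Q}$ is regular, and Lemma~\ref{lem-Q-3}~(b) gives $\vc{P}^{(t)}\prec_d\wt{\vc{P}}^{(t)}$ for all $t\ge0$, hence in particular $\vc{P}^{(\delta)}\prec_d\wt{\vc{P}}^{(\delta)}$. The second hypothesis is again furnished by Lemma~\ref{lem-Q}, using the regularity of whichever of $\vc{Q}$, $\wt{\vc{Q}}$ is block-monotone (regularity of $\vc{Q}$ being exactly what Lemma~\ref{lem-Q-3}~(a) supplies). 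With both hypotheses in hand, the discrete-time construction and the interpolation proceed verbatim as in part (a).

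The main obstacle, common to both parts, is the passage $\delta\downarrow0$: I must argue that the interpolated step processes converge to genuine regular-jump Markov chains with infinitesimal generators $\vc{Q}$ and $\wt{\vc{Q}}$, while the ordering survives the limit. This is handled exactly as in the proof of Lemma~\ref{lem1-continuous-ordering}: regularity (automatic in (a), and guaranteed for both chains in (b)) gives $\lim_{\delta\downarrow0}\vc{P}^{(\delta)}=\vc{I}$, so the interpolated transition laws converge to $\vc{P}^{(t-s)}$ via the Chapman--Kolmogorov equation and the continuity of probability, and likewise for $\wt{\vc{P}}^{(t)}$; the inequalities $X'_{\delta,t}\le\wt{X}'_{\delta,t}$ and the equalities $J'_{\delta,t}=\wt{J}'_{\delta,t}$ are closed conditions preserved under the pathwise limit. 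I expect the careful bookkeeping of the joint law on the common probability space, rather than any single estimate, to be the delicate point.
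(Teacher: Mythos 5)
Your proposal is correct and follows essentially the same route as the paper: establish $\vc{P}^{(\delta)}\prec_d\wt{\vc{P}}^{(\delta)}$ by uniformization in case (a) and by Lemma~\ref{lem-Q-3}~(b) in case (b), obtain block monotonicity of the transition function from Lemma~\ref{lem-Q}, invoke the discrete-time pathwise construction of \cite[Lemma~A.2]{Masu15-ADV} for each fixed $\delta$, and pass to the limit $\delta\downarrow0$ as in Lemma~\ref{lem1-continuous-ordering}. You even correctly isolate the logical point that part (a) must avoid Lemma~\ref{lem-Q-3} to prevent circularity, which matches the paper's own remark on the dependency structure.
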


\noindent
{\it Proof of Lemma~\ref{lem2-continuous-ordering}.}~ We first prove
statement (a). Since $\vc{Q}$ and $\wt{\vc{Q}}$ are finite infinitesimal generators, we can readily show that
\[
\exp\{\vc{Q} t\} \prec_d \exp\{ \wt{\vc{Q}} t\},
\]
in a way similar to the derivation of
(\ref{ineqn-exp{Q_1^{<n}}-exp{Q_2^{<n}}}). Thus, we have $\vc{P}^{(t)}
\prec_d \wt{\vc{P}}^{(t)}$. Furthermore, since either $\vc{Q}
\in \sfBM_d$ or $\wt{\vc{Q}} \in \sfBM_d$, it follows from
Lemma~\ref{lem-Q} that either $\vc{P}^{(t)} \in \sfBM_d$ or
$\wt{\vc{P}}^{(t)} \in \sfBM_d$ for all $t \ge 0$.

We now fix $\delta > 0$ arbitrarily. According to
\cite[Lemma~A.2]{Masu15-ADV}, we can construct two discrete-time
Markov chains $\{(Y'_{\delta,\nu},H'_{\delta,\nu});\nu\in\bbZ_+\}$ and
$\{(\wt{Y}'_{\delta,\nu},\wt{H}'_{\delta,\nu});\nu\in\bbZ_+\}$ with
transition probability matrices $\vc{P}^{(\delta)}$ and
$\wt{\vc{P}}^{(\delta)}$, respectively, on the common probability space
$(\Omega, \calF, \PP)$ such that, for any $\omega \in \Omega$ with
$Y'_{\delta,0}(\omega) \le \wt{Y}'_{\delta,0}(\omega)$ and
$H'_{\delta,0}(\omega) = \wt{H}'_{\delta,0}(\omega)$,
\[
Y'_{\delta,\nu}(\omega) \le \wt{Y}'_{\delta,\nu}(\omega),\quad 
H'_{\delta,\nu}(\omega) = \wt{H}'_{\delta,\nu}(\omega)\quad 
\mbox{for all $\nu \in \bbN$}.
\]
We then define two stochastic processes
$\{(X'_{\delta,t},J'_{\delta,t});t\ge0\}$ and
$\{(\wt{X}'_{\delta,t},\wt{J}'_{\delta,t});t\ge0\}$ on
the probability space $(\Omega, \calF, \PP)$ as follows:
\begin{align*}
&&&&
X'_{\delta,t} &= Y'_{\delta,\nu}, &
J'_{\delta,t} &= H'_{\delta,\nu}, & 
\nu\delta &\le t < (\nu+1)\delta,~\nu\in\bbZ_+,&&&&
\\
&&&&
\wt{X}'_{\delta,t} &= \wt{Y}'_{\delta,\nu}, &
\wt{J}'_{\delta,t} &= \wt{H}'_{\delta,\nu}, & 
\nu\delta &\le t < (\nu+1)\delta,~\nu\in\bbZ_+.&&&&
\end{align*}
By definition, for any $\omega \in \Omega$ with $X'_{\delta,0}(\omega)
\le \wt{X}'_{\delta,0}(\omega)$ and $J'_{\delta,0}(\omega) =
\wt{J}'_{\delta,0}(\omega)$,
\[
X'_{\delta,t}(\omega) \le \wt{X}'_{\delta,t}(\omega),\quad 
J'_{\delta,t}(\omega) = \wt{J}'_{\delta,t}(\omega)\quad
\mbox{for all $t > 0$}.
\]

Let
\begin{eqnarray*}
\{(X'_t,J'_t);t\ge0\} 
&=& \lim_{\delta \downarrow0}
\{(X'_{\delta,t},J'_{\delta,t});t\ge0\},
\\
\{(\wt{X}'_t,\wt{J}'_t);t\ge0\} 
&=& \lim_{\delta \downarrow0}
\{(\wt{X}'_{\delta,t},\wt{J}'_{\delta,t});t\ge0\}.
\end{eqnarray*}
Clearly, for any $\omega \in \Omega$ with $X'_0(\omega) \le
\wt{X}'_0(\omega)$ and $J'_0(\omega) = \wt{J}'_0(\omega)$,
\[
X'_t(\omega) \le \wt{X}'_t(\omega),\quad 
J'_t(\omega) = \wt{J}'_t(\omega)\quad
\mbox{for all $t > 0$}.
\]
In addition, proceeding as in the proof of Lemma~\ref{lem1-continuous-ordering}, we
can readily prove that $\{(X'_t,J'_t)\}$ and
$\{(\wt{X}'_t,\wt{J}'_t)\}$ are regular-jump Markov chains with
infinitesimal generators $\vc{Q}$ and $\wt{\vc{Q}}$, respectively. As
a result, statement (a) is proved.

As for statement (b), it follows from Lemma~\ref{lem-Q-3}~(b) that
$\vc{P}^{(t)} \prec_d \wt{\vc{P}}^{(t)}$ for all $t \ge 0$. Therefore,
we can prove statement (b) in the same way as the proof of statement
(a). The details are omitted. \qed

\begin{rem}\label{rem-first-meeting-lasts-forever}
The pathwise-ordered continuous-time Markov chains that appear in Lemmas~\ref{lem1-continuous-ordering} and \ref{lem2-continuous-ordering} can be generated via their respective {\it skeletons} (see the proofs of the lemmas), which are constructed in the way described in the proofs of Lemmas A.1 and A.2 of \cite{Masu15-ADV}. As shown in those proofs, the pathwise-ordered discrete-time  Markov chains therein are defined by the update functions $F^{-1}(u \mid k,i,j)$ and $\wt{F}^{-1}(u \mid k,i,j)$ unique to the respective transition probability matrices, together with common sequences of i.i.d.\ uniform random variables. Therefore, any pair of such pathwise-ordered Markov chains with a common transition probability matrix has the {\it first-meeting-lasts-forever property}, that is, the pathwise-ordered Markov chains run together (i.e., their trajectories coincide) forever after their first meeting time. As a result, we can assume that the first-meeting-lasts-forever property holds for pathwise-ordered continuous-time Markov chains with a common infinitesimal generator, which originate from Lemmas~\ref{lem1-continuous-ordering} and \ref{lem2-continuous-ordering} in this paper.
\end{rem}

\section{Basic lemmas}\label{appendix-basic}

This appendix presents basic lemmas, which are used in
Section~\ref{sec-bounds}.
\begin{lem}\label{prop-supermartingale}
Suppose that $\wt{\vc{Q}}$ is ergodic. If Assumption~\ref{assumpt-geo}
holds, then
\[
\{M_t:=\rme^{ct}v(\wt{X}_t,\wt{J}_t)
\dd{I}_{\{\wt{\tau}_0 > t\}};t\ge0\}
\]
is supermartingale, where $\wt{\tau}_0 = \inf\{t \ge 0:
\wt{X}_t = 0\}$.
\end{lem}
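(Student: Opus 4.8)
The plan is to reduce the supermartingale property, via the Markov property, to a monotonicity statement for the sub-stochastic semigroup of $\{(\wt{X}_t,\wt{J}_t)\}$ killed on reaching level $0$, and to derive that statement from the drift condition (\ref{ineqn-hat{Q}v}). Write $\calF_s=\sigma((\wt{X}_u,\wt{J}_u);0\le u\le s)$. Fixing $0\le s\le t$, I would split $\EE[M_t\mid\calF_s]$ on the events $\{\wt{\tau}_0\le s\}$ and $\{\wt{\tau}_0>s\}$. On the former both $M_s$ and (since $t\ge s$) $\dd{I}_{\{\wt{\tau}_0>t\}}$ vanish, so nothing is needed. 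On the latter $\wt{X}_s\ge1$, and the Markov property gives
\[
\EE[M_t\mid\calF_s]=\rme^{ct}\,(\wt{\vc{P}}_0^{(t-s)}\vc{v})(\wt{X}_s,\wt{J}_s)\,\dd{I}_{\{\wt{\tau}_0>s\}},
\]
where $\wt{\vc{P}}_0^{(r)}=(\wt{p}_0^{(r)}(k,i;\ell,j))$ is the transition function of the killed chain, $\wt{p}_0^{(r)}(k,i;\ell,j)=\PP_{(k,i)}(\wt{X}_r=\ell,\wt{J}_r=j,\wt{\tau}_0>r)$, and $(\wt{\vc{P}}_0^{(r)}\vc{v})(k,i)=\sum_{(\ell,j)\in\bbF^{>0}}\wt{p}_0^{(r)}(k,i;\ell,j)v(\ell,j)$. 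Hence $\EE[M_t\mid\calF_s]\le M_s$ is equivalent to
\[
\rme^{cr}\,(\wt{\vc{P}}_0^{(r)}\vc{v})(k,i)\le v(k,i),\qquad(k,i)\in\bbF^{>0},\ r\ge0,\qquad(\ast)
\]
taken with $r=t-s$; this also furnishes $\EE_{(k,i)}[M_t]\le v(k,i)<\infty$, so $\{M_t\}$ is integrable and adapted, as required.

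Next I would record that the drift condition yields, on the killed state space,
\[
(\wt{\vc{Q}}_0\vc{v})(k,i):=\sum_{(\ell,j)\in\bbF^{>0}}\wt{q}(k,i;\ell,j)\,v(\ell,j)\le-c\,v(k,i),\qquad(k,i)\in\bbF^{>0},\qquad(\ast\ast)
\]
where $\wt{\vc{Q}}_0$ is the restriction of $\wt{\vc{Q}}$ to $\bbF^{>0}$. Indeed, for $(k,i)\in\bbF^{>0}$ we have $1_0(k,i)=0$, so (\ref{ineqn-hat{Q}v}) reads $\sum_{(\ell,j)\in\bbF}\wt{q}(k,i;\ell,j)v(\ell,j)\le-c\,v(k,i)$, and discarding the nonnegative terms with $\ell=0$ (here $\wt{q}(k,i;0,j)\ge0$ and $v(0,j)\ge0$) leaves $(\ast\ast)$. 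Granting $(\ast\ast)$, the inequality $(\ast)$ follows by showing that $\phi_{(k,i)}(r):=\rme^{cr}(\wt{\vc{P}}_0^{(r)}\vc{v})(k,i)$ is nonincreasing: at least formally, the Kolmogorov equation $\tfrac{\rmd}{\rmd r}(\wt{\vc{P}}_0^{(r)}\vc{v})=\wt{\vc{P}}_0^{(r)}(\wt{\vc{Q}}_0\vc{v})$ together with $\wt{\vc{P}}_0^{(r)}\ge\vc{O}$ gives
\[
\phi_{(k,i)}'(r)=\rme^{cr}\bigl(\wt{\vc{P}}_0^{(r)}(c\vc{v}+\wt{\vc{Q}}_0\vc{v})\bigr)(k,i)\le0,
\]
so $\phi_{(k,i)}(r)\le\phi_{(k,i)}(0)=v(k,i)$.

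The hard part will be justifying this differentiation, since $\vc{v}$ is unbounded and the rates $|\wt{q}(k,i;k,i)|$ may be unbounded, so the Kolmogorov equation need not be differentiable termwise against $\vc{v}$. I would make the argument rigorous by a band-truncation: for each $n$ consider the chain killed at $\wt{\tau}_0\wedge\sigma_n$, where $\sigma_n=\inf\{t\ge0:\wt{X}_t>n\}$, whose transition function $\wt{\vc{P}}_{0,n}^{(r)}$ is governed by the restriction of $\wt{\vc{Q}}$ to the finite band $\{(k,i):1\le k\le n\}$. On this finite state space all rates are bounded, the Kolmogorov equation holds classically, and the restricted drift inequality still holds (for $1\le k\le n$ the dropped terms with $\ell>n$ are again nonnegative off-diagonal contributions), so the termwise computation above is valid and yields $\rme^{cr}(\wt{\vc{P}}_{0,n}^{(r)}\vc{v})(k,i)\le v(k,i)$ for $1\le k\le n$. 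Since $\wt{\vc{Q}}$ is ergodic it is regular (Remark~\ref{rem-regular}), whence $\sigma_n\uparrow\infty$ w.p.\ $1$ by (\ref{lim-t_n-wt{t}_n}); consequently $\{\wt{\tau}_0\wedge\sigma_n>r\}\uparrow\{\wt{\tau}_0>r\}$ and $\wt{p}_{0,n}^{(r)}(k,i;\ell,j)\uparrow\wt{p}_0^{(r)}(k,i;\ell,j)$. As all summands are nonnegative ($\vc{v}\ge\vc{e}\ge\vc{0}$), monotone convergence lets me pass to the limit $n\to\infty$ and recover $(\ast)$, completing the proof. An equivalent route that avoids the semigroup derivative is to apply Dynkin's formula to the minimal (non-explosive) chain with the weighted function $\rme^{cu}v$ and the localizing times $\wt{\tau}_0\wedge r\wedge\sigma_n$, bound the compensator by $0$ via $(\ast\ast)$, and let $n\to\infty$ with Fatou's lemma.
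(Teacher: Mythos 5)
Your proposal is correct and follows essentially the same route as the paper: both reduce the supermartingale inequality via the Markov property to the one-point bound $\EE_{(k,i)}[M_r]\le v(k,i)$, derive the restricted drift inequality on $\bbF^{>0}$ by discarding the nonnegative $\ell=0$ terms, make the semigroup computation rigorous by truncating to the finite band $\{1,\dots,n\}\times\bbD$ (where the killed transition function is $\exp\{\wt{\vc{Q}}^{[1,n]}t\}$ and the drift bound gives the factor $\rme^{-ct}$), and pass to the limit by monotone convergence using the regularity of the ergodic $\wt{\vc{Q}}$. The paper's proof is exactly your band-truncation argument, so no further comparison is needed.
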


\begin{proof}
Since $\{(\wt{X}_t, \wt{J}_t)\}$ is a time-homogeneous Markov chain,
it suffices to prove that $\EE_{(k,i)}[M_t] \le v(k,i)$ for all $t \ge
0$ and $(k,i)\in\bbF$. Note that if $(\wt{X}_0, \wt{J}_0) = (0,i)$
then $\wt{\tau}_0 = 0$ and thus
\[
\EE_{(0,i)}[M_t]
= 0 < 1 \le v(0,i),\qquad t \ge 0,\ i \in \bbD,
\]
where the last inequality is due to $\vc{v} \ge \vc{e}$. 

In what follows, we prove that
\begin{equation}
\EE_{(k,i)}[M_t] \le v(k,i),
\qquad t \ge 0,\ (k,i)\in\bbN \times \bbD.
\label{add-eqn-20}
\end{equation}
Let $\wt{t}_n = \inf\{t\ge0:\wt{X}_t\ge n\}$ for $n \in \bbN$. It then
follows from the ergodicity of $\wt{\vc{Q}}$ that
$\PP(\lim_{n\to\infty}\wt{t}_n = \infty) = 1$ and
$\{\wt{t}_n;n\in\bbN\}$ is a nondecreasing sequence. Thus, using the
monotone convergence theorem, we obtain
\begin{eqnarray}
\lefteqn{
\EE_{(k,i)}\left[
v(\wt{X}_t, \wt{J}_t)
\dd{I}_{\{\wt{\tau}_0 > t\}} 
\right]
}
\quad &&
\nonumber
\\
&=& \lim_{n\to\infty}
\EE_{(k,i)}\left[v(\wt{X}_t, \wt{J}_t)
\dd{I}_{\{\wt{\tau}_0 > t,\,\wt{t}_{n+1} > t\}} \right]
\nonumber
\\
&=& \lim_{n\to\infty}g_t^{[1,n]}(k,i),
\qquad t \ge 0,\ (k,i) \in \bbN \times \bbD.
\label{add-eqn-151022-04}
\end{eqnarray}
where, for $(k,i) \in \bbF^{[1,n]}:=\{1,2,\dots,n\} \times \bbD$,
\begin{equation*}
g_t^{[1,n]}(k,i)
=
\EE_{(k,i)}\left[ 
v(\wt{X}_t, \wt{J}_t)
\dd{I}_{\{\wt{\tau}_0 > t,\,\wt{t}_{n+1} > t\}} 
\right], \qquad t \ge 0.
\end{equation*}
It then follows that column vector $\vc{g}_t^{[1,n]}
:=(g_t^{[1,n]}(k,i))_{(k,i) \in \bbF^{[1,n]}}$ satisfies
\begin{equation}
\vc{g}_t^{[1,n]}
= \exp\{\wt{\vc{Q}}^{[1,n]} t\}\vc{v}^{[1,n]}, \qquad t \ge 0,
\label{add-eqn-22}
\end{equation}
where $\wt{\vc{Q}}^{[1,n]} = (\wt{q}(k,i;\ell,j))_{(k,i),(\ell,j)\in
  \bbF^{[1,n]}}$ and $\vc{v}^{[1,n]} = (v(k,i))_{(k,i) \in
  \bbF^{[1,n]}}$. Furthermore, it follows from (\ref{ineqn-hat{Q}v})
that $\wt{\vc{Q}}_+^{\leqslant n}\vc{v}^{[1,n]} \le -c
\vc{v}^{[1,n]}$. Using this inequality and (\ref{add-eqn-22}), we have
\[
\vc{g}_t^{[1,n]}
= \exp\{\wt{\vc{Q}}^{[1,n]} t\}\vc{v}^{[1,n]}
\le \rme^{-ct}\vc{v}^{[1,n]}, \qquad t \ge 0.
\]
Substituting this inequality into (\ref{add-eqn-151022-04}) yields
\[
\EE_{(k,i)}\left[
v(\wt{X}_t, \wt{J}_t)
\dd{I}_{\{\wt{\tau}_0 >   t\}} 
\right]
\le \rme^{-ct}v(k,i),
\qquad t \ge 0,\ (k,i) \in \bbN \times \bbD,
\]
which shows that (\ref{add-eqn-20}) holds.
\end{proof}

\begin{lem}\label{appen-lem-2}
If Assumption~\ref{assumpt-basic-zero} holds and $\vc{Q}$ is
irreducible, then, for all $n \in \bbN$, $t \ge 0$ and $(k,i) \in
\bbF^{\leqslant n}$,
\begin{eqnarray}
\left\|  
\presub{n}\vc{p}_n^{(t)}(k,i) - \vc{p}^{(t)}(k,i)
\right\|
&\le& 2\int_0^t
\sum_{(\ell,j)\in\bbF}\presub{n}p_n^{(u)}(k,i;\ell,j)
\nonumber
\\
&& {} \times 
\sum_{(\ell',j')\in\bbF^{>n}}|q(\ell,j;\ell',j')| \rmd u,
\label{eqn-14b}
\end{eqnarray}
where $\vc{p}^{(t)}(k,i) = (p^{(t)}(k,i;\ell,j))_{(\ell,j) \in \bbF}$
and $\presub{n}\vc{p}_n^{(t)}(k,i) =
(\presub{n}\vc{p}_n(k,i;\ell,j))_{(\ell,j) \in \bbF}$.
\end{lem}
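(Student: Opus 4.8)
The plan is to derive (\ref{eqn-14b}) from a variation\nobreakdash-of\nobreakdash-constants (Duhamel) identity that expresses the difference of the two transition semigroups through their generators, and then to read off the total\nobreakdash-variation norm of the resulting row vector by an explicit entrywise computation of $\presub{n}\vc{Q}_n - \vc{Q}$. The key identity I would establish is
\[
\presub{n}\vc{P}_n^{(t)} - \vc{P}^{(t)}
= \int_0^t \presub{n}\vc{P}_n^{(u)}\,
(\presub{n}\vc{Q}_n - \vc{Q})\,\vc{P}^{(t-u)}\,\rmd u,
\qquad t \ge 0.
\]
To see this, I would fix $t$ and differentiate $u \mapsto \presub{n}\vc{P}_n^{(u)}\vc{P}^{(t-u)}$ on $[0,t]$: applying the forward equation (\ref{eqn-FKM-DEQ}) to $\presub{n}\vc{P}_n^{(u)}$ (which is legitimate because $\presub{n}\vc{Q}_n$ is regular by Lemma~\ref{lem-(n)Q_*-regular}) and the backward equation (\ref{eqn-BKM-DEQ}) to $\vc{P}^{(t-u)}$, the product rule yields $\presub{n}\vc{P}_n^{(u)}\presub{n}\vc{Q}_n\vc{P}^{(t-u)} - \presub{n}\vc{P}_n^{(u)}\vc{Q}\,\vc{P}^{(t-u)}$; integrating from $0$ to $t$ and using $\vc{P}^{(0)}=\presub{n}\vc{P}_n^{(0)}=\vc{I}$ gives the identity.

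Next I would restrict to the $(k,i)$-th row and take the $\vc{e}$\nobreakdash-norm. Since $\vc{P}^{(t-u)}$ is substochastic, the total\nobreakdash-variation norm is non-expansive under right multiplication, i.e.\ $\|\vc{x}\vc{P}^{(t-u)}\| \le \|\vc{x}\|$, so this factor may be discarded to obtain
\[
\left\| \presub{n}\vc{p}_n^{(t)}(k,i) - \vc{p}^{(t)}(k,i) \right\|
\le \int_0^t \left\| \presub{n}\vc{p}_n^{(u)}(k,i)\,
(\presub{n}\vc{Q}_n - \vc{Q}) \right\|\,\rmd u .
\]
It then remains to evaluate the integrand. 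From (\ref{def-trunc{q}_n(k,i;l,j)}) the matrix $\presub{n}\vc{Q}_n - \vc{Q}$, restricted to any row $(\ell,j)\in\bbF^{\leqslant n}$, has its only nonzero entries in the level\nobreakdash-$n$ columns, where the entry in column $(n,j')$ equals $+\sum_{m>n}q(\ell,j;m,j')\ge 0$, and in the level\nobreakdash-$(>n)$ columns, where the entry equals $-q(\ell,j;\ell',j')\le 0$. Because $(k,i)\in\bbF^{\leqslant n}$, eq.~(\ref{eqn-(n)p_n^{(u)}=0}) shows that $\presub{n}\vc{p}_n^{(u)}(k,i)$ is supported on $\bbF^{\leqslant n}$, so only these rows contribute to the product.

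Summing absolute values column by column, each level\nobreakdash-$n$ entry of the product is nonnegative and each level\nobreakdash-$(>n)$ entry is nonpositive, so the norm splits into a positive part collected at level $n$ and the absolute negative part shed above level $n$; a short computation shows these two quantities are each equal to $\sum_{(\ell,j)\in\bbF^{\leqslant n}}\presub{n}p_n^{(u)}(k,i;\ell,j)\sum_{(\ell',j')\in\bbF^{>n}}q(\ell,j;\ell',j')$, which is the origin of the factor $2$. Finally, these off\nobreakdash-diagonal rates are nonnegative, hence equal their absolute values, and since $\presub{n}p_n^{(u)}(k,i;\cdot)$ vanishes on $\bbF^{>n}$ the outer sum may be extended to all of $\bbF$; substituting back into the integral gives exactly (\ref{eqn-14b}). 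I expect the main obstacle to be the rigorous justification of the Duhamel identity on the infinite state space with a possibly unbounded $\vc{Q}$, where the termwise product rule and the interchange of differentiation, summation and integration are not automatic. I would secure this either by invoking (\ref{eqn-FKM-DEQ})--(\ref{eqn-BKM-DEQ}) together with dominated convergence—the bound $\sum_{(m,j)}|p^{(t)}(k,i;m,j)-\chi_{(k,i)}(m,j)|\le 2t\,|q(k,i;k,i)|$ already used in the paper supplies the needed domination—or, in the paper's established style, by first proving the finite\nobreakdash-matrix version of the identity for the northwest\nobreakdash-corner truncations of $\presub{n}\vc{Q}_n$ and $\vc{Q}$, where matrix exponentials make every manipulation elementary, and then letting the truncation level tend to infinity.
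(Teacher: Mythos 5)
Your argument is correct and rests on the same two pillars as the paper's proof---the entrywise computation of $\presub{n}\vc{Q}_n-\vc{Q}$ on the rows indexed by $\bbF^{\leqslant n}$ (which is exactly where the factor $2$ comes from, as you describe) and the domination bound $1-p^{(\delta)}(\ell,j;\ell,j)\le\delta\,|q(\ell,j;\ell,j)|$ from Chung---but it is organized differently around the semigroup-difference step. You integrate first: you assert the operator-valued Duhamel identity, restrict to a row, and discard the substochastic right factor $\vc{P}^{(t-u)}$ by non-expansiveness of the total-variation norm. The paper never writes that identity. It instead sets $f_n^{(u)}(k,i)=\|\presub{n}\vc{p}_n^{(u)}(k,i)-\vc{p}^{(u)}(k,i)\|$, uses the Chapman--Kolmogorov splitting
\[
\presub{n}\vc{P}_n^{(u+\delta)}-\vc{P}^{(u+\delta)}
=\presub{n}\vc{P}_n^{(u)}\bigl(\presub{n}\vc{P}_n^{(\delta)}-\vc{P}^{(\delta)}\bigr)
+\bigl(\presub{n}\vc{P}_n^{(u)}-\vc{P}^{(u)}\bigr)\vc{P}^{(\delta)}
\]
(the discrete form of your identity), takes norms, divides by $\delta$, and passes to the limit by dominated convergence to obtain the scalar differential inequality
${\partial \over \partial u}f_n^{(u)}(k,i)\le\sum_{(\ell,j)\in\bbF^{\leqslant n}}\presub{n}p_n^{(u)}(k,i;\ell,j)\,\|\presub{n}\vc{q}_n(\ell,j)-\vc{q}(\ell,j)\|$, which it then integrates using $f_n^{(0)}=0$. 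The payoff of that arrangement is precisely the obstacle you flag at the end: because only a one-sided difference quotient of a scalar needs to be bounded from above, the interchanges of limit, sum and integral that a full Duhamel identity would require on an infinite state space with unbounded $\vc{Q}$ never have to be justified. Both of your proposed repairs are workable---the dominated-convergence route uses the same Chung estimate the paper does, while the finite-truncation route essentially reproduces the paper's strategy under another name---so your proof goes through, at the cost of somewhat more bookkeeping than the published one.
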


\proof  For $n \in \bbN$, $u \ge 0$ and $(k,i) \in
\bbF$, let $f_n^{(u)}(k,i)$ denote
\begin{eqnarray}
f_n^{(u)}(k,i) 
&=& \| \presub{n}\vc{p}_n^{(u)}(k,i) - \vc{p}^{(u)}(k,i) \|
\nonumber
\\
&=& \sum_{(\ell,j)\in\bbF} 
| \presub{n}p_n^{(u)}(k,i;\ell,j) - p^{(u)}(k,i;\ell,j) |.
\label{eqn-f_n^{(u)}(k,i)}
\end{eqnarray}
From the Chapman-Kolmogorov equation, we have
\begin{eqnarray*}
\lefteqn{
\presub{n}\vc{P}_n^{(u+\delta)} - \vc{P}^{(u+\delta)}
}
\quad &&
\nonumber
\\
&=& \presub{n}\vc{P}_n^{(u)}\presub{n}\vc{P}_n^{(\delta)}
- \vc{P}^{(u)}\vc{P}^{(\delta)}
\nonumber
\\
&=& \presub{n}\vc{P}_n^{(u)} 
\left( \presub{n}\vc{P}_n^{(\delta)} - \vc{P}^{(\delta)} \right)
+ \left( \presub{n}\vc{P}_n^{(u)} - \vc{P}^{(u)} \right)\vc{P}^{(\delta)}. 
\end{eqnarray*}
Thus, we obtain, for $\delta > 0$ and $(k,i)\in
\bbF^{\leqslant n}$,
\begin{eqnarray}
\lefteqn{
f_n^{(u+\delta)}(k,i)
}
\quad &&
\nonumber
\\
&\le& \sum_{(\ell,j)\in\bbF} \presub{n}p_n^{(u)}(k,i;\ell,j) 
\left\|
\presub{n}\vc{p}_n^{(\delta)}(\ell,j) - \vc{p}^{(\delta)}(\ell,j)
\right\|
\nonumber
\\
&& {} \qquad
+ 
\sum_{(\ell,j)\in\bbF} 
| \presub{n}p_n^{(u)}(k,i;\ell,j) - p^{(u)}(k,i;\ell,j) |
\sum_{(\ell',j')\in\bbF} 
p^{(\delta)}(\ell,j;\ell',j')
\nonumber
\\
&=& \sum_{(\ell,j)\in\bbF^{\leqslant n}}\presub{n}p_n^{(u)}(k,i;\ell,j) 
\left\| \presub{n}\vc{p}_n^{(\delta)}(\ell,j) - \vc{p}^{(\delta)}(\ell,j) 
\right\|
+ f_n^{(u)}(k,i),
\label{eqn-23}
\end{eqnarray}
where the last equality holds due to (\ref{eqn-(n)p_n^{(u)}=0}),
(\ref{eqn-f_n^{(u)}(k,i)}) and $\sum_{(\ell',j')\in\bbF}
p^{(\delta)}(\ell,j;\ell',j')=1$. Note here that
\[
\presub{n}p_n ^{(0)}(\ell,j;\ell',j') = p^{(0)}(\ell,j;\ell',j')
= \chi_{(\ell,j)}(\ell',j'),\qquad (\ell,j;\ell',j') \in \bbF^2.
\]
This equation and the triangle inequality yield, for $(\ell,j) \in \bbF$,
\begin{eqnarray}
\lefteqn{
\left\| \presub{n}\vc{p}_n^{(\delta)}(\ell,j) - \vc{p}^{(\delta)}(\ell,j) \right\|
}
\quad 
\nonumber
\\
&\le& 
\left\| \vc{p}^{(\delta)}(\ell,j) - \vc{p}^{(0)}(\ell,j) \right\| 
+ \left\| \presub{n}\vc{p}_n^{(\delta)}(\ell,j) - \presub{n}\vc{p}_n^{(0)}(\ell,j) \right\|
\nonumber
\\
&=& 2\left\{ 1 - p^{(\delta)}(\ell,j;\ell,j) \right\}
+ 2 \left\{ 1 - \presub{n}p_n^{(\delta)}(\ell,j;\ell,j) \right\}.
\label{add-eqn-151112-01}
\end{eqnarray}
It follows from Assumption~\ref{assumpt-basic-zero} and
Lemma~\ref{lem-(n)Q_*-regular} that both $\vc{Q}$ and
$\presub{n}\vc{Q}_n$ are stable and conservative and their respective
transition matrix functions are continuous.  Thus, we have
\cite[Theorem II.3.1]{Chun67}
\begin{align*}
1 - p^{(\delta)}(\ell,j;\ell,j) &\le \delta \left| q(\ell,j;\ell,j) \right|,
& (\ell,j) & \in \bbF,
\\
1 - \presub{n}p_n^{(\delta)}(\ell,j;\ell,j) &\le \delta 
\left| \presub{n}q_n(\ell,j;\ell,j) \right|,
& (\ell,j) & \in \bbF.
\end{align*}
Applying these inequalities to (\ref{add-eqn-151112-01}) yields
\begin{eqnarray*}
\left\| \presub{n}\vc{p}_n^{(\delta)}(\ell,j) - \vc{p}^{(\delta)}(\ell,j) \right\|
&\le& 2\delta \cdot \left\{ | q(\ell,j;\ell,j) | +  |\presub{n}q_n(\ell,j;\ell,j)| \right\}
\nonumber
\\
&\le& 4\delta \cdot | q(\ell,j;\ell,j) |,
\end{eqnarray*}
where the last inequality follows from
(\ref{def-trunc{q}_n(k,i;l,j)}). Note here that
\begin{align*}
\lim_{\delta \downarrow 0}
{
\presub{n}p_n^{(\delta)}(\ell,j;\ell',j') - \chi_{(\ell,j)}(\ell',j')
\over \delta
}
&= \presub{n}q_n^{(\delta)}(\ell,j;\ell',j'),
& (\ell,j;\ell',j') &\in \bbF^2,
\\
\lim_{\delta \downarrow 0}
{
p^{(\delta)}(\ell,j;\ell',j') - \chi_{(\ell,j)}(\ell',j')
\over \delta
}
&= q^{(\delta)}(\ell,j;\ell',j'),
& (\ell,j;\ell',j') &\in \bbF^2.
\end{align*}
Therefore, using the dominated convergence theorem, we obtain
\begin{equation}
\lim_{\delta \downarrow 0}
{\left\| 
\presub{n}\vc{p}_n^{(\delta)}(\ell,j) - \vc{p}^{(\delta)}(\ell,j) 
\right\| 
\over \delta
}
= \left\| \presub{n}\vc{q}_n(\ell,j) - \vc{q}(\ell,j)  \right\|,
~~~(\ell,j) \in \bbF,
\label{add-eqn-24}
\end{equation}
where $\presub{n}\vc{q}_n(k,i) =
(\presub{n}q_n(k,i;\ell,j))_{(\ell,j)\in\bbF}$ and $\vc{q}(k,i) =
(q(k,i;\ell,j))_{(\ell,j)\in\bbF}$ for $(k,i)\in\bbF$. Combining
(\ref{eqn-23}) and (\ref{add-eqn-24}), we
have
\begin{eqnarray}
{\partial \over \partial u}f_n^{(u)}(k,i)
&\le& 
\sum_{(\ell,j)\in\bbF^{\leqslant n}}\presub{n}p_n^{(u)}(k,i;\ell,j) 
\left\| \presub{n}\vc{q}_n(\ell,j) - \vc{q}(\ell,j) \right\|.
\label{eqn-28}
\end{eqnarray}
From (\ref{def-trunc{q}_n(k,i;l,j)}), we also have
\[
\left\|
\presub{n}\vc{q}_n(\ell,j) - \vc{q}(\ell,j) \right\|
= 2\sum_{(\ell',j')\in\bbF^{>n}}|q(\ell,j;\ell',j')|,
\quad (\ell,j) \in \bbF^{\leqslant n}.
\]
Substituting this into (\ref{eqn-28}) and using
(\ref{eqn-(n)p_n^{(u)}=0}) yield, for $(k,i) \in \bbF^{\leqslant n}$,
\begin{eqnarray}
{\partial \over \partial u}f_n^{(u)}(k,i)
&\le& 2
\sum_{(\ell,j)\in\bbF^{\leqslant n}}\presub{n}p_n^{(u)}(k,i;\ell,j) 
\sum_{(\ell',j')\in\bbF^{>n}}|q(\ell,j;\ell',j')|
\nonumber
\\
&=& 2
\sum_{(\ell,j)\in\bbF}\presub{n}p_n^{(u)}(k,i;\ell,j) 
\sum_{(\ell',j')\in\bbF^{>n}}|q(\ell,j;\ell',j')|.
\label{add-eqn-28}
\end{eqnarray}
Note here that $f_n^{(0)}(k,i) = \sum_{(\ell,j) \in \bbF}
|\chi_{(k,i)}(\ell,j) - \chi_{(k,i)}(\ell,j)| = 0$. As a result,
integrating both sides of (\ref{add-eqn-28}) with respect to $u$ from
$0$ to $t$, we obtain (\ref{eqn-14b}).  \qed


\section*{Acknowledgments}
The author acknowledges stimulating discussions with Shusaku Sakaiya,
which led to Remark~\ref{rem-minimum-bound}. This research was
supported in part by JSPS KAKENHI Grant Number 15K00034.


\begin{thebibliography}{00}
\bibitem{Ande91}
W.J.~Anderson,
Continuous-Time Markov Chains: An Applications-Oriented Approach,
Springer, New York, 1991.

\bibitem{Andr93}
A.L.~Andrew, K.-W.E.~Chu, P.~Lancaster,
Derivatives of eigenvalues and eigenvectors of matrix functions,
SIAM Journal on Matrix Analysis and Applications 14 (1993) 903--926.

\bibitem{Bill99}
P.~Billingsley,
Convergence of Probability Measures, second ed.,
John Wiley \& Sons, New York, 1999.

\bibitem{Bini05}
D.A.~Bini, G.~Latouche, B.~Meini,
Numerical Methods for Structured Markov Chains,
Oxford University Press, New York, 2005.

\bibitem{Brem99}
P.~Br\'{e}maud,
Markov Chains: Gibbs Fields, Monte Carlo Simulation, and Queues,
Springer, New York, 1999.

\bibitem{Chun67}
K.L.~Chung,
Markov Chains with Stationary Transition Probabilities, second ed.,
Springer, Berlin, 1967.

\bibitem{Dale68}
D.J. Daley,
Stochastically monotone Markov chains,
Probability Theory and Related Fields 10 (1968) 305--317.

\bibitem{Gibs87}
D.~Gibson, E.~Seneta,
Monotone infinite stochastic matrices and their augmented truncations,
Stochastic Processes and their Applications  24 (1987) 287--292.

\bibitem{Hart12}
A.G.~Hart, R.L.~Tweedie,
Convergence of invariant measures of truncation approximations to Markov processes,
Applied Mathematics 3 (2012) 2205--2215.

\bibitem{Herv14}
L.~Herv\'{e}, J.~Ledoux,
Approximating Markov chains and V-geometric ergodicity via weak
  perturbation theory,
Stochastic Processes and their Applications 124 (2014) 613--638.

\bibitem{Horn90}
R.A.~Horn, C.R.~Johnson,
Matrix Analysis, paperback ed.,
Cambridge University Press, Cambridge, UK, 1990.

\bibitem{King61}
J.F.C.~Kingman,
A convexity property of positive matrices,
The Quarterly Journal of Mathematics 12 (1961) 283--284.

\bibitem{Kulk10}
V.G.~Kulkarni,
Modeling and Analysis of Stochastic Systems, second ed.,
CRC Press, Boca Raton, FL, 2010.

\bibitem{Lato99}
G.~Latouche, V.~Ramaswami,
Introduction to Matrix Analytic Methods in Stochastic Modeling,
ASA-SIAM, Philadelphia, PA, 1999.

\bibitem{LiHai00}
H.~Li, Y.Q.~Zhao,
Stochastic block-monotonicity in the approximation of the stationary
  distribution of infinite Markov chains, 
Stochastic Models 16 (2000) 313--333.

\bibitem{LiuYuan10}
Y.~Liu,
Augmented truncation approximations of discrete-time Markov chains,
Operations Research Letters 38 (2010) 218--222.

\bibitem{Luca91}
D.M.~Lucantoni,
New results on the single server queue with a batch Markovian
  arrival process,
Stochastic Models 7 (1991) 1--46.

\bibitem{Lund96}
R.B.~Lund, S.P.~Meyn, and R.L.~Tweedie,
Computable exponential convergence rates for stochastically ordered Markov 
processes, The Annals of Applied Probability 6 (1996) 218--237.

\bibitem{Masu15-ADV}
H.~Masuyama,
Error bounds for augmented truncations of discrete-time
  block-monotone Markov chains under geometric drift conditions,
Advances in Applied Probability 47 (2015) 83--105.

\bibitem{Masu16-SIAM}
H.~Masuyama,
Error bounds for augmented truncations of discrete-time
  block-monotone Markov chains under subgeometric drift conditions,
SIAM Journal on Matrix Analysis and Applications 37 (2016) 877--910. 

\bibitem{Meyn09}
S.~Meyn, R.L.~Tweedie,
Markov Chains and Stochastic Stability, second ed.,
Cambridge University Press, Cambridge, UK, 2009.

\bibitem{Mull02}
A.~M\"uller, D.~Stoyan,
Comparison Methods for Stochastic Models and Risks, 
John Wiley \& Sons, Chichester, UK, 2002.

\bibitem{Neut89}
M.F.~Neuts,
Structured Stochastic Matrices of M/G/1 Type and Their Applications, 
Marcel Dekker, New York, 1989.

\bibitem{Tijm03}
H.C.~Tijms,
A First Course in Stochastic Models,
John Wiley \& Sons, Chichester, UK, 2003.

\bibitem{Twee98}
R.L.~Tweedie,
Truncation approximations of invariant measures for Markov chains,
Journal of Applied Probability 35 (1998) 517--536.

\bibitem{Will91}
D.~Williams,
Probability with Martingales,
Cambridge University Press, Cambridge, UK, 1991.

\bibitem{Zeif14b}
A.~Zeifman, V.~Korolev, Y.~Satin, A.~Korotysheva, V.~Bening,
Perturbation bounds and truncations for a class of Markovian
  queues, Queueing Systems 76 (2014) 205--221.

\bibitem{Zeif12}
A.~Zeifman, A.~Korotysheva,
Perturbation bounds for {${\rm M}_t/{\rm M}_t/N$} queue with
  catastrophes,
Stochastic Models, 28 (2012) 49--62.

\bibitem{Zeif14a}
A.~Zeifman, Y.~Satin, V.~Korolev, S.~Shorgin,
On truncations for weakly ergodic inhomogeneous birth and death
  processes, 
International Journal of Applied Mathematics and Computer Science 
24 (2014) 503--518.
\end{thebibliography}


\end{document}